\providecommand{\U}[1]{\protect\rule{.1in}{.1in}}
\providecommand{\U}[1]{\protect\rule{.1in}{.1in}}
\providecommand{\U}[1]{\protect\rule{.1in}{.1in}}
\providecommand{\U}[1]{\protect\rule{.1in}{.1in}}
\newtheorem{theorem}{Theorem}[section]
\newtheorem{corollary}[theorem]{Corollary}
\newtheorem{lemma}{Lemma}[section]
\newtheorem{proposition}[theorem]{Proposition}
\theoremstyle{definition}
\newtheorem{definition}{Definition}[section]
\newtheorem{remark}{Remark}[section]
\numberwithin{equation}{section}
\theoremstyle{example}
\numberwithin{equation}{section}
\begin{document}

\title{Convergence analysis of a variable metric forward-backward splitting algorithm with applications}

\author{ Fuying Cui$^{a}$, Yuchao Tang$^{a}$, Chuanxi Zhu$^{a}$
\\
{\small ${^a}$ Department of Mathematics, Nanchang University,  Nanchang 330031, P.R. China }}

 \date{}

\maketitle

{}\textbf{Abstract.}\
The forward-backward splitting algorithm is a popular operator-splitting method for solving monotone inclusion of
 the sum of a maximal monotone operator and a cocoercive operator. In this paper, we present a new convergence analysis of a variable metric forward-backward splitting algorithm with extended relaxation parameters in real Hilbert spaces.
We prove that this algorithm is weakly convergent when certain weak conditions are imposed upon the relaxation parameters. Consequently, we recover the forward-backward splitting algorithm with variable step sizes. As an application, we obtain a variable metric forward-backward splitting algorithm for solving the minimization problem of the sum of two convex functions, where one of them is differentiable with a Lipschitz continuous gradient.
 Furthermore, we discuss the applications of this algorithm to the fundamental of the variational inequalities problem, constrained convex minimization problem, and split feasibility problem. Numerical experimental results on LASSO problem in statistical learning demonstrate the effectiveness of the proposed iterative algorithm.

\textbf{Key words}: Forward-backward splitting algorithm; Monotone inclusion; Variable metric; Split feasibility problem.

\textbf{AMS Subject Classification}: 90C25; 47H05.

\section{Introduction}
\label{sec-introduction}

Let $H$ be a real Hilbert space. The forward-backward splitting algorithm is a classical operator-splitting algorithm, which solves the monotone inclusion problem,
\begin{equation}\label{two-monotone-inclusion}
\textrm{find}\, x\in H,\quad \textrm{such that}\, 0\in Ax+Bx,
\end{equation}
where $A:H\rightarrow 2^{H}$ is a maximal monotone operator and $B:H\rightarrow H$ is a $\beta$-cocoercive operator, for some $\beta> 0$. The forward-backward splitting algorithm, which dates back to the original work of Lions and Mercier \cite{lionsandmercier1979}, has been studied and reported extensively in the literature, for example \cite{Chenhg1997,Tseng2000SIAM,Combettes2004Optimization,Lopez2012AAA,Zong2018}. The emergence of compressive sensing theory and large-scale optimization problems associated with signal and image processing has resulted in the forward-backward splitting algorithm receiving much attention in recent years.
A forward-backward splitting algorithm with relaxation and errors in Hilbert spaces was proposed by Combettes \cite{Combettes2004Optimization}. More precisely, let $x_{0}\in H$, set
\begin{equation}\label{relax-forward-backward}
x_{k+1}=x_{k}+\lambda_{k}(J_{\gamma_{k} A}(x_{k}-\gamma_{k}(Bx_{k}+b_{k}))+a_{k}-x_{k}),\quad k\geq 0,
\end{equation}
where $\{\gamma_{k}\}\subset (0,2\beta)$, $\{\lambda_{k}\}\subset (0,1]$, $\{a_{k}\}$ and $\{b_{k}\}$ are absolutely summable sequences in $H$. In addition, $J_{\gamma _{k}A}:=(I+\gamma_{k}A)^{-1}$ denotes the resolvent of operator $A $ with index $\gamma_{k}>0$. Combettes \cite{Combettes2004Optimization} proved the convergence of the iterative scheme (\ref{relax-forward-backward}) when certain conditions are imposed upon the parameters.  Jiao and Wang \cite{Wangfh2014JAM} generalized the iterative scheme (\ref{relax-forward-backward}) by
extending the work of Combettes \cite{Combettes2004Optimization}. They proved the convergence of (\ref{relax-forward-backward}) by requiring the parameters $\{\lambda_{k}\}$ to comply with the requirement $\{\lambda_k\} \subset \left(0,\frac{4\beta}{2\beta+\gamma_k}\right)$ when $b_k =0$. It is easy to see that $\frac{4\beta}{2\beta+\gamma_k}$ is strictly larger than one when $\{\gamma_{k}\}\subset (0,2\beta)$. Further, Combettes and Yamada \cite{Combettes2015JMAA} improved the range of the relaxation parameters $\{\lambda_k\}$ in (\ref{relax-forward-backward}) to $(0,\frac{4\beta -\gamma_k}{2\beta})$. After a simple calculation, we know that $\frac{4\beta -\gamma_k}{2\beta} > \frac{4\beta}{2\beta+\gamma_k}$. Therefore, the range of $\{\lambda_k\}$ in the work of Combettes and Yamada \cite{Combettes2015JMAA} is larger than that of Jiao and Wang \cite{Wangfh2014JAM}.

  In the case when $\gamma_k = \gamma$ and $a_{k}=b_{k}=0$, the iterative scheme (\ref{relax-forward-backward}) is reduced to the forward-backward splitting algorithm with a constant step size \cite{bauschkebook2017},
\begin{equation}\label{relax-forward-backward-constant}
x_{k+1}=x_{k}+ \lambda_{k}(J_{\gamma A}(x_{k}-\gamma Bx_{k})-x_{k}),\quad k\geq 0,
\end{equation}
where $\gamma \in (0,2\beta)$ and $\{\lambda_k\} \subset (0, \frac{4 \beta-\gamma}{2 \beta})$.
Bauschke and Combettes \cite{bauschkebook2017} obtained the convergence of the iterative algorithm (\ref{relax-forward-backward-constant}) by adopting the Krasnoseki\u{i}-Mann iteration for computing the fixed points of nonexpansive operators.
 The forward-backward splitting algorithm with constant step size (\ref{relax-forward-backward-constant}) is usually considered to be stationary, whereas the forward-backward splitting algorithm with variable step sizes (\ref{relax-forward-backward}) is referred to as non-stationary.

It is worth mentioning that by letting $\lambda_{k}=1$, then $(\ref{relax-forward-backward-constant})$ reduces to the classical forward-backward splitting algorithm. More precisely, the iterative sequence $\{x_k\}$ is defined by
\begin{equation}\label{classical-forward-backward}
x_{k+1}=J_{\gamma A}(x_{k}-\gamma Bx_{k}), \quad  k\geq 0.
\end{equation}
In the context of convex optimization, the forward-backward splitting algorithm is equivalent to the so-called proximal gradient algorithm (PGA) applied to solve the following convex minimization problem,
\begin{equation}\label{sum-two-convex}
\min_{x\in H} f(x)+g(x),
\end{equation}
where $f:H\rightarrow R$ is convex, differentiable with an $L$-Lipschitz continuous gradient for some $L>0$ and $g:H\rightarrow (-\infty,+\infty]$ is a proper, lower semicontinuous, convex function. The convex optimization problem (\ref{sum-two-convex}) has found widespread application in signal and image processing, for example \cite{beck2009,beckandteboulle2009,Caijf2010SIAM}. As a consequence of \cite{Combettes2004Optimization}, Combettes and Wajs \cite{combettes2005} employed the forward-backward splitting algorithm (\ref{relax-forward-backward}) to solve the minimization problem (\ref{sum-two-convex}). The obtained iterative algorithm is defined as
\begin{equation}\label{forward-backward}
x_{k+1}=x_{k}+\lambda_{k}(prox_{\gamma_ {k} g}(x_{k}-\gamma_{k}(\nabla f(x_{k})+b_{k}))+a_{k}-x_{k}),\quad k\geq 0,
\end{equation}
where $\{\gamma_{k}\}\subset (0,2/L)$, $\{\lambda_{k}\}\subset (0,1]$, and $\{a_{k}\}$, $\{b_{k}\}$ are absolutely summable sequences in $H$. $prox_{\gamma g}$ denotes the proximity operator of $g$ with index $
\gamma >0$.
In addition, Combettes and Wajs \cite{combettes2005} presented applications of this algorithm to many concrete convex optimization problems. This iterative algorithm (\ref{forward-backward}) was subsequently improved by Combettes and Yamada \cite{Combettes2015JMAA} who extended the range of the relaxation parameters $\{\lambda_k\}$.

Inspired by solving large-scale convex optimization problems arising in image processing, machine learning, and economic management, many efficient primal-dual splitting algorithms have been proposed for structured monotone inclusions involving maximal monotone operators and single-valued Lipschitz or cocoercive monotone operators, for example \cite{combettes2012,vu2013ACM}. Although these monotone inclusions are more complicated than the monotone inclusion problem (\ref{two-monotone-inclusion}), they can be transformed into the form of this problem in a suitable product space. Therefore, it is natural to consider using the forward-backward splitting algorithm (e.g., (\ref{relax-forward-backward}) or (\ref{relax-forward-backward-constant})) to solve the equivalent monotone inclusion problem. Because the backward steps cannot be decomposed, direct use of the forward-backward splitting algorithm often fails to obtain a completely splitting algorithm.
Many researchers attempted to overcome this difficulty by investigating variable metric operator splitting algorithms. The use of a suitable variable metric enables the implicit step of backward splitting to be easily decomposed. For example, the primal-dual hybrid gradient algorithm \cite{Esser2010} (also known as the primal-dual of the Chambolle-Pock algorithm \cite{chambolleandpock2011}) is equivalent to the variable metric proximal point algorithm \cite{Burke1998SIAM,Parente2008SIAM}. We refer the readers to a subsequent paper \cite{he2012} for more details. V\~{u} \cite{Vu2013NFAO} proposed a variable metric extension of the forward-backward-forward splitting algorithm \cite{Tseng2000SIAM} for solving monotone inclusion of the sum of
a maximal monotone operator and a monotone Lipschitzian operator in Hilbert spaces. Liang \cite{Liangj2016Diss} proposed a variable metric multi-step inertial operator-splitting algorithm for solving the monotone inclusion problem (\ref{two-monotone-inclusion}). Bonettini et al. \cite{Bonettini2016SIAMJSC} developed a scaled inertial forward-backward splitting algorithm for solving (\ref{two-monotone-inclusion}) in the context of convex minimization. Neither of the respective algorithms in the work by Liang \cite{Liangj2016Diss} and Bonettini et al. \cite{Bonettini2016SIAMJSC} was compatible with the relaxation strategy.
The variable metric forward-backward splitting algorithm was originally studied in finite-dimensional Hilbert spaces \cite{Chenhg1997,Lotito2009JCA}; however, the methods in these studies either had to be strongly monotone to study the convergence rate or they did not make use of the cocoercive property of $B$ in (\ref{two-monotone-inclusion}). For infinite-dimensional Hilbert spaces, Combettes and V\~{u} \cite{Combettes2014Optimization} proposed a variable metric forward-backward splitting algorithm to solve (\ref{two-monotone-inclusion}) and analyzed its weak and strong convergence. This algorithm is defined as follows. Let $x_{0}\in H $, and set
\begin{equation}\label{variable-forward-backward}
\left\{
\begin{aligned}
y_{k}   & =  x_{k}-\gamma_{k}U_{k}(Bx_{k}+b_{k}), \\
x_{k+1} & =  x_{k}+\lambda_{k}(J_ {\gamma_{k}U_{k}A}(y_{k})+a_{k}-x_{k}).
\end{aligned}
\right.
\end{equation}
where $\{U_{k}\}\subset \mathcal{P}_{\alpha}(H)$, $\{\lambda_{k}\}\subset (0,1]$, $\{\gamma_{k}\}\subset (0,2\beta)$, $ \{a_{k}\}$ and $\{b_{k}\} $ are absolutely summable sequences in $H$. This algorithm (\ref{variable-forward-backward}) includes a variable metric, variable step sizes, relaxation parameter, and errors. It includes nearly all of the forward-backward type of splitting algorithms mentioned above. For example, by
letting $U_{k}=I $ in (\ref{variable-forward-backward}), it is reduced to (\ref{relax-forward-backward}).
The relaxation parameters $\{\lambda_{k}\}$ in (\ref{relax-forward-backward}) are observed to be strictly larger than that based on the work of Combettes and Yamada \cite{Combettes2015JMAA}. While preparing this manuscript, we discovered that in Chapter 5 of the dissertation \cite{Simoes2017D}, Sim\~{o}es generalized the variable metric forward-backward splitting algorithm by replacing the relaxation parameters $\{\lambda_k\}$ in (\ref{variable-forward-backward}) with self-adjoint, strong positive linear operators. However, this approach still requires the maximum eigenvalue of the operators to be smaller than one.

The purpose of this paper is to introduce a new convergence analysis for the variable metric forward-backward splitting algorithm (\ref{variable-forward-backward}) with an extended range of relaxation parameters.
We prove the weak convergence of the variable metric forward-backward splitting algorithm by setting the relaxation parameter $\{\lambda_{k}\}$ larger than one in real Hilbert spaces. To achieve this goal, we make full use of the averaged and firmly nonexpansive property of operators $J_{\gamma_k U_kA}(I-\gamma_k U_kB)$ and $J_{\gamma_k U_kA}$, where $\lambda_k >0$ and $U_k\in \mathcal{P}_{\alpha}(H)$. In contrast, existing solutions mainly rely on $J_{\gamma_k U_k A}$ being firmly nonexpansive. Consequently, we obtain the convergence of the forward-backward splitting algorithm with variable step sizes. Moreover, we impose a slightly weak condition on the relaxation parameters to ensure the convergence of this algorithm. The results we obtained complement and extend those of Combettes and Yamada \cite{Combettes2015JMAA}.
 As an application, we obtain the variable metric forward-backward splitting algorithm for solving the minimization problem (\ref{sum-two-convex}).
 We also present the application of this algorithm to the variational inequalities problem, constrained convex minimization problem, and split feasibility problem. To the best of our knowledge, the iterative algorithms we obtained are the most general ones for solving these problems. Finally, we conduct numerical experiments on LASSO problem to validate the effectiveness of the proposed iterative algorithm.

The remainder of this paper is organized as follows. Section $2$ reviews selected notations and lemmas on monotone operator theory and presents some technical lemmas. In Section $3$,
we prove the main convergence results of the variable metric forward-backward splitting algorithm with relaxation in real Hilbert spaces. Consequently, we obtain several corollaries of some special cases. Section $4$ presents our use of the proposed iterative algorithm to solve three typical optimization problems include the variational inequalities problem, constrained convex minimization problem, and split feasibility problem. In Section $5$, we present preliminary numerical results on LASSO problem to illustrate the performance of the proposed iterative algorithm.
Finally, we provide our conclusions.

\section{Preliminaries}
\label{sec:pre}
In this section, we recall selected concepts and lemmas that are commonly used in the context of
convex analysis and monotone operator theory. Most of them can be found in \cite{bauschkebook2011,bauschkebook2017}. Throughout this paper, let $H$ be a real Hilbert space. The inner product and the associated norm of Hilbert space $H$ are denoted by $\langle\cdot,\cdot\rangle$ and $\|\cdot\|$, respectively. $I$ denotes the identity operator and the symbols $\rightharpoonup$ and $\rightarrow$ denote weak and strong convergence.

We first recall selected basic notations and definitions. Let $A:H\rightarrow 2^H$ be a set-valued operator. We denote its domain, range, graph, and zeros by dom $A= \{ x\in H| Ax \neq \emptyset \}$, ran $A = \{ u\in H | (\exists x\in H) u\in Ax\}$,  gra $A = \{ (x,u)\in H\times H | u\in Ax \}$, and zer $A = \{x\in H  | 0\in Ax\}$, respectively.

\begin{definition}
Let $A:H\rightarrow 2^H$ be a set-valued operator. $A$ is said to be monotone, if
$$
\langle x-y, u-v \rangle \geq 0, \quad \forall (x,u), (y,v)\in \textrm{gra } A.
$$
Moreover, $A$ is said to be maximal monotone, if its graph is not strictly contained in the graph of any other monotone operator on $H$.
\end{definition}
A well-known example of a maximal monotone operator is the subgradient mapping of a proper, lower semicontinuous convex function $f:H \rightarrow (-\infty, +\infty]$ defined by
$$
\partial f: H\rightarrow 2^H : x \mapsto \{ u\in H | f(y) \geq f(x) + \langle u, y-x\rangle, \forall y\in H \}.
$$

\begin{definition}
Let $A:H\rightarrow 2^H$ be a maximal monotone operator. The resolvent operator of $A$ with index $\lambda >0$ is defined as
$$
J_{\lambda A} = (I+\lambda A)^{-1}.
$$
\end{definition}

According to the Minty theorem, the resolvent operator $J_{\lambda A}$ is defined everywhere on Hilbert space $H$, and $J_{\lambda A}$ is firmly nonexpansive.

Let us recall the definition of the proximity operator, which was first introduced by Moreau \cite{Moreau1962}. Let $f\in \Gamma_{0}(H)$, where $\Gamma_{0}(H)$ denotes the set of all proper lower semicontinuous convex functions $f:H\rightarrow (-\infty, +\infty]$. The proximity operator of $f$ with index $\lambda >0$ is defined by
$$
prox_{\lambda f} : H\rightarrow H: x \mapsto \arg\min_{y\in H} \frac{1}{2}\| y-x \|^2 + \lambda f(y).
$$
In fact, the resolvent operator of the subdifferential operator of any $f\in \Gamma_{0}(H)$ with index $\lambda >0$ is the proximal operator of $f$ with index $\lambda >0$, that is
$$
prox_{\lambda f} = (I+\lambda \partial f)^{-1}.
$$
Therefore, the proximity operators have the same property as the resolvent operators.

\begin{definition}
Let $B:H\rightarrow H$ be a single-valued operator. Let $\beta >0$, then $B$ is said to be $\beta$-cocoercive, if
$$
\langle x-y, Bx-By \rangle \geq \beta \| Bx-By \|^2, \quad \forall x,y\in H.
$$
\end{definition}

The $\beta$-cocoercive operator is also known as a $\beta$-inverse strongly monotone operator ($\beta$-ism), for example \cite{Byrne2004}. It is easy to see from the above definition that a $\beta$-cocoercive operator is $\frac{1}{\beta}$-Lipschitz continuous, i.e., $\|Bx-By\| \leq \frac{1}{\beta}\|x-y\| $.

Next, we recall the definitions of nonexpansive and related mappings. These mappings often appear in the convergence analysis of optimization algorithms.

\begin{definition}
Let $C$ be a nonempty subset of $H$. Let $T:C\rightarrow H$, then

\noindent \emph{(i)} $T$ is considered to be nonexpansive, if
$$
\|Tx-Ty\| \leq \|x-y\|, \quad \forall x,y\in C.
$$

\noindent \emph{(ii)} $T$ is considered to be firmly nonexpansive, if
$$
\|Tx-Ty\|^2 \leq \|x-y\|^2 - \| (I-T)x- (I-T)y \|^2, \quad \forall x,y\in C.
$$

\noindent \emph{(iii)} $T$ is referred to as $\alpha$-averaged, $\alpha\in (0,1)$, if there exists a nonexpansive mapping $S$ such that $T = (1-\alpha)I + \alpha S$.

\end{definition}

It follows immediately that a firmly nonexpansive mapping is a nonexpansive mapping and an $\alpha$-averaged mapping is also nonexpansive.

We denote by $Fix(T)$ the set of fixed pints of a mapping $T$, that is $Fix(T) = \{x\in H | x = Tx\}$.

\begin{lemma}(Demiclosedness Principle)
Let $C$ be a nonempty subset of $H$. Let $T:C\rightarrow H$ be a nonexpansive mapping with $Fix(T)\neq \emptyset$. If $\{x_k\}$ is a sequence in $C$ that converges weakly to $x$ and if $\{(I-T)x_k\}$ converges strongly to $y$, then $(I-T)x=y$; in particular, if $y=0$, then $x\in Fix(T)$.

\end{lemma}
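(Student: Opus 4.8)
The plan is to argue directly via an Opial-type inequality; this keeps the proof elementary and, unlike the route through maximal monotonicity of $I-T$, imposes no extra structure on the subset $C$. I will freely use that a weakly convergent sequence is bounded (so that $\liminf_k\|x_k-x\|$ is finite) and that $x$ lies in the domain of $T$, which is implicit in the statement and holds in particular when $C$ is closed and convex, hence weakly sequentially closed, so that $Tx$ is well defined.

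The first step is to establish the Opial inequality: if $x_k\rightharpoonup x$, then for every $w\in H$ with $w\neq x$,
\begin{equation*}
\liminf_{k\to\infty}\|x_k-x\| < \liminf_{k\to\infty}\|x_k-w\|.
\end{equation*}
This follows by expanding
\begin{equation*}
\|x_k-w\|^2 = \|x_k-x\|^2 + 2\langle x_k-x,\,x-w\rangle + \|x-w\|^2,
\end{equation*}
noting that $\langle x_k-x,\,x-w\rangle\to 0$ since $x_k\rightharpoonup x$, and then taking square roots.

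The second step is to test this inequality against the natural fixed-point candidate $w:=Tx+y$. By the triangle inequality, nonexpansiveness of $T$, and the hypothesis $(I-T)x_k\to y$,
\begin{equation*}
\|x_k-(Tx+y)\| \le \|(I-T)x_k-y\| + \|Tx_k-Tx\| \le \|(I-T)x_k-y\| + \|x_k-x\| ,
\end{equation*}
and since the first summand tends to $0$, this yields $\liminf_k\|x_k-(Tx+y)\|\le\liminf_k\|x_k-x\|$. If $Tx+y\neq x$ this contradicts the Opial inequality, so $Tx+y=x$, that is $(I-T)x=y$. In particular, when $y=0$ we obtain $Tx=x$, i.e.\ $x\in Fix(T)$.

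I do not anticipate a real obstacle here, since this is a classical fact; the only points requiring a little care are the justification of the Opial inequality (in particular the finiteness of $\liminf_k\|x_k-x\|$, which rests on boundedness of weakly convergent sequences) and keeping the $\liminf$ manipulations honest. Note also that $Fix(T)\neq\emptyset$ is not actually used in the argument; it serves only to make the final clause non-vacuous.
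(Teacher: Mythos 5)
The paper states this lemma without proof, as a classical fact imported from the standard references (it is Theorem 4.17 / Corollary 4.18 in Bauschke--Combettes), so there is no in-paper argument to compare against. Your Opial-inequality proof is correct and complete: the expansion $\|x_k-w\|^2=\|x_k-x\|^2+2\langle x_k-x,x-w\rangle+\|x-w\|^2$ together with boundedness of the weakly convergent sequence gives $\liminf_k\|x_k-w\|^2=\liminf_k\|x_k-x\|^2+\|x-w\|^2$, hence the strict Opial inequality for $w\neq x$; and testing against $w=Tx+y$ via $\|x_k-(Tx+y)\|\le\|(I-T)x_k-y\|+\|x_k-x\|$ forces $Tx+y=x$. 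This is one of the two standard routes (the other, used in Bauschke--Combettes, runs the firm estimate on $\|Tx_k-Tx\|^2$ directly and avoids invoking Opial by name); they are essentially equivalent in content. The one genuine caveat is the one you already flag: for $Tx$ to be defined you need $x\in C$, which does not follow from the lemma as stated here (the paper only assumes $C$ nonempty), but does follow under the weak sequential closedness of $C$ assumed in the standard formulation; in this paper the lemma is only ever applied with $C=H$, so nothing is lost. You are also right that $Fix(T)\neq\emptyset$ plays no role in the argument.
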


The following proposition provides some equivalent definitions of the firmly nonexpansive mappings. This proposition can be found in Proposition 4.2 of \cite{bauschkebook2011}.

\begin{proposition}\label{prop1}
Let $C$ be a nonempty subset of $H$. Let $T:C\rightarrow H$, then the following are equivalent

\noindent \emph{(i)} $T$ is firmly nonexpansive;

\noindent \emph{(ii)} $I-T$ is firmly nonexpansive;

\noindent \emph{(iii)} $2T-I$ is nonexpansive;

\noindent \emph{(iv)} $\langle x-y, Tx-Ty \rangle \geq \|Tx-Ty\|^2, \quad \forall x,y\in C$;

\end{proposition}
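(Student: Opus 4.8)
The plan is to reduce all four assertions to assertion (iv) by a single change of variables together with the polarization identity in $H$. Fix arbitrary $x,y\in C$ and write $u=x-y$ and $v=Tx-Ty$, so that $(I-T)x-(I-T)y=u-v$ and $(2T-I)x-(2T-I)y=2v-u$. In this notation, (i) reads $\|v\|^2\le\|u\|^2-\|u-v\|^2$, (ii) reads $\|u-v\|^2\le\|u\|^2-\|v\|^2$, (iii) reads $\|2v-u\|^2\le\|u\|^2$, and (iv) reads $\langle u,v\rangle\ge\|v\|^2$. So the whole proposition is a statement about three vectors $u$, $v$, $u-v$ (equivalently $2v-u$) in $H$.

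Next I would expand the squared norms via $\|u-v\|^2=\|u\|^2-2\langle u,v\rangle+\|v\|^2$ and $\|2v-u\|^2=4\|v\|^2-4\langle u,v\rangle+\|u\|^2$. Substituting the first of these into (i) and into (ii) shows that each of them is equivalent to $2\langle u,v\rangle\ge 2\|v\|^2$; substituting the second into (iii) shows that (iii) is equivalent to $4\langle u,v\rangle\ge 4\|v\|^2$. In all three cases, after cancelling the positive constant, the condition becomes exactly $\langle u,v\rangle\ge\|v\|^2$, i.e. (iv). Since $x$ and $y$ are arbitrary, this simultaneously proves (i)$\Leftrightarrow$(iv), (ii)$\Leftrightarrow$(iv) and (iii)$\Leftrightarrow$(iv), and hence the equivalence of (i)--(iv). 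One could instead organise the argument as a cycle (i)$\Rightarrow$(iii)$\Rightarrow$(ii)$\Rightarrow$(i), but routing everything through (iv) keeps each step a one-line computation.

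I do not expect a genuine obstacle here: the entire content is the Hilbert-space identity $\|a-b\|^2=\|a\|^2-2\langle a,b\rangle+\|b\|^2$, and once the substitution $u=x-y$, $v=Tx-Ty$ is in place the algebra is mechanical. The only points that need a little care are the signs in the expansion of $\|2v-u\|^2$ and the observation that dividing an inequality by the positive constant $2$ or $4$ preserves its direction, so that the differing numerical factors in the three reductions are immaterial. It is also worth stating explicitly at the start that firm nonexpansiveness and nonexpansiveness are quantified over all pairs of points of $C$, so that the pointwise equivalences obtained for fixed $x,y$ upgrade without further work to the equivalence of the four global properties.
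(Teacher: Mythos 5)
Your proof is correct: with $u=x-y$, $v=Tx-Ty$, each of (i), (ii), (iii) expands via $\|a-b\|^2=\|a\|^2-2\langle a,b\rangle+\|b\|^2$ to $\langle u,v\rangle\ge\|v\|^2$, which is (iv). The paper itself gives no proof of this proposition (it only cites Proposition 4.2 of Bauschke and Combettes), and your reduction of all four statements to (iv) by this polarization computation is precisely the standard argument used in that reference, so there is nothing to add.
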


From Proposition \ref{prop1} (iii) and (iv), we know that if $T$ is firmly nonexpansive, then $T$ is $\frac{1}{2}$-averaged, and a $1$-cocoercive operator is firmly nonexpansive.

The following proposition is taken from Proposition 4.25 of \cite{bauschkebook2011}.

\begin{proposition}\label{prop2}
Let $C$ be a nonempty subset of $H$. Let $T:C\rightarrow H$, then $T$ is $\alpha$-averaged if and only if
$$
\|Tx-Ty\|^2 \leq \|x-y\|^2 - \frac{1-\alpha}{\alpha}\|(I-T)x-(I-T)y\|, \quad \forall x,y\in C.
$$.

\end{proposition}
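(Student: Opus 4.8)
The plan is to reduce the claim to the elementary (signed) convex‑combination identity in a Hilbert space,
$$
\|\mu a+(1-\mu)b\|^{2}=\mu\|a\|^{2}+(1-\mu)\|b\|^{2}-\mu(1-\mu)\|a-b\|^{2},
$$
which holds for \emph{every} real scalar $\mu$, and then to unwind the definition of an $\alpha$-averaged mapping. The point is that the inequality in the statement is nothing but the nonexpansiveness of the mapping $S$ underlying the averaged decomposition of $T$, once $\|Sx-Sy\|^{2}$ is expanded by the identity above.

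First I would rewrite the averagedness condition in operator form: for $\alpha\in(0,1)$, the mapping $T$ is $\alpha$-averaged if and only if $S:=I+\tfrac1\alpha(T-I)=\bigl(1-\tfrac1\alpha\bigr)I+\tfrac1\alpha T$ is nonexpansive, because the relation $T=(1-\alpha)I+\alpha S$ holds precisely when $S$ is given by this formula. Thus the whole statement becomes: $S$ is nonexpansive $\iff$ the displayed inequality holds for all $x,y\in C$. Next, for arbitrary $x,y\in C$, write $u:=x-y$ and $v:=Tx-Ty$ and apply the identity with $\mu=\tfrac1\alpha$, $a=v$, $b=u$; after simplifying the coefficient $-\tfrac1\alpha\bigl(1-\tfrac1\alpha\bigr)=\tfrac{1-\alpha}{\alpha^{2}}$ this yields
$$
\|Sx-Sy\|^{2}=\tfrac1\alpha\|v\|^{2}+\bigl(1-\tfrac1\alpha\bigr)\|u\|^{2}+\tfrac{1-\alpha}{\alpha^{2}}\,\|u-v\|^{2},
$$
and since $u-v=(I-T)x-(I-T)y$ we have $\|u-v\|^{2}=\|(I-T)x-(I-T)y\|^{2}$.

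Finally, the nonexpansiveness inequality $\|Sx-Sy\|^{2}\le\|u\|^{2}$ becomes, after subtracting $\bigl(1-\tfrac1\alpha\bigr)\|u\|^{2}$ from both sides and multiplying through by the positive constant $\alpha$,
$$
\|Tx-Ty\|^{2}\le\|x-y\|^{2}-\tfrac{1-\alpha}{\alpha}\,\|(I-T)x-(I-T)y\|^{2},
$$
which is exactly the asserted inequality. Because every step in this chain is reversible — the expansion is an identity, and we only subtracted a common term and scaled by $\alpha>0$ — the two conditions are equivalent, which finishes the proof.

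I do not expect any genuine obstacle here; the proof is a short computation. The only points requiring care are that $\mu=\tfrac1\alpha>1$ when $\alpha\in(0,1)$, so one must invoke the parallelogram-type identity in the form valid for all real $\mu$ rather than the usual convex-combination version, and that one must track the signs of the coefficients $1-\tfrac1\alpha<0$ and $\tfrac{1-\alpha}{\alpha^{2}}>0$ when rearranging; (note also that the norm in the last term of the stated inequality should be squared).
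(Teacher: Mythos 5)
Your proof is correct. The paper does not actually prove this proposition --- it simply cites Proposition 4.25 of Bauschke and Combettes --- and your argument is precisely the standard one from that reference: reduce $\alpha$-averagedness to nonexpansiveness of $S=\bigl(1-\tfrac1\alpha\bigr)I+\tfrac1\alpha T$ and expand $\|Sx-Sy\|^{2}$ via the signed convex-combination identity (the paper's Lemma~2.4, valid for all real scalars), with all the coefficient signs handled correctly. You are also right that the final norm in the stated inequality should be squared; that is a typo in the paper.
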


The following lemma provides a relation between an operator $T$ with its complement $I-T$.

\begin{lemma}\label{lemma-complement}
Let $C$ be a nonempty subset of $H$. Let $T:C\rightarrow H$, then

\noindent \emph{(i)} $T$ is nonexpansive if and only if the complement $I-T$ is $\frac{1}{2}$-cocoercive;

\noindent \emph{(ii)} $T$ is $\alpha$-averaged if and only if the complement $I-T$ is $\frac{1}{2\alpha}$-cocoercive.

\end{lemma}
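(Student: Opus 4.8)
The plan is to extract both equivalences from a single elementary identity. Writing $R:=I-T$, for all $x,y\in C$ one has $Tx-Ty=(x-y)-(Rx-Ry)$, hence by bilinearity of the inner product
\[
\|Tx-Ty\|^2=\|x-y\|^2-2\langle x-y,\,Rx-Ry\rangle+\|Rx-Ry\|^2 .
\]
This is the only computation required; everything else is a chain of reversible algebraic manipulations, which is what makes the "if and only if" statements come out for free.

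For part (i), I would note that $T$ is nonexpansive, i.e.\ $\|Tx-Ty\|^2\le\|x-y\|^2$, precisely when the identity above forces $2\langle x-y,\,Rx-Ry\rangle\ge\|Rx-Ry\|^2$, that is
\[
\langle x-y,\,Rx-Ry\rangle\ \ge\ \tfrac12\,\|Rx-Ry\|^2\qquad\forall x,y\in C ,
\]
which is exactly the definition of $R=I-T$ being $\tfrac12$-cocoercive. Since each step is an equivalence, both implications are established at once.

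For part (ii), I would reduce to part (i). By definition $T$ is $\alpha$-averaged iff $T=(1-\alpha)I+\alpha S$ for some nonexpansive $S$, equivalently iff $S:=I-\tfrac1\alpha(I-T)$ is nonexpansive; note $I-S=\tfrac1\alpha(I-T)=\tfrac1\alpha R$. Applying part (i) to $S$, nonexpansiveness of $S$ is equivalent to $\tfrac1\alpha R$ being $\tfrac12$-cocoercive, i.e.
\[
\tfrac1\alpha\langle x-y,\,Rx-Ry\rangle\ \ge\ \tfrac{1}{2\alpha^{2}}\,\|Rx-Ry\|^2
\quad\Longleftrightarrow\quad
\langle x-y,\,Rx-Ry\rangle\ \ge\ \tfrac{1}{2\alpha}\,\|Rx-Ry\|^2 ,
\]
which is the statement that $R=I-T$ is $\tfrac{1}{2\alpha}$-cocoercive. (Alternatively, one could invoke Proposition \ref{prop2} and substitute the identity above directly.)

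I do not anticipate a real obstacle; the only delicate point is the scaling bookkeeping in part (ii) — tracking how the factor $\tfrac1\alpha$ in $I-S$ becomes the constant $\tfrac{1}{2\alpha}$ — and verifying that every implication in the two chains is genuinely reversible so that the biconditionals hold without extra hypotheses.
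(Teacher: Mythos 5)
Your proof is correct. The paper states this lemma without proof, simply citing Bauschke and Combettes \cite{bauschkebook2011}, and your argument — expanding $\|Tx-Ty\|^2$ via the identity $Tx-Ty=(x-y)-(Rx-Ry)$ to get part (i), then reducing part (ii) to part (i) through the nonexpansive factor $S=I-\tfrac{1}{\alpha}(I-T)$ — is exactly the standard derivation underlying the cited result, with the scaling in part (ii) handled correctly.
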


We refer interested readers to \cite{bauschkebook2011} for further properties of nonexpansive, firmly nonexpansive, and $\alpha$-averaged nonlinear mappings.

We recall the results of the composition of two averaged operators. The following lemma first appeared in \cite{oguraandyamada2002} after which it was extended to a finite family of composition-averaged operators \cite{Combettes2015JMAA}.

\begin{lemma}\label{composition-averaged}
Let $C$ be a nonempty subset of $H$. Let $T_1 : C\rightarrow H$ is $\alpha_1$-averaged and $T_2 :C\rightarrow H$ is $\alpha_2$-averaged. Then
$$
T := T_1 T_2 \textrm{ is } \frac{\alpha_1 +\alpha_2 - 2\alpha_1 \alpha_2}{1-\alpha_1 \alpha_2}-\textrm{averaged}.
$$

\end{lemma}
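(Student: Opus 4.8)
The plan is to derive the averagedness of $T=T_1T_2$ directly from the quadratic characterization in Proposition~\ref{prop2}, chaining that estimate through the two factors and then reducing to a scalar quadratic inequality. Fix $x,y\in C$ (so that $T_2x,T_2y$ lie in the domain of $T_1$, the composition being well defined) and abbreviate
$$
b:=(I-T_2)x-(I-T_2)y,\qquad c:=(I-T_1)T_2x-(I-T_1)T_2y .
$$
Then $T_2x-T_2y=(x-y)-b$, and hence $(I-T)x-(I-T)y=(x-y)-(T_1T_2x-T_1T_2y)=b+c$. Applying Proposition~\ref{prop2} to $T_1$ at the points $T_2x,T_2y$, and then to $T_2$ at $x,y$, I obtain
\begin{align*}
\|Tx-Ty\|^2
&\le \|T_2x-T_2y\|^2-\frac{1-\alpha_1}{\alpha_1}\|c\|^2\\
&\le \|x-y\|^2-\frac{1-\alpha_2}{\alpha_2}\|b\|^2-\frac{1-\alpha_1}{\alpha_1}\|c\|^2 .
\end{align*}

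Next I would set $\alpha:=\dfrac{\alpha_1+\alpha_2-2\alpha_1\alpha_2}{1-\alpha_1\alpha_2}$ and record the elementary identity $1-\alpha=\dfrac{(1-\alpha_1)(1-\alpha_2)}{1-\alpha_1\alpha_2}$. Since $(1-\alpha_1)(1-\alpha_2)>0$ and $\alpha_1+\alpha_2-2\alpha_1\alpha_2=\alpha_1(1-\alpha_2)+\alpha_2(1-\alpha_1)>0$, this already shows $\alpha\in(0,1)$, and moreover $p:=\dfrac{1-\alpha}{\alpha}=\dfrac{(1-\alpha_1)(1-\alpha_2)}{\alpha_1+\alpha_2-2\alpha_1\alpha_2}$. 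By the displayed estimate and the converse implication in Proposition~\ref{prop2}, it then suffices to verify
$$
p\,\|b+c\|^2\le \frac{1-\alpha_2}{\alpha_2}\|b\|^2+\frac{1-\alpha_1}{\alpha_1}\|c\|^2 .
$$
Bounding $\|b+c\|^2\le\|b\|^2+2\|b\|\,\|c\|+\|c\|^2$ (legitimate since $p>0$), this reduces to the claim that the quadratic form $Q(s,t):=\bigl(\tfrac{1-\alpha_2}{\alpha_2}-p\bigr)s^2-2p\,st+\bigl(\tfrac{1-\alpha_1}{\alpha_1}-p\bigr)t^2$ is nonnegative for $s,t\ge 0$ (with $s=\|b\|$, $t=\|c\|$).

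Finally, clearing denominators yields the two identities $\tfrac{1-\alpha_2}{\alpha_2}-p=\dfrac{\alpha_1(1-\alpha_2)^2}{\alpha_2(\alpha_1+\alpha_2-2\alpha_1\alpha_2)}$ and $\tfrac{1-\alpha_1}{\alpha_1}-p=\dfrac{\alpha_2(1-\alpha_1)^2}{\alpha_1(\alpha_1+\alpha_2-2\alpha_1\alpha_2)}$; hence both leading coefficients of $Q$ are positive and their product equals exactly $p^2$, so $Q$ is a perfect square and $Q\ge 0$, which finishes the argument. I expect the only real work to be this last algebraic step — checking that the discriminant of $Q$ vanishes identically — while the rest is bookkeeping around the characterization of averaged operators. (An alternative would be to write $T_i=(1-\alpha_i)I+\alpha_i N_i$ with $N_i$ nonexpansive and exhibit an explicit nonexpansive $N$ realizing $T=(1-\alpha)I+\alpha N$, but the scalar computation above seems the most economical route given the lemmas already available.)
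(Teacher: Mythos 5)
Your proof is correct. Note that the paper itself does not prove this lemma: it is quoted from the literature (Ogura--Yamada and Proposition 2.4 of Combettes--Yamada \cite{Combettes2015JMAA}), so there is no in-paper argument to compare against. Your chain of estimates is exactly right: with $b=(I-T_2)x-(I-T_2)y$ and $c=(I-T_1)T_2x-(I-T_1)T_2y$ one indeed has $(I-T)x-(I-T)y=b+c$, the two applications of Proposition~\ref{prop2} give $\|Tx-Ty\|^2\le\|x-y\|^2-\tfrac{1-\alpha_2}{\alpha_2}\|b\|^2-\tfrac{1-\alpha_1}{\alpha_1}\|c\|^2$, and your identities $1-\alpha=\tfrac{(1-\alpha_1)(1-\alpha_2)}{1-\alpha_1\alpha_2}$, $\tfrac{1-\alpha_2}{\alpha_2}-p=\tfrac{\alpha_1(1-\alpha_2)^2}{\alpha_2(\alpha_1+\alpha_2-2\alpha_1\alpha_2)}$, $\tfrac{1-\alpha_1}{\alpha_1}-p=\tfrac{\alpha_2(1-\alpha_1)^2}{\alpha_1(\alpha_1+\alpha_2-2\alpha_1\alpha_2)}$ check out, so $Q(s,t)=\bigl(\sqrt{A}\,s-\sqrt{B}\,t\bigr)^2$ with $AB=p^2$ and the conclusion follows from the converse direction of Proposition~\ref{prop2}. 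This is essentially the standard proof: where Combettes--Yamada bound $\|b+c\|^2$ by writing $b+c$ as a convex combination and optimizing the resulting weights, you reach the same sharp constant via Cauchy--Schwarz and the vanishing discriminant of $Q$; the two devices are equivalent, and yours is arguably the more transparent computation. Two minor points worth flagging: the composition requires $T_2(C)\subseteq C$ (which you note parenthetically, and which is implicit in the lemma statement), and Proposition~\ref{prop2} as printed in the paper is missing the square on its final norm --- you correctly use the squared version from Proposition 4.25 of \cite{bauschkebook2011}.
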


\begin{remark}
(i)\ It is worth mentioning that two other results of the combination of averaged operators were reported. From Proposition 4.32 of \cite{bauschkebook2011}, $T := T_1 T_2$ is $\overline{\alpha} = \frac{2}{1+\frac{1}{\max(\alpha_1,\alpha_2)}}$-averaged. From Byrne \cite{Byrne2004}, $T := T_1 T_2$ is $\widehat{\alpha} = \alpha_1 + \alpha_2 -\alpha_1 \alpha_2$-averaged. It is not difficult to verify that $\frac{\alpha_1 +\alpha_2 - 2\alpha_1 \alpha_2}{1-\alpha_1 \alpha_2}$ is smaller than the other two constants $\overline{\alpha}$ and $\widehat{\alpha}$.

(ii)\ The constant $\widehat{\alpha}$ is used in \cite{Wangfh2014JAM} to show the upper bound of the relaxation parameter $\lambda_k$ such that $\lambda_k < \frac{1}{\widehat{\alpha}}$.

\end{remark}

We employ the following previously used notation \cite{Combettes2014Optimization}. Let $\mathcal{B}(H,G)$ be the spaces of bounded linear operators from Hilbert space $H$ to Hilbert space $G$. The norm of $L\in \mathcal{B}(H,G)$ is defined as $\|L\| = \sup_{x\in H}\frac{\|Lx\|}{\|x\|}$. We set $\mathcal{B}(H)= \mathcal{B}(H,H)$ and $\mathbb{S}(H)=\{L\in \mathcal{B}(H)| L=L^{*}\}$, where $L^{*}$ denotes the adjoint of $L$. The Loewner partial ordering on $S(H)$ is defined by, for any $U,V\in S(H)$,
\begin{equation}
U \succeq V \Leftrightarrow  \langle Ux,x\rangle \geq \langle Vx,x\rangle, \forall x\in H . \nonumber
\end{equation}
Let $\alpha \in [0, +\infty)$, set
\begin{equation}
\mathcal{P}_{\alpha}(H)=\{U\in S(H)|U\succeq \alpha I\}. \nonumber
\end{equation}
We denote $\sqrt{U}$ as the square root of $U\in \mathcal{P}_{\alpha}(H)$. Moreover, for every $U\in \mathcal{P}_{\alpha}(H)$, we define a semi-scalar product and a semi-norm (a scalar product and a norm , if $\alpha>0$ by
\begin{equation}
(\forall x\in H) (\forall y\in H)\quad \langle x,y \rangle_{U}= \langle Ux,y\rangle \quad  and \quad  \left\|x\right\|_{U}=\sqrt{\langle Ux,x\rangle}. \nonumber
\end{equation}

We borrow the following results on monotone operators in a variable metric setting from Combettes’s work \cite{Combettes2014Optimization}.

\begin{lemma}\label{variable-metric-maximal-monotone}
Let $A:H\rightarrow 2^{H}$ be maximal monotone, let $\alpha\in (0,+\infty)$, let $U\in \mathcal{P}_{\alpha}(H)$
and let $H_{U^{-1}}$ be the real Hilbert space with the scalar product $\langle x,y \rangle_{U^{-1}}=\langle U^{-1}x,y\rangle , \forall x,y\in H$.
Then the following hold:

\noindent (i) $UA:H\rightarrow 2^{H}$ is maximal monotone;

\noindent (ii) $J_{UA}:H\rightarrow 2^{H}$ is $1$-cocoercive, i.e., firmly nonexpansive. More precisely,
\begin{equation}
\|J_{UA} x-J_{UA} y\|^{2}_{U^{-1}}\leq \|x-y\|^{2}_{U^{-1}}-\|(I-J_{UA})x-(I-J_{UA})y\|^{2}_{U^{-1}}, \quad \forall x,y\in H.
\end{equation}
\noindent (iii) $J_{UA}=(U^{-1}+A)^{-1}\circ U^{-1}$
\end{lemma}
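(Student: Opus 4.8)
The plan is to transport everything to the renormed space $H_{U^{-1}}$ and apply Minty's theorem there. Since $U\in\mathcal P_\alpha(H)$ with $\alpha>0$ is bounded and self-adjoint with $\alpha I\preceq U\preceq\|U\|I$, it is invertible and $\tfrac{1}{\|U\|}I\preceq U^{-1}\preceq\tfrac{1}{\alpha}I$; hence $\|\cdot\|_{U^{-1}}$ is a norm equivalent to $\|\cdot\|$, $H_{U^{-1}}$ is a real Hilbert space, and its identity operator coincides, as a set-valued map, with $I$. I would first prove (iii) by unwinding the definition of the resolvent: for $x,p\in H$, the definition $J_{UA}=(I+UA)^{-1}$ together with the linearity of $U$ gives the chain $p=J_{UA}x\iff x-p\in UAp\iff U^{-1}(x-p)\in Ap\iff U^{-1}x\in(U^{-1}+A)p$, and the last membership is exactly $p=(U^{-1}+A)^{-1}(U^{-1}x)$. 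Since $U^{-1}+A$ is strongly monotone (shown below), its inverse is single-valued, so this does define a mapping and (iii) follows; as a by-product one obtains the operator identity $I+UA=U\circ(U^{-1}+A)$, which drives the proof of (i).

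For (i) I would first record the one-line computation, using the monotonicity of $A$, that $UA$ is monotone for $\langle\cdot,\cdot\rangle_{U^{-1}}$: for $a\in Ax$ and $b\in Ay$, $\langle Ua-Ub,\,x-y\rangle_{U^{-1}}=\langle a-b,\,x-y\rangle\ge0$. Next, $U^{-1}$ is a bounded, everywhere-defined, Lipschitz, $\tfrac{1}{\|U\|}$-strongly monotone operator, so by the standard perturbation result (e.g.\ \cite{bauschkebook2011}) $U^{-1}+A$ is maximal monotone on $H$; being also strongly monotone, it is surjective onto $H$ (equivalently, $(U^{-1}+A)-\tfrac{1}{\|U\|}I$ is again maximal monotone, so Minty's theorem gives $\operatorname{ran}(U^{-1}+A)=H$). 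Combining this with $I+UA=U\circ(U^{-1}+A)$ and the bijectivity of $U$ yields $\operatorname{ran}(I+UA)=U(H)=H$. Since $UA$ is monotone in $H_{U^{-1}}$, whose identity operator is $I$, Minty's theorem applied in $H_{U^{-1}}$ shows $UA$ is maximal monotone there.

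Then (ii) is immediate from (i): $J_{UA}=(I+UA)^{-1}$ is the resolvent of a maximal monotone operator in the Hilbert space $H_{U^{-1}}$, hence everywhere defined on $H$ and firmly nonexpansive in $H_{U^{-1}}$, which is precisely the displayed inequality. Alternatively, I would give the direct derivation: with $p=J_{UA}x$ and $q=J_{UA}y$ we have $U^{-1}(x-p)\in Ap$ and $U^{-1}(y-q)\in Aq$, so monotonicity of $A$ gives $\langle(x-y)-(p-q),\,p-q\rangle_{U^{-1}}\ge0$, i.e.\ $\langle x-y,\,p-q\rangle_{U^{-1}}\ge\|p-q\|_{U^{-1}}^2$ (condition (iv) of Proposition \ref{prop1} read in $H_{U^{-1}}$), and expanding $\|x-y\|_{U^{-1}}^2=\|(x-p)-(y-q)+(p-q)\|_{U^{-1}}^2$ yields the cocoercivity estimate.

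The main obstacle is the maximality claim in (i): one has to keep straight that maximal monotonicity of $UA$ is understood relative to $\langle\cdot,\cdot\rangle_{U^{-1}}$ — in the original metric $UA$ need not even be monotone — and one has to invoke the two facts that adding a bounded, everywhere-defined, monotone operator preserves maximal monotonicity and that a maximal monotone operator which is strongly monotone is onto. Everything else is routine bookkeeping once the identity $I+UA=U\circ(U^{-1}+A)$ from (iii) is available.
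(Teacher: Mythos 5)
The paper does not prove this lemma at all: it is quoted verbatim from Combettes and V\~{u} \cite{Combettes2014Optimization}, so there is no in-paper argument to compare against. Your proof is correct and is essentially the standard one from that reference: establish (iii) by unwinding $p=J_{UA}x\iff U^{-1}x\in(U^{-1}+A)p$, verify monotonicity of $UA$ in $H_{U^{-1}}$, obtain surjectivity of $I+UA$ from the maximal and strong monotonicity of $U^{-1}+A$ via the factorization $I+UA=U\circ(U^{-1}+A)$ and Minty's theorem, and read off firm nonexpansiveness of the resolvent in the renormed space; your explicit caveat that maximality is meant relative to $\langle\cdot,\cdot\rangle_{U^{-1}}$ is exactly the point that matters.
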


Let $U\in \mathcal{P}_{\alpha}(H)$ for some $\alpha >0$. The proximity operator of $f\in \Gamma_{0}(H)$ relative to the metric induced by $U$ is defined by
$$
prox_{f}^{U} :H\rightarrow H : x \mapsto \arg\min_{y\in H} \left ( \frac{1}{2}\|x-y\|_{U}^{2} + f(y)    \right).
$$
We have $prox_{f}^{U} = J_{U^{-f}\partial f}$ and we can write $prox_{f}^{I}=prox_{f}$.

We make full use of the following lemmas to obtain the weak convergence of the considered iterative sequence. Both of the two lemmas were previously reported \cite{Combettes2013NA}. In the following, we denote by $\ell_{+}^{1}(\mathbb{N})$ the set of summable sequences in $[0,+\infty)$, where $\mathbb{N}$ is a set of nonnegative integer numbers.

\begin{lemma}\label{conergence-lemma1}
Let $\alpha \in (0,+\infty)$, and let $\{W_{k}\} $ be in $\mathcal{P}_{\alpha}(H)$, let $C$ be a nonempty subset of $H$, and let $\{x_{k}\}$ be a sequence in $H$ such that
\begin{equation}\label{lemma-inequality}
\left\| x_{k+1}-z\right\|_{W_{k+1}}\leq (1+\eta_{k})\left\| x_{k}-z\right\|_{W_{k}}+\epsilon_{k}, \forall z\in C,
\end{equation}
where $\{\eta_{n}\}\subset \ell_{+}^{1}(\mathbb{N})$ and $\{\epsilon_{k}\}\subset \ell_{+}^{1}(\mathbb{N})$.
Then $\{x_{k}\}$ is bounded and, for every $z \in C$, $(\left\| x_{k}-z\right\|_{W_{k}})$ converges.
\end{lemma}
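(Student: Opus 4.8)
The plan is to reduce the claim to a classical fact about nonnegative real sequences of quasi-Fej\'er type, applied to the scalar sequences $(\|x_k-z\|_{W_k})$. Fix $z\in C$ and set $a_k=\|x_k-z\|_{W_k}\ge 0$, so that (\ref{lemma-inequality}) reads $a_{k+1}\le(1+\eta_k)a_k+\epsilon_k$ for all $k$, with $\{\eta_k\},\{\epsilon_k\}\in\ell^1_+(\mathbb{N})$. First I would record that the infinite product $P_\infty:=\prod_{j=0}^{\infty}(1+\eta_j)$ is finite: with $P_k:=\prod_{j=0}^{k-1}(1+\eta_j)$ (so $P_0=1$), the sequence $\{P_k\}$ is nondecreasing and, using $1+\eta_j\le e^{\eta_j}$, satisfies $1\le P_k\le e^{\sum_{j}\eta_j}<\infty$; hence $\{P_k\}$ converges to $P_\infty$.

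Next, dividing the recursion by $P_{k+1}\ge 1$ and writing $c_k:=a_k/P_k$ yields $c_{k+1}\le c_k+\epsilon_k/P_{k+1}\le c_k+\epsilon_k$. Then the sequence $d_k:=c_k-\sum_{j=0}^{k-1}\epsilon_j$ is nonincreasing, because $d_{k+1}\le c_k+\epsilon_k-\sum_{j=0}^{k}\epsilon_j=d_k$, and it is bounded below by $-\sum_{j=0}^{\infty}\epsilon_j$ since $c_k\ge 0$; therefore $\{d_k\}$ converges, and so does $c_k=d_k+\sum_{j=0}^{k-1}\epsilon_j$, and finally $a_k=P_kc_k$ converges as the product of two convergent sequences. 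This gives the convergence of $(\|x_k-z\|_{W_k})$ for every $z\in C$.

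For boundedness, I would fix one $z_0\in C$ (possible since $C\neq\emptyset$). By the previous step $(\|x_k-z_0\|_{W_k})$ converges, hence is bounded by some $M\ge 0$. Since $W_k\in\mathcal{P}_\alpha(H)$ we have $\|x_k-z_0\|_{W_k}^2=\langle W_k(x_k-z_0),x_k-z_0\rangle\ge\alpha\|x_k-z_0\|^2$, so $\|x_k-z_0\|\le M/\sqrt{\alpha}$ and thus $\|x_k\|\le\|z_0\|+M/\sqrt{\alpha}$ for every $k$; hence $\{x_k\}$ is bounded.

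I do not expect a genuine obstacle: this is essentially the variable-metric version of the standard quasi-Fej\'er monotonicity lemma. The only two points that need mild care are the passage from summability of $\{\eta_k\}$ to convergence (and uniform boundedness) of the partial products $P_k$, and the use of the \emph{uniform} lower bound $W_k\succeq\alpha I$ — rather than a single fixed metric — so that control of $\|x_k-z_0\|_{W_k}$ translates into control of $\|x_k-z_0\|$ in the ambient norm of $H$.
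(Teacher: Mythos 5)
Your proof is correct. The paper itself gives no proof of this lemma --- it is imported verbatim from the cited reference on variable metric quasi-Fej\'er monotonicity --- and your argument is exactly the standard one used there: the summability of $\{\eta_k\}$ makes the partial products $P_k=\prod_{j<k}(1+\eta_j)$ converge, normalizing by them reduces the recursion to a nonincreasing-up-to-summable-perturbation scalar sequence whose limit exists, and the uniform coercivity $W_k\succeq\alpha I$ converts the bound on $\|x_k-z_0\|_{W_k}$ into a bound on $\|x_k-z_0\|$ in the ambient norm. Both the telescoping step ($(1+\eta_k)/P_{k+1}=1/P_k$) and the lower bound $d_k\geq-\sum_j\epsilon_j$ check out, so there is nothing to repair.
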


\begin{lemma}\label{convergence-lemma2}
Let $\alpha \in (0,+\infty)$, and let $\{W_{k}\} $ and $W$ be in $\mathcal{P}_{\alpha}(H)$ such that $W_k \rightarrow W$ pointwise as $k\rightarrow +\infty$, as is the case when
$$
sup_{k\in N} \left\|W_{k}\right\| < +\infty \textrm{ and } (\exists \{\eta_k\}\subset \ell_{+}^{1}(\mathbb{N})) (1+\eta_k)W_k \succeq W_{k+1}.
$$
Let $C$ be a nonempty subset of $H$, and let $\{x_{k}\}$ be a sequence in $H$ such that (\ref{lemma-inequality}) is satisfied.
Then $\{x_{k}\}$ converges weakly to a point in $C$ if and only if every weak sequential cluster point of $\{x_{k}\}$ is in C.
\end{lemma}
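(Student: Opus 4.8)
The plan is to read this as a variable-metric version of Opial's lemma, so that the whole statement reduces to the uniqueness of the weak sequential cluster point of $\{x_k\}$. The forward implication is immediate: if $x_k \rightharpoonup \bar{x}$ for some $\bar{x}\in C$, then $\bar{x}$ is the unique weak sequential cluster point of $\{x_k\}$ and it lies in $C$ by assumption. For the converse, I would start from Lemma \ref{conergence-lemma1}: since (\ref{lemma-inequality}) holds, $\{x_k\}$ is bounded and, for each $z\in C$, the sequence $(\|x_k-z\|_{W_k})$ converges, hence so does $(\|x_k-z\|_{W_k}^{2})$. Boundedness of $\{x_k\}$ in the Hilbert space $H$ yields at least one weak sequential cluster point, and by hypothesis every such point is in $C$; it therefore suffices to prove that there is exactly one.

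For uniqueness, suppose $x$ and $y$ are weak sequential cluster points of $\{x_k\}$, say $x_{k_n}\rightharpoonup x$ and $x_{m_n}\rightharpoonup y$, both in $C$. Using $W_k=W_k^{*}$, I would expand
\[
\|x_k-x\|_{W_k}^{2}-\|x_k-y\|_{W_k}^{2}
= -2\langle x_k, W_k(x-y)\rangle + \langle W_k x, x\rangle - \langle W_k y, y\rangle .
\]
Pointwise convergence $W_k\to W$ sends $\langle W_k x, x\rangle\to\langle Wx,x\rangle$ and $\langle W_k y, y\rangle\to\langle Wy,y\rangle$, and since $W_k(x-y)\to W(x-y)$ strongly while $\{x_k\}$ is bounded, $\langle x_k, W_k(x-y)\rangle = \langle x_k, W(x-y)\rangle + o(1)$. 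The left-hand side converges by the previous paragraph, so $\langle x_k, W(x-y)\rangle$ converges to some constant $c$; passing to the limit along $\{x_{k_n}\}$ and along $\{x_{m_n}\}$ gives $c=\langle W(x-y),x\rangle$ and $c=\langle W(x-y),y\rangle$, whence $\|x-y\|_{W}^{2}=\langle W(x-y),x-y\rangle=0$. Since $W\in\mathcal{P}_{\alpha}(H)$ with $\alpha>0$, this forces $x=y$. A bounded sequence in $H$ with a unique weak sequential cluster point converges weakly to it, so $\{x_k\}$ converges weakly to the common cluster point, which lies in $C$.

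It remains to justify the parenthetical: that $\sup_{k}\|W_k\|<+\infty$ together with $(1+\eta_k)W_k\succeq W_{k+1}$, $\{\eta_k\}\subset\ell_{+}^{1}(\mathbb{N})$, implies $W_k\to W$ pointwise for some $W\in\mathcal{P}_{\alpha}(H)$. Put $\mu_k=\prod_{j=0}^{k-1}(1+\eta_j)$, which increases to a finite limit $\mu_\infty$; then $V_k:=\mu_k^{-1}W_k$ satisfies $V_k\succeq V_{k+1}$ in the Loewner order and $V_k\succeq(\alpha/\mu_\infty)I$, so the monotone convergence theorem for bounded self-adjoint operators gives $V_k\to V$ in the strong operator topology with $V\succeq(\alpha/\mu_\infty)I$, and hence $W_k=\mu_kV_k\to\mu_\infty V=:W\succeq\alpha I$ pointwise. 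The one place that genuinely needs care is precisely this last passage: weak operator convergence of $\{W_k\}$ is cheap, but the uniqueness argument above uses the \emph{strong} convergence $W_k(x-y)\to W(x-y)$ to annihilate the cross term paired against the merely bounded sequence $\{x_k\}$ — everything else is routine bookkeeping.
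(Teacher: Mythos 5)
Your proof is correct. The paper itself gives no argument for this lemma --- it is imported verbatim from the cited reference \cite{Combettes2013NA} --- and your derivation (quasi-F\'ejer boundedness from Lemma \ref{conergence-lemma1}, the variable-metric Opial identity to force uniqueness of the weak cluster point via $\|x-y\|_{W}^{2}=0$ with $W\succeq\alpha I$, and the operator monotone convergence theorem for the parenthetical claim) is essentially the standard argument of that reference, including the correct observation that \emph{strong} pointwise convergence of $W_k$ is what kills the cross term against the merely bounded sequence $\{x_k\}$.
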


The following lemma can be found in Corollary 2.14 in the book by Bauschke and Combettes \cite{bauschkebook2011}.
\begin{lemma}\label{lemma-equation}
Let $x\in H, y \in H$, and $\alpha \in R$. Then
\begin{equation}\label{lemma-equality}
\left\|\alpha x + (1-\alpha)y \right\|^{2}=\alpha \left\|x\right\|^{2}+(1-\alpha) \left\|x\right\|^{2}-\alpha (1-\alpha)\left\|x-y\right\|^{2}
\end{equation}
\end{lemma}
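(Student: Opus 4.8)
The statement is a purely algebraic identity in the Hilbert space $H$, valid for every real $\alpha$ (the convexity restriction $\alpha\in[0,1]$ plays no role), so the plan is simply to expand both sides using the bilinearity and symmetry of $\langle\cdot,\cdot\rangle$ and to compare coefficients. First I would write
\[
\left\|\alpha x+(1-\alpha)y\right\|^{2}=\langle \alpha x+(1-\alpha)y,\ \alpha x+(1-\alpha)y\rangle=\alpha^{2}\|x\|^{2}+2\alpha(1-\alpha)\langle x,y\rangle+(1-\alpha)^{2}\|y\|^{2}.
\]
Next I would expand $\|x-y\|^{2}=\|x\|^{2}-2\langle x,y\rangle+\|y\|^{2}$ and substitute it into the right-hand side (with the second summand there read as $(1-\alpha)\|y\|^{2}$), obtaining
\[
\alpha\|x\|^{2}+(1-\alpha)\|y\|^{2}-\alpha(1-\alpha)\bigl(\|x\|^{2}-2\langle x,y\rangle+\|y\|^{2}\bigr).
\]
Then I would collect terms: the coefficient of $\|x\|^{2}$ is $\alpha-\alpha(1-\alpha)=\alpha^{2}$, the coefficient of $\|y\|^{2}$ is $(1-\alpha)-\alpha(1-\alpha)=(1-\alpha)^{2}$, and the coefficient of $\langle x,y\rangle$ is $2\alpha(1-\alpha)$. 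Since these agree term by term with the expansion of the left-hand side, the identity follows.

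There is essentially no obstacle here; the only point requiring care is the bookkeeping of the scalars $\alpha$, $1-\alpha$, $\alpha^{2}$, $(1-\alpha)^{2}$, and $\alpha(1-\alpha)$. An equivalent, slightly slicker presentation is to set $t=1-\alpha$ and observe that $\phi(t):=\|x+t(y-x)\|^{2}=\|x\|^{2}+2t\langle x,y-x\rangle+t^{2}\|y-x\|^{2}$ is a quadratic polynomial in $t$ with $\phi(0)=\|x\|^{2}$ and $\phi(1)=\|y\|^{2}$; eliminating $\langle x,y-x\rangle$ via $\phi(1)-\phi(0)=2\langle x,y-x\rangle+\|x-y\|^{2}$ yields $\phi(t)=(1-t)\phi(0)+t\phi(1)-t(1-t)\|x-y\|^{2}$, which is exactly the claimed identity since $\|x+t(y-x)\|^{2}=\|\alpha x+(1-\alpha)y\|^{2}$. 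Either route is a one-line computation, so I would simply present the direct expansion above as the proof.
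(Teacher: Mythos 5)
Your proof is correct: the paper itself gives no proof of this identity, merely citing Corollary 2.14 of Bauschke and Combettes, and your direct expansion (or the equivalent quadratic-interpolation argument) is exactly the standard one-line verification that reference uses. You also rightly observed that the second term on the right-hand side of the paper's equation (\ref{lemma-equality}) should read $(1-\alpha)\left\|y\right\|^{2}$ rather than $(1-\alpha)\left\|x\right\|^{2}$; the identity as printed is a typo, and your proof establishes the corrected version, which is the one actually used later in the derivation of (\ref{eq3-6}).
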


\section{Variable metric forward-backward splitting algorithm}

In this section, we study the convergence of the variable metric forward-backward splitting algorithm. First, we prove the following useful lemmas.

\begin{lemma}\label{our-lemma1}
Let $B:H\rightarrow H $ be a $\beta$-cocoercive operator. Let $\alpha>0$, and let $U\in \mathcal{P}_{\alpha}(H)$.
Let $H_{U^{-1}}$ be a real Hilbert space with the scalar product $\langle x,y\rangle_{U^{-1}}=\langle U^{-1}x,y\rangle, \forall x,y\in H$.
Then $I-\gamma UB$ is a $\frac{\gamma \|U\|}{2\beta}$-averaged operator on $H_{U^{-1}}$, for any $\gamma\in(0,\frac{2\beta}{\|U\|})$.
\end{lemma}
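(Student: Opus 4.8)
The plan is to reduce the claim to the standard correspondence between cocoercive and averaged operators (Lemma \ref{lemma-complement}(ii)), applied inside the Hilbert space $H_{U^{-1}}$. Concretely, I would first show that $UB$ is $\frac{\beta}{\|U\|}$-cocoercive \emph{as an operator on $H_{U^{-1}}$}, and then read off the averagedness of $I-\gamma UB$ by a purely formal scaling argument.

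For the cocoercivity step, fix $x,y\in H$. Using the definition of the scalar product $\langle\cdot,\cdot\rangle_{U^{-1}}$ and the $\beta$-cocoercivity of $B$ in the original metric,
$$\langle x-y,\,UBx-UBy\rangle_{U^{-1}} = \langle U^{-1}(x-y),\,UBx-UBy\rangle = \langle x-y,\,Bx-By\rangle \ge \beta\,\|Bx-By\|^{2}.$$
On the other hand, since $U=U^{*}$ and $\langle Uz,z\rangle\le\|U\|\,\|z\|^{2}$ for every $z\in H$,
$$\|UBx-UBy\|_{U^{-1}}^{2} = \langle U^{-1}(UBx-UBy),\,UBx-UBy\rangle = \langle U(Bx-By),\,Bx-By\rangle \le \|U\|\,\|Bx-By\|^{2}.$$
Combining the two displays yields $\langle x-y,\,UBx-UBy\rangle_{U^{-1}}\ge \frac{\beta}{\|U\|}\,\|UBx-UBy\|_{U^{-1}}^{2}$, i.e.\ $UB$ is $\frac{\beta}{\|U\|}$-cocoercive on $H_{U^{-1}}$.

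For the second step I would invoke the elementary scaling rule that a $\kappa$-cocoercive operator $M$ (on any Hilbert space) gives a $\frac{\kappa}{\gamma}$-cocoercive operator $\gamma M$ for each $\gamma>0$; applied with $M=UB$ and $\kappa=\frac{\beta}{\|U\|}$ this shows $\gamma UB$ is $\frac{\beta}{\gamma\|U\|}$-cocoercive on $H_{U^{-1}}$. Setting $\alpha:=\frac{\gamma\|U\|}{2\beta}$, the hypothesis $\gamma\in\bigl(0,\tfrac{2\beta}{\|U\|}\bigr)$ guarantees $\alpha\in(0,1)$, and a direct check gives $\frac{1}{2\alpha}=\frac{\beta}{\gamma\|U\|}$. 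Hence $I-(I-\gamma UB)=\gamma UB$ is $\frac{1}{2\alpha}$-cocoercive on $H_{U^{-1}}$, and Lemma \ref{lemma-complement}(ii), applied in the Hilbert space $H_{U^{-1}}$, yields that $I-\gamma UB$ is $\alpha=\frac{\gamma\|U\|}{2\beta}$-averaged on $H_{U^{-1}}$, which is the assertion.

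I do not expect any genuine obstacle here; the only point requiring care is bookkeeping about which inner product each cocoercivity/averagedness statement refers to. The hypothesis on $B$ is formulated in the original metric of $H$, the conclusion must be read in $H_{U^{-1}}$, and the bridge between them is exactly the identity $\|Uz\|_{U^{-1}}^{2}=\langle Uz,z\rangle$ together with the bound $\langle Uz,z\rangle\le\|U\|\,\|z\|^{2}$. Once these are used, everything else is a formal consequence of results already recorded in Section~\ref{sec:pre}.
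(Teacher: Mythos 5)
Your proposal is correct and follows essentially the same route as the paper's own proof: establish that $UB$ is $\frac{\beta}{\|U\|}$-cocoercive on $H_{U^{-1}}$ via the two displayed estimates, scale to get $\gamma UB$ being $\frac{\beta}{\gamma\|U\|}$-cocoercive, and conclude with Lemma \ref{lemma-complement}(ii). Your version is in fact slightly more explicit than the paper's in spelling out the identity $\|Uz\|_{U^{-1}}^{2}=\langle Uz,z\rangle$ and in verifying that $\frac{\gamma\|U\|}{2\beta}\in(0,1)$.
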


\begin{proof}
Let $x,y\in H$. Because $B$ is $\beta$-cocoercive, we have
\begin{align}\label{our-lem-eq1}
\langle UBx-UBy,x-y\rangle_{U^{-1}}
& = \langle Bx-By,x-y\rangle \nonumber \\
& \geq \beta \|Bx-By\|^{2}.
\end{align}
On the other hand, we obtain
\begin{equation}\label{our-lem-eq2}
\|UBx-UBy\|^{2}_{U^{-1}}\leq \|U\|\cdot\|Bx-By\|^{2}.
\end{equation}
From (\ref{our-lem-eq1}) and (\ref{our-lem-eq2}), we obtain
\begin{equation}
\langle UBx-UBy,x-y\rangle_{U^{-1}}\geq \frac{\beta}{\|U\|}\cdot\|UBx-UBy\|^{2}_{U^{-1}},
\end{equation}
which means that $UB$ is $\frac{\beta}{\|U\|}$-cocoercive on $H_{U^{-1}}$.
Then $\gamma UBx$ is $\frac{\beta}{\gamma\|U\|}$-cocoercive. By Lemma \ref{lemma-complement} (ii), $I-\gamma UB$ is $\frac{\gamma\|U\|}{2\beta}$-averaged  operator on
$H_{U^{-1}}$.
\end{proof}

\begin{lemma}\label{our-lemma2}
Let $A:H\rightarrow 2^{H}$ be maximal monotone. Let $\alpha\in (0,+\infty)$, and let $U\in \mathcal{P}_{\alpha}(H)$. Let $H_{U^{-1}}$ be a real
Hilbert space with the scalar product $\langle x,y\rangle_{U^{-1}}= \langle U^{-1}x,y\rangle,\forall x,y\in H$. Let $B:H\rightarrow H$ be a $\beta$-cocoercive operator.
Then, for any $\gamma\in(0,\frac{2\beta}{\|U\|}), J_{\gamma UA}(I-\gamma UB)$ is $\frac{2\beta}{4\beta-\gamma\|U\|}$-averaged on $H_{U^{-1}}$.
\end{lemma}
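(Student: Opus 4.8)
The plan is to realize $J_{\gamma UA}(I-\gamma UB)$ as a composition of two averaged operators on the Hilbert space $H_{U^{-1}}$ and then invoke Lemma \ref{composition-averaged}. By Lemma \ref{variable-metric-maximal-monotone}(ii), the resolvent $J_{\gamma UA} = J_{U(\gamma A)}$ is firmly nonexpansive on $H_{U^{-1}}$, hence $\tfrac{1}{2}$-averaged; take $\alpha_1 = \tfrac{1}{2}$. By Lemma \ref{our-lemma1}, the forward step $I-\gamma UB$ is $\tfrac{\gamma\|U\|}{2\beta}$-averaged on $H_{U^{-1}}$ for $\gamma\in(0,\tfrac{2\beta}{\|U\|})$; take $\alpha_2 = \tfrac{\gamma\|U\|}{2\beta}$, which indeed lies in $(0,1)$ on this range of $\gamma$. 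Note that $\gamma A$ is still maximal monotone, so Lemma \ref{variable-metric-maximal-monotone}(ii) applies verbatim with $A$ replaced by $\gamma A$.

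Next I would substitute these two constants into the formula from Lemma \ref{composition-averaged}, which says $T_1T_2$ is $\tfrac{\alpha_1+\alpha_2-2\alpha_1\alpha_2}{1-\alpha_1\alpha_2}$-averaged. With $\alpha_1 = \tfrac{1}{2}$ this simplifies: the numerator becomes $\tfrac12 + \alpha_2 - \alpha_2 = \tfrac12$, and the denominator becomes $1-\tfrac{\alpha_2}{2}$, so the averagedness constant is $\tfrac{1/2}{1-\alpha_2/2} = \tfrac{1}{2-\alpha_2}$. Plugging in $\alpha_2 = \tfrac{\gamma\|U\|}{2\beta}$ gives $\tfrac{1}{2 - \gamma\|U\|/(2\beta)} = \tfrac{2\beta}{4\beta - \gamma\|U\|}$, which is exactly the claimed constant. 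One should also check that this constant lies in $(0,1)$: since $0 < \gamma\|U\| < 2\beta$, we have $4\beta - \gamma\|U\| \in (2\beta, 4\beta)$, so the constant lies in $(\tfrac12, 1)$, confirming it is a legitimate averagedness parameter.

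There is essentially no hard part here — the argument is a bookkeeping exercise chaining together three earlier lemmas. The only point requiring a small amount of care is making sure the averagedness of all operators is taken with respect to the \emph{same} metric, namely $\langle\cdot,\cdot\rangle_{U^{-1}}$: both Lemma \ref{our-lemma1} and Lemma \ref{variable-metric-maximal-monotone}(ii) are stated in $H_{U^{-1}}$, and Lemma \ref{composition-averaged} is a metric-independent statement about any Hilbert space, so applying it in $H_{U^{-1}}$ is legitimate. A secondary subtlety is the harmless replacement of $A$ by $\gamma A$ (still maximal monotone) so that $J_{\gamma UA}$ is seen as the resolvent of the operator $U(\gamma A)$ in the notation of Lemma \ref{variable-metric-maximal-monotone}. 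Once these identifications are in place, the computation above closes the proof.
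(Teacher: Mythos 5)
Your argument is correct and follows exactly the paper's own route: firmly nonexpansive (hence $\tfrac12$-averaged) resolvent from Lemma \ref{variable-metric-maximal-monotone}(ii), the $\tfrac{\gamma\|U\|}{2\beta}$-averagedness of $I-\gamma UB$ from Lemma \ref{our-lemma1}, and the composition formula of Lemma \ref{composition-averaged}, all in the metric of $H_{U^{-1}}$. Your added checks (that the resulting constant lies in $(\tfrac12,1)$ and that $\gamma A$ remains maximal monotone) are sound and slightly more explicit than the paper's version, but the proof is essentially identical.
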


\begin{proof}
Because $A$ is maximal monotone, then for any $\gamma>0$, $\gamma UA$ is maximal monotone. According to Lemma \ref{variable-metric-maximal-monotone} (ii), $J_{\gamma UA}$ is $1$-cocoercive on $H_{U^{-1}}$.
Then $J_{\gamma UA}$ is $\frac{1}{2}$-averaged. Lemma \ref{our-lemma1} determines that $I-\gamma UB$ is $\frac{\gamma\|U\|}{2\beta}$-averaged. Therefore, we apply Lemma \ref{composition-averaged},
from which we know that $J_{\gamma UA}(I-\gamma UB)$ is
\begin{equation}
\frac{\alpha_{1}+\alpha_{2}-2\alpha_{1}\alpha_{2}}{1-\alpha_{1}\alpha_{2}}=\frac{\frac{1}{2}+\frac{\gamma\|U\|}{2\beta}
-\frac{\gamma\|U\|}{2\beta}}{1-\frac{1}{2}\cdot\frac{\gamma\|U\|}{2\beta}}=\frac{2\beta}{4\beta-\gamma\|U\|},
\end{equation}
which is the averaged operator.
\end{proof}

\begin{lemma}\label{our-lemma3}
Let $H$ be a real Hilbert space. Let $A:H \rightarrow 2^{H}$ be a maximal monotone operator. Let $B:H \rightarrow H$ be a $\beta$-cocoercive operator, for some $\beta > 0$. Suppose that $\Omega:=$zer$(A+B)\neq \emptyset$. Let $\gamma_k >0$, $\alpha > 0$, and $\{U_{k}\}\subset \mathcal{P}_{\alpha}(H)$.
Then the following are equivalent:

\noindent \emph{(i)}  $x^{*} \in zer(A+B)$.\\
\noindent \emph{(ii)} $x^{*} = J_ {\gamma_{k}U_{k}A}(I - \gamma_{k}U_{k}B)(x^{*})$, for any $\gamma_{k}> 0$.\\
\noindent \emph{(iii)} $x^{*} = (\frac{U_{k}^{-1}+\gamma_{k} A}{\alpha})^{-1} \circ (\frac{U_{k}^{-1}-\gamma_{k} B}{\alpha})x^{*}$.
\end{lemma}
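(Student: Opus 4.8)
The plan is to prove the two equivalences \emph{(i)}$\Leftrightarrow$\emph{(ii)} and \emph{(ii)}$\Leftrightarrow$\emph{(iii)}, using throughout two structural facts about the data: since $U_k\in\mathcal P_\alpha(H)$ with $\alpha>0$, each $U_k$ is a self-adjoint, positive-definite bounded operator, hence boundedly invertible (so $\gamma_k U_k$ is injective for every $\gamma_k>0$); and since $A$ is maximal monotone, Lemma \ref{variable-metric-maximal-monotone}\,(i) gives that $\gamma_k U_k A$ is maximal monotone, so $J_{\gamma_k U_k A}=(I+\gamma_k U_k A)^{-1}$ is single-valued and defined on all of $H$.

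For \emph{(i)}$\Leftrightarrow$\emph{(ii)}: starting from $0\in Ax^\ast+Bx^\ast$, i.e. $-Bx^\ast\in Ax^\ast$, I multiply the membership by $\gamma_k>0$ and apply the (injective, everywhere-defined) linear operator $U_k$ to both sides, obtaining the equivalent statement $-\gamma_k U_k Bx^\ast\in\gamma_k U_k Ax^\ast$; adding $x^\ast$ gives $x^\ast-\gamma_k U_k Bx^\ast\in(I+\gamma_k U_k A)x^\ast$, i.e. $x^\ast=J_{\gamma_k U_k A}(x^\ast-\gamma_k U_k Bx^\ast)=J_{\gamma_k U_k A}(I-\gamma_k U_k B)x^\ast$. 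Every step is reversible (the "only if" direction of the membership manipulation is where injectivity of $\gamma_k U_k$, guaranteed by $U_k\succeq\alpha I$, is used), so this is a genuine equivalence and covers both directions at once.

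For \emph{(ii)}$\Leftrightarrow$\emph{(iii)}: I apply Lemma \ref{variable-metric-maximal-monotone}\,(iii) with $U=\gamma_k U_k$ to write $J_{\gamma_k U_k A}=\bigl(\tfrac1{\gamma_k}U_k^{-1}+A\bigr)^{-1}\circ\tfrac1{\gamma_k}U_k^{-1}$, then substitute the argument $(I-\gamma_k U_k B)x^\ast$ and simplify $\tfrac1{\gamma_k}U_k^{-1}(x^\ast-\gamma_k U_k Bx^\ast)=\tfrac1{\gamma_k}U_k^{-1}x^\ast-Bx^\ast=\tfrac1{\gamma_k}(U_k^{-1}-\gamma_k B)x^\ast$. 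Pulling the scalar out of the inverse, $\bigl(\tfrac1{\gamma_k}U_k^{-1}+A\bigr)^{-1}=\bigl(\tfrac1{\gamma_k}(U_k^{-1}+\gamma_k A)\bigr)^{-1}=\gamma_k(U_k^{-1}+\gamma_k A)^{-1}$, and the two factors $\tfrac1{\gamma_k}$ and $\gamma_k$ cancel, so the right-hand side of \emph{(ii)} equals $(U_k^{-1}+\gamma_k A)^{-1}(U_k^{-1}-\gamma_k B)x^\ast$. Finally, in \emph{(iii)} the scalars $\alpha$ cancel, since $\bigl(\tfrac{U_k^{-1}+\gamma_k A}{\alpha}\bigr)^{-1}=\alpha(U_k^{-1}+\gamma_k A)^{-1}$ while the forward map $\tfrac{U_k^{-1}-\gamma_k B}{\alpha}$ carries the factor $\tfrac1\alpha$; thus \emph{(iii)} is literally the identity $x^\ast=(U_k^{-1}+\gamma_k A)^{-1}(U_k^{-1}-\gamma_k B)x^\ast$ just obtained, and \emph{(ii)}$\Leftrightarrow$\emph{(iii)} follows.

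I do not expect a substantive obstacle: the lemma is essentially a bookkeeping statement built on the variable-metric resolvent identities already in hand. The only points requiring care are the passage of set-valued memberships through the linear map $U_k$ (which needs injectivity, hence the hypothesis $\alpha>0$) and the correct shuffling of the scalars $\gamma_k$ and $\alpha$ in and out of operator inverses; both are routine once the invertibility of $U_k$ and Lemma \ref{variable-metric-maximal-monotone} are invoked.
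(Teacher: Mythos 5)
Your argument is correct in substance and follows essentially the same route as the paper: the equivalence \emph{(i)}$\Leftrightarrow$\emph{(ii)} is obtained by exactly the paper's chain of reversible membership manipulations (multiply by $\gamma_k$, apply the invertible $U_k$, add $x^{*}$, invert $I+\gamma_k U_k A$). For \emph{(ii)}$\Leftrightarrow$\emph{(iii)} the paper stays at the level of set memberships — it rewrites $x^{*}-\gamma_k U_k Bx^{*}\in x^{*}+\gamma_k U_kAx^{*}$ as $U_k^{-1}x^{*}-\gamma_k Bx^{*}\in U_k^{-1}x^{*}+\gamma_k Ax^{*}$ and then divides the inclusion by $\alpha$ — whereas you invoke Lemma \ref{variable-metric-maximal-monotone}\,(iii) and push scalars through operator inverses. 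That route reaches the right endpoint, but one intermediate identity you state is not valid for set-valued operators: $\bigl(\tfrac1{\gamma_k}D\bigr)^{-1}\neq\gamma_k D^{-1}$ in general; the correct rule is $\bigl(\lambda D\bigr)^{-1}=D^{-1}\circ\bigl(\tfrac1\lambda I\bigr)$, i.e.\ the scalar rescales the \emph{argument}, not the output. With that corrected identity your cancellations of $\gamma_k$ and of $\alpha$ still go through (the rescaling of the argument exactly absorbs the $\tfrac1{\gamma_k}$, resp.\ $\tfrac1\alpha$, carried by the forward map), so the conclusion stands; but as written the justification "pull the scalar out of the inverse" is wrong, and the paper's membership-level computation is the cleaner way to avoid this pitfall.
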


\begin{proof}
(i)$\Leftrightarrow$(ii) Let $x^{*} \in zer(A+B)$, then we have
\begin{align}
&\qquad 0\in \gamma_{k}Ax^{*}+ \gamma_{k}Bx^{*} \nonumber \\
&\Leftrightarrow 0\in \gamma_{k}U_{k}Ax^{*}+ \gamma_{k}U_{k}Bx^{*} \nonumber \\
&\Leftrightarrow x^*- \gamma_{k}U_{k}Bx^{*} \in x^{*}+ \gamma_{k}U_{k}Ax^{*} \nonumber \\
&\Leftrightarrow x^{*}=(I+\gamma_{k}U_{k}A)^{-1}(x^*-\gamma_{k}U_{k}Bx^{*})\nonumber \\
&\Leftrightarrow x^* = J_ {\gamma_{k}U_{k}A}(I - \gamma_{k}U_{k}B)(x^*)\nonumber
\end{align}
(ii)$\Leftrightarrow(iii)$ Let $x^{*}= J_{\gamma_{k}U_{k}A}(I-\gamma_{k}U_{k}B)x^{*}$, then
\begin{align}
&\qquad x^{*}-\gamma_{k}U_{k}Bx^{*}\in x^{*}+\gamma_{k}U_{k}Ax^{*}\nonumber \nonumber \\
&\Leftrightarrow U_{k}^{-1}x^{*}-\gamma_{k}Bx^{*}\in U_{k}^{-1}x^{*}+\gamma_{k}Ax^{*} \nonumber \\
&\Leftrightarrow (\frac{U_{k}^{-1}-\gamma_{k}B}{\alpha})x^{*} \in (\frac{U_{k}^{-1}+\gamma_{k}A}{\alpha})x^{*} \nonumber \\
&\Leftrightarrow x^{*} = (\frac{U_{k}^{-1}+\gamma_{k} A}{\alpha})^{-1} \circ (\frac{U_{k}^{-1}-\gamma_{k} B}{\alpha})x^{*} \nonumber.
\end{align}
\end{proof}

\begin{lemma}\label{our-lemma4}
Let $H$ be a real Hilbert space. Let $A:H \rightarrow 2^{H}$ be a maximal monotone operator. Let $B:H \rightarrow H$ be a $\beta$-cocoercive operator, for some $\beta > 0$. Let $r>0$ and $s>0$, and let $U, V\in \mathcal{P}_{\alpha}(H)$. Define a variable metric forward-backward operator $T_{r U} := J_{rUA}(I-rUB)$. Then, for any $x\in H$, we have
$$
\|T_{rU}x - T_{sV}x  \| \leq \frac{1}{\lambda_{min}(U^{-1})} \left\| \left (U^{-1} - \frac{r}{s}V^{-1} \right) (x- T_{sV}x) \right\|,
$$
where $\lambda_{min}(U^{-1})$ represents the minimum eigenvalue of $U^{-1}$.
\end{lemma}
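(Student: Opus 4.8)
The plan is to unwind the two resolvent applications into subdifferential-type inclusions for $A$, exploit the monotonicity of $A$, and observe that the forward term $Bx$ appears identically in both inclusions and therefore cancels. Concretely, write $p := T_{rU}x = J_{rUA}(I-rUB)x$ and $q := T_{sV}x = J_{sVA}(I-sVB)x$. Since $rU, sV \in \mathcal{P}_{\alpha'}(H)$ for suitable $\alpha'>0$, Lemma \ref{variable-metric-maximal-monotone}(i) guarantees $rUA$ and $sVA$ are maximal monotone, so $J_{rUA}$ and $J_{sVA}$ are everywhere defined, and from the definition of the resolvent,
\[
\tfrac{1}{r}U^{-1}(x-p) - Bx \in Ap, \qquad \tfrac{1}{s}V^{-1}(x-q) - Bx \in Aq .
\]

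Next I would apply monotonicity of $A$ to the pairs $(p, \tfrac{1}{r}U^{-1}(x-p)-Bx)$ and $(q, \tfrac{1}{s}V^{-1}(x-q)-Bx)$ in $\mathrm{gra}\,A$. The $-Bx$ terms cancel, leaving
\[
\Big\langle p-q,\ \tfrac{1}{r}U^{-1}(x-p) - \tfrac{1}{s}V^{-1}(x-q) \Big\rangle \geq 0 .
\]
Multiplying through by $r>0$ and substituting $x-p = (x-q) - (p-q)$ in the first term gives
\[
\big\langle p-q,\ U^{-1}(p-q)\big\rangle \ \leq\ \Big\langle p-q,\ \big(U^{-1} - \tfrac{r}{s}V^{-1}\big)(x-q) \Big\rangle .
\]

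To finish, I would bound the left-hand side below using $\langle p-q, U^{-1}(p-q)\rangle = \|p-q\|_{U^{-1}}^2 \geq \lambda_{\min}(U^{-1})\|p-q\|^2$ (valid since $U^{-1}\in S(H)$ is positive definite), and bound the right-hand side above by Cauchy--Schwarz, obtaining
\[
\lambda_{\min}(U^{-1})\,\|p-q\|^2 \ \leq\ \|p-q\|\cdot\Big\|\big(U^{-1} - \tfrac{r}{s}V^{-1}\big)(x-q)\Big\| .
\]
If $p=q$ the claimed inequality is trivial; otherwise divide both sides by $\|p-q\|>0$ and substitute back $p = T_{rU}x$, $q = T_{sV}x$ to get the desired estimate.

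I do not anticipate a serious obstacle here: cocoercivity of $B$ is not even needed (only that $B$ is single-valued and evaluated at the common point $x$ in both iterations), and no variable-metric machinery beyond the maximal monotonicity of $rUA$ and the elementary spectral bound $\langle \cdot, U^{-1}\cdot\rangle \geq \lambda_{\min}(U^{-1})\|\cdot\|^2$ is required. The only point deserving a little care is the bookkeeping when passing from the resolvent identity to the inclusions, and making sure the factor $r$ (rather than $s$) is the one carried through so that the combination $U^{-1} - \tfrac{r}{s}V^{-1}$ emerges exactly as stated.
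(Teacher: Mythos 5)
Your proposal is correct and follows essentially the same route as the paper: unwind both resolvents into inclusions in $\operatorname{gra}A$ (where the common $-Bx$ terms cancel), apply monotonicity of $A$, rewrite $x-T_{rU}x$ in terms of $x-T_{sV}x$ to isolate $\|T_{rU}x-T_{sV}x\|_{U^{-1}}^{2}$ on the left, and finish with Cauchy--Schwarz together with the bound $\lambda_{\min}(U^{-1})\|\cdot\|^{2}\leq\|\cdot\|_{U^{-1}}^{2}$. Your side remark that cocoercivity of $B$ plays no role here is also accurate and consistent with the paper's argument.
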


\begin{proof}
Let $x\in H$, in which case we have
\begin{align*}
& \frac{U^{-1}x-U^{-1}T_{rU}x}{r} - Bx \in A T_{rU}x, \\
& \frac{V^{-1}x-V^{-1}T_{sV}x}{s} - Bx \in A T_{sV}x.
\end{align*}
It follows from the monotonicity of operator $A$ that
$$
\left\langle T_{rU}x - T_{sV}x,  \frac{U^{-1}x-U^{-1}T_{rU}x}{r} - \frac{V^{-1}x-V^{-1}T_{sV}x}{s}   \right\rangle \geq 0.
$$
Then
$$
\| T_{rU}x - T_{sV}x \|_{U^{-1}}^{2} \leq r \left\langle T_{rU}x - T_{sV}x, \left( \frac{U^{-1}}{r} - \frac{V^{-1}}{s} \right)(x-T_{sV}x)   \right \rangle.
$$
Because of the Cauchy-Schwarz inequality and the fact that $\lambda_{min}(U^{-1})\|x\|^2 \leq \|x\|_{U^{-1}}^{2}$, for any $x\in H$, we obtain
$$
\|T_{rU}x - T_{sV}x  \| \leq \frac{1}{\lambda_{min}(U^{-1})} \left\| \left (U^{-1} - \frac{r}{s}V^{-1} \right) (x- T_{sV}x) \right\|.
$$

\end{proof}

We are ready to state our main theorems and present their convergence analysis.

\begin{theorem}\label{main-theorem}
Let $H$ be a real Hilbert space. Let $A:H \rightarrow 2^{H}$ be maximal monotone. Let $B:H\rightarrow H$ be $\beta$-cocoercive, for some $\beta> 0$. Suppose that $\Omega:=zer(A+B)\neq \emptyset$. Let $\alpha> 0$, $\{\eta_{k}\}\in \ell_{+}^{1}(\mathbb{N})$, and $\{U_{k}\}\subset \mathcal{P}_{\alpha}(H)$ such that
\begin{equation}\label{u-condition}
\mu = \sup_{k\in \mathbb{N}} \left\|U_{k}\right\| < +\infty  \quad \textrm{and} \quad   ( 1+ \eta_{k}) U_{k+1} \succeq U_{k}, \quad \forall k \in \mathbb{N}.
\end{equation}
Let $\{\gamma_{k}\} \subset (0, \frac{2\beta}{\|U_k\|})$, and $\{\lambda_k\} \subset (0,\frac{1}{\alpha_{k}})$, where $\alpha_{k}=\frac{2 \beta}{4 \beta-\gamma_{k} \|U_k\|}$.
Let $\{a_{k}\}$ and $\{b_{k}\}$ be two sequences in $H$ such that $\sum_{k=0}^{+\infty}\lambda_k \|a_k\|<+\infty$ and $\sum_{k=0}^{+\infty}\lambda_k \|b_k\|<+\infty$. Let $x_{0}\in H$, and set
\begin{equation}\label{variable-forward-backward2}
\left\{
\begin{aligned}
y_{k}   & =  x_{k}-\gamma_{k}U_{k}(Bx_{k}+b_{k}), \\
x_{k+1} & =  x_{k}+\lambda_{k}(J_ {\gamma_{k}U_{k}A}(y_{k})+a_{k}-x_{k}),
\end{aligned}
\right.
\end{equation}

Then, we have

\noindent \emph{(i)} For any $x^{*}\in \Omega$, $\lim_{k\rightarrow +\infty}\|x_{k}-x^{*}\|_{U_{k}^{-1}}$ exists;

Suppose that $0< \underline{\lambda}\leq \lambda_{k} \leq \frac{1}{\alpha_{k}}-\tau$, where $\tau \in (0,\frac{1}{\alpha_{k}}-\underline{\lambda})$, then

\noindent \emph{(ii)} $\lim_{k\rightarrow +\infty} \| x_{k} - J_{\gamma_k U_k A}(x_k - \gamma_k U_k Bx_k) \| =0 $;

Suppose that $0< \underline{\gamma}\leq \gamma_{k}$, then

\noindent \emph{(iii)}  $\{x_{k}\}$ converges weakly to a point in $\Omega$.

Further, suppose that $ \gamma_{k} \leq \frac{2\beta-\epsilon}{\mu}$, where $\epsilon \in (0,2\beta-\mu\underline{\gamma})$. Then

\noindent \emph{(iv)} $Bx_k \rightarrow Bx^{*}$ as $k\rightarrow +\infty$, where $x^{*}\in \Omega$.

\end{theorem}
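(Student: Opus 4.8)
The plan is to realise the recursion \eqref{variable-forward-backward2} as a relaxation, in the shifting metrics $\|\cdot\|_{U_k^{-1}}$, of the variable metric forward--backward operator $T_k:=J_{\gamma_k U_k A}(I-\gamma_k U_k B)$, and then to carry out a Fej\'er-type analysis. By Lemma~\ref{our-lemma2}, $T_k$ is $\alpha_k$-averaged on $H_{U_k^{-1}}$; writing $T_k=(1-\alpha_k)I+\alpha_k S_k$ with $S_k$ nonexpansive, the relaxed operator $R_k:=(1-\lambda_k)I+\lambda_k T_k=(1-\lambda_k\alpha_k)I+\lambda_k\alpha_k S_k$ is $\lambda_k\alpha_k$-averaged on $H_{U_k^{-1}}$ precisely because $\lambda_k\in(0,1/\alpha_k)$, and $\mathrm{Fix}(R_k)=\mathrm{Fix}(T_k)=\Omega$ by Lemma~\ref{our-lemma3}. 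Writing $z_k:=J_{\gamma_k U_k A}(x_k-\gamma_k U_k Bx_k)$, one checks $x_{k+1}=R_kx_k+\lambda_k w_k$, where $w_k$ gathers the error $a_k$ and the perturbation caused by replacing $Bx_k$ with $Bx_k+b_k$ inside the resolvent; the nonexpansiveness of $J_{\gamma_k U_k A}$ on $H_{U_k^{-1}}$ together with $\alpha I\preceq U_k$, $\|U_k\|\le\mu$ and $\gamma_k<2\beta/\|U_k\|$ yields $\varepsilon_k:=\lambda_k\|w_k\|_{U_k^{-1}}\le\mathrm{const}\cdot\lambda_k(\|a_k\|+\|b_k\|)\in\ell^{1}_{+}(\mathbb N)$. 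Since $R_k$ is nonexpansive and fixes $x^{\ast}\in\Omega$, $\|x_{k+1}-x^{\ast}\|_{U_k^{-1}}\le\|x_k-x^{\ast}\|_{U_k^{-1}}+\varepsilon_k$, while $(1+\eta_k)U_{k+1}\succeq U_k$ gives $U_{k+1}^{-1}\preceq(1+\eta_k)U_k^{-1}$ and hence $\|x_{k+1}-x^{\ast}\|_{U_{k+1}^{-1}}\le(1+\eta_k)\|x_k-x^{\ast}\|_{U_k^{-1}}+\sqrt{1+\eta_k}\,\varepsilon_k$. Applying Lemma~\ref{conergence-lemma1} with $W_k=U_k^{-1}\in\mathcal P_{1/\mu}(H)$ gives boundedness of $\{x_k\}$ and the existence of $\lim_k\|x_k-x^{\ast}\|_{U_k^{-1}}$, i.e. (i).

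For (ii), I would sharpen the previous estimate by applying Proposition~\ref{prop2} to the $\lambda_k\alpha_k$-averaged operator $R_k$: since $(I-R_k)x_k=\lambda_k(x_k-z_k)$ and $R_kx^{\ast}=x^{\ast}$, one gets $\|R_kx_k-x^{\ast}\|_{U_k^{-1}}^{2}\le\|x_k-x^{\ast}\|_{U_k^{-1}}^{2}-\tfrac{1-\lambda_k\alpha_k}{\lambda_k\alpha_k}\lambda_k^{2}\|x_k-z_k\|_{U_k^{-1}}^{2}$. Squaring $\|x_{k+1}-x^{\ast}\|_{U_k^{-1}}\le\|R_kx_k-x^{\ast}\|_{U_k^{-1}}+\varepsilon_k$, using boundedness of $\{x_k\}$, and then the metric change, the recursion becomes $a_{k+1}\le(1+\eta_k)a_k-c\,\lambda_k^{2}\|x_k-z_k\|_{U_k^{-1}}^{2}+\delta_k$, where $a_k:=\|x_k-x^{\ast}\|_{U_k^{-1}}^{2}$, $\delta_k\in\ell^{1}_{+}(\mathbb N)$, and $c>0$ is a lower bound for $\tfrac{1-\lambda_k\alpha_k}{\lambda_k\alpha_k}$ that exists because $\lambda_k\le\tfrac1{\alpha_k}-\tau$ keeps $\lambda_k\alpha_k$ uniformly below $1$. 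Since $\{a_k\}$ converges by (i) and $\sum_k\eta_k a_k<\infty$, telescoping yields $\sum_k\lambda_k^{2}\|x_k-z_k\|_{U_k^{-1}}^{2}<\infty$; with $\lambda_k\ge\underline\lambda>0$ and $\|\cdot\|_{U_k^{-1}}^{2}\ge\mu^{-1}\|\cdot\|^{2}$ this forces $\|x_k-z_k\|\to0$, which is (ii).

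For (iii), I would verify the hypotheses of Lemma~\ref{convergence-lemma2} for $W_k=U_k^{-1}$: $\sup_k\|U_k^{-1}\|\le\alpha^{-1}$ and $(1+\eta_k)U_k^{-1}\succeq U_{k+1}^{-1}$ give $W_k\to W$ pointwise, and \eqref{lemma-inequality} was obtained in (i). It then suffices to show that every weak sequential cluster point $\bar x$ of $\{x_k\}$ lies in $\Omega$. Fix $r\in(0,2\beta)$ and set $T:=J_{rA}(I-rB)$; by Lemma~\ref{our-lemma2} (with $U=I$), $T$ is averaged, hence nonexpansive, and $\mathrm{Fix}(T)=\Omega$ by Lemma~\ref{our-lemma3}. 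Applying Lemma~\ref{our-lemma4} to the pair $(r,I)$ against $(\gamma_k,U_k)$ bounds $\|Tx_k-z_k\|$ by $\|(I-\tfrac{r}{\gamma_k}U_k^{-1})(x_k-z_k)\|\le(1+\tfrac{r}{\underline\gamma\,\alpha})\|x_k-z_k\|$, which tends to $0$ by (ii) (here $\gamma_k\ge\underline\gamma$ enters); together with $\|x_k-z_k\|\to0$ this gives $\|(I-T)x_k\|\to0$. Passing to a subsequence $x_{k_j}\rightharpoonup\bar x$ and invoking the Demiclosedness Principle, $\bar x\in\mathrm{Fix}(T)=\Omega$. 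Lemma~\ref{convergence-lemma2} then yields weak convergence of $\{x_k\}$ to a point of $\Omega$.

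For (iv) I would drop $R_k$ and use cocoercivity directly: since $J_{\gamma_k U_k A}$ is firmly nonexpansive on $H_{U_k^{-1}}$ and fixes $x^{\ast}$, expanding $\|z_k-x^{\ast}\|_{U_k^{-1}}^{2}$ and using $\langle x_k-x^{\ast},\,Bx_k-Bx^{\ast}\rangle\ge\beta\|Bx_k-Bx^{\ast}\|^{2}$ together with $\langle U_k(Bx_k-Bx^{\ast}),\,Bx_k-Bx^{\ast}\rangle\le\mu\|Bx_k-Bx^{\ast}\|^{2}$ gives $\|z_k-x^{\ast}\|_{U_k^{-1}}^{2}\le\|x_k-x^{\ast}\|_{U_k^{-1}}^{2}-\gamma_k(2\beta-\gamma_k\mu)\|Bx_k-Bx^{\ast}\|^{2}$, and the extra hypothesis $\gamma_k\le(2\beta-\epsilon)/\mu$ with $\gamma_k\ge\underline\gamma$ makes $\gamma_k(2\beta-\gamma_k\mu)\ge\underline\gamma\,\epsilon>0$. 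Expanding the error-free part of $x_{k+1}-x^{\ast}=(1-\lambda_k)(x_k-x^{\ast})+\lambda_k(z_k-x^{\ast})$ via Lemma~\ref{lemma-equation} produces a term $-\lambda_k(1-\lambda_k)\|x_k-z_k\|_{U_k^{-1}}^{2}$ which, when $\lambda_k>1$, is positive but bounded by $\lambda_k^{2}\|x_k-z_k\|_{U_k^{-1}}^{2}$, hence summable by (ii); after absorbing the $a_k,b_k$ errors as in (i) this gives $a_{k+1}\le(1+\eta_k)a_k-\underline\gamma\,\epsilon\,\lambda_k\|Bx_k-Bx^{\ast}\|^{2}+(\text{summable})$, and telescoping (using that $\{a_k\}$ converges) yields $\sum_k\lambda_k\|Bx_k-Bx^{\ast}\|^{2}<\infty$, whence $Bx_k\to Bx^{\ast}$. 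I expect the principal difficulty to be the regime $\lambda_k>1$, which destroys the usual convexity and firm-nonexpansiveness arguments; the remedies are to transfer the relaxation into the averagedness constant of $R_k$ (parts (i)--(iii)) and to absorb the resulting wrong-sign term using the summability obtained in (ii) (part (iv)). A secondary point is that every estimate must be kept in the exact form required by Lemmas~\ref{conergence-lemma1} and~\ref{convergence-lemma2}, which is precisely what conditions~\eqref{u-condition} are designed to guarantee.
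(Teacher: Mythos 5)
Your proposal is correct and follows the same overall skeleton as the paper's proof: decompose the iterate into an exact relaxed forward--backward step plus a summable error, exploit the $\alpha_k$-averagedness of $T_k=J_{\gamma_k U_k A}(I-\gamma_k U_k B)$ from Lemma~\ref{our-lemma2} to get the quasi-Fej\'er inequality and the coefficient $\lambda_k(\tfrac{1}{\alpha_k}-\lambda_k)$ in front of $\|x_k-T_kx_k\|_{U_k^{-1}}^2$, then demiclosedness plus Lemma~\ref{convergence-lemma2} for (iii), and firm nonexpansiveness of $J_{\gamma_kU_kA}$ plus cocoercivity for (iv). Two of your steps differ in execution, both harmlessly and one to your advantage. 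In (ii) you apply Proposition~\ref{prop2} directly to the $\lambda_k\alpha_k$-averaged relaxed operator, whereas the paper combines Lemma~\ref{lemma-equation} with Proposition~\ref{prop2} applied to $T_k$; since $\frac{1-\lambda_k\alpha_k}{\lambda_k\alpha_k}\lambda_k^2=\lambda_k(\tfrac{1}{\alpha_k}-\lambda_k)$, the two computations produce the identical inequality, and your summed version $\sum_k\lambda_k(\tfrac{1}{\alpha_k}-\lambda_k)\|x_k-z_k\|_{U_k^{-1}}^2<+\infty$ is exactly what the paper records in its second theorem. The genuine divergence is in (iii): the paper extracts a subsequence with $\gamma_{k_n}\to\gamma$ and invokes the pointwise limit $U_k^{-1}\to U^{-1}$ so as to compare $T_{k_n}$ with $J_{\gamma UA}(I-\gamma UB)$ via Lemma~\ref{our-lemma4}, while you compare against the single fixed operator $J_{rA}(I-rB)$ with $U=I$ and an arbitrary $r\in(0,2\beta)$. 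Your variant is cleaner --- it removes the subsequence extraction for $\{\gamma_k\}$ and the dependence of the demiclosedness step on the limit metric $U^{-1}$ --- and the estimate $\|(I-\tfrac{r}{\gamma_k}U_k^{-1})(x_k-z_k)\|\leq(1+\tfrac{r}{\underline{\gamma}\alpha})\|x_k-z_k\|\to 0$ that powers it is exactly where the hypothesis $\gamma_k\geq\underline{\gamma}$ enters, as in the paper. Part (iv) is the paper's argument with a telescoping sum in place of a direct passage to the limit; your observation that the wrong-sign term $\lambda_k(\lambda_k-1)\|x_k-z_k\|_{U_k^{-1}}^2$ arising when $\lambda_k>1$ is dominated by the summable quantity from (ii) is precisely how the extended relaxation range is absorbed.
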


\begin{proof}
According to condition (\ref{u-condition}), we have
\begin{equation}
\|U_{k}^{-1}\| \leq \frac{1}{\alpha},\quad U_{k}^{-1}\in \mathcal{P}_{\frac{1}{\mu}}(H), \quad \textrm { and } (1+\eta_k)U_{k}^{-1} \succeq U_{k+1}^{-1}.
\end{equation}
Hence,
\begin{equation}\label{u-inequality}
( 1+ \eta_{k}) \left\|x\right\|_{U_{k}^{-1}}^{2} \geq \left\|x\right\|_{U_{k+1}^{-1}}^{2},  \quad \forall x\in H.
\end{equation}

For the sake of convenience, let
\begin{equation}\label{eq3-1}
\overline{x}_{k+1} = x_{k}+\lambda_{k}(J_ {\gamma_{k}U_{k}A}(x_{k}-\gamma_{k}U_{k}Bx_{k})-x^{k}).
\end{equation}
Then, iterative scheme (\ref{variable-forward-backward2}) can be rewritten as
\begin{equation}\label{eq3-2}
x_{k+1} = \overline{x}_{k+1} + \lambda_k e_k,
\end{equation}
where $e_k = J_ {\gamma_{k}U_{k}A}(y_k) - J_ {\gamma_{k}U_{k}A}(x_{k}-\gamma_{k}U_{k}Bx_{k}) + a_k$ such that $\sum_{k=0}^{+\infty}\lambda_k \|e_k\|<+\infty$. In fact, because $J_{\gamma_k U_k A}$ is nonexpansive on $H_{U_{k}^{-1}}$, we have
\begin{align}\label{error-estimation}
\lambda_k \|e_k\| & \leq \sqrt{\mu} \lambda_k \|e_k\|_{U_{k}^{-1}} \nonumber \\
& \leq \sqrt{\mu} \lambda_k \| y_k - (x_{k}-\gamma_{k}U_{k}Bx_{k}) \|_{U_{k}^{-1}} + \sqrt{\mu} \lambda_k \|a_k\|_{U_{k}^{-1}} \nonumber \\
& \leq \mu \gamma_k \lambda_k \|b_k\| +  \sqrt{\frac{1}{\alpha}} \lambda_k \|a_k\| \nonumber \\
& \leq \mu \frac{2\beta}{\alpha} \lambda_k \|b_k\| +  \sqrt{\frac{1}{\alpha}} \lambda_k \|a_k\|.
\end{align}
Notice that $\sum_{k=0}^{+\infty}\lambda_k \|a_k\|<+\infty$ and $\sum_{k=0}^{+\infty}\lambda_k \|b_k\|<+\infty$, (\ref{error-estimation}) implies that $\sum_{k=0}^{+\infty}\lambda_k \|e_k\|<+\infty$.

From Lemma \ref{our-lemma2}, we know that $J_ {\gamma_{k}U_{k}A}(I-\gamma_{k}U_{k}B)$ is $\frac{2\beta}{4\beta - \gamma_k \|U_k\|}$-averaged. Let $\alpha_k = \frac{2\beta}{4\beta - \gamma_k \|U_k\|}$, then there exist nonexpansive mappings $R_k$ such that $J_ {\gamma_{k}U_{k}A}(I-\gamma_{k}U_{k}B) = (1-\alpha_k)I + \alpha_k R_k$. Consequently, the iterative sequence $\{\overline{x}_{k+1}\}$ in (\ref{eq3-1}) is equivalent to
\begin{align}\label{eq3-3}
\overline{x}_{k+1} & = (1-\lambda_k)x_k + \lambda_k ( (1-\alpha_k)x_k + \alpha_k R_k x_k  ) \nonumber \\
& = (1-\lambda_k \alpha_k)x_k + \lambda_k \alpha_k R_k x_k.
\end{align}

(i)  Let $x^{*}\in zer(A+B)$, according to Lemma \ref{our-lemma3}, $x^{*} = J_ {\gamma_{k}U_{k}A}(I - \gamma_{k}U_{k}B)(x^{*})$. Then $x^* = R_k x^*$.
From (\ref{u-inequality}), (\ref{eq3-2}), and (\ref{eq3-3}), we obtain
\begin{align}\label{eq3-4}
\|x_{k+1} - x^* \|_{U_{k+1}^{-1}} & \leq \sqrt{(1+\eta_k)}\| x_{k+1} -x^* \|_{U_{k}^{-1}} \nonumber \\
&  \leq \sqrt{(1+\eta_k)} ( \| \overline{x}_{k+1} - x^* \|_{U_{k}^{-1}} + \lambda_k \| e_k \|_{U_{k}^{-1}}  ) \nonumber \\
& \leq \sqrt{(1+\eta_k)} \| (1-\lambda_k \alpha_k )(x_k - x^*) + \lambda_k \alpha_k (R_k x_k - x^*)   \|_{U_{k}^{-1}} \nonumber \\
& \quad + \sqrt{(1+\eta_k)} \sqrt{\frac{1}{\alpha}}\lambda_k \|e_k\| \nonumber \\
& \leq (1+\eta_k) \|x_k - x^*\|_{U_{k}^{-1}} + \epsilon_k,
\end{align}
where $\epsilon_k = \sqrt{(1+\eta_k)}\sqrt{\frac{1}{\alpha}}\lambda_k \|e_k\|$. Because $\sum_{k=0}^{+\infty}\lambda_k \|e_k\| < +\infty$ and $\sum_{k=0}^{+\infty}\|\eta_k\| < +\infty$, then $\sum_{k=0}^{\infty}\|\epsilon_k\|<+\infty$. On the basis of Lemma \ref{conergence-lemma1}, we conclude that $\lim_{k\rightarrow +\infty}\|x_k - x^*\|_{U_{k}^{-1}}$ exists. Moreover, $\{\|x^k - x^*\|\}$ is bounded. Let $M>0$ such that $\sup_{k\geq 0}\|x^k - x^*\| \leq M$.

(ii) With the help of the inequality $\|x+y\|^2 \leq \|x\|^2 + 2\langle y, x+y\rangle$, $\forall x,y\in H$. We obtain
\begin{align}\label{eq3-5}
\|x_{k+1} - x^* \|_{U_{k+1}^{-1}}^2 & \leq (1+\eta_k) \| x_{k+1} -x^* \|_{U_{k}^{-1}}^2 \nonumber \\
& = (1+\eta_k) \| \overline{x}_{k+1} -x^* + \lambda_k e_k  \|_{U_{k}^{-1}}^{2} \nonumber \\
& = (1+\eta_k) ( \| \overline{x}_{k+1} -x^* \|_{U_{k}^{-1}}^{2} + 2\lambda_k \langle e_k, x_{k+1}-x^*  \rangle_{U_{k}^{-1}} ) \nonumber \\
& \leq (1+\eta_k) \| \overline{x}_{k+1} -x^* \|_{U_{k}^{-1}}^{2} + 2(1+\eta_k)M \|U_{k}^{-1}\| \lambda_k\|e_k\|.
\end{align}

From Lemma \ref{lemma-equation} and (\ref{eq3-1}) we derive that
\begin{align}\label{eq3-6}
\left\|\overline{x}_{k+1}-x^{*}\right\|_{U_{k}^{-1}}^{2} &= \left\|(1-\lambda_{k})(x_{k}-x^{*})+\lambda_{k}(J_{\gamma_{k}U_{k}A}(x_{k}
-\gamma_{k}U_{k}Bx_{k})-x^{*})\right\|_{U_{k}^{-1}}^{2} \nonumber  \\
&=(1-\lambda_{k})\left\|x_{k}-x^{*}\right\|_{U_{k}^{-1}}^{2}
+\lambda_{k}\left\|(J_{\gamma_{k}U_{k}A}(x_{k}-\gamma_{k}U_{k}Bx_{k})-x^{*})\right\| _{U_{k}^{-1}}^{2}  \nonumber \\
&\quad-\lambda_{k}(1-\lambda_{k})\left\|x_{k}-J_{\gamma_{k}U_{k}A}(x_{k}-\gamma_{k}U_{k}Bx_{k})\right\|_{U_{k}^{-1}}^{2}.
\end{align}
Because $J_{\gamma_{k}U_{k}A}(I-\gamma_{k}U_{k}B)$ is $\alpha_{k}$-averaged, it follows from Proposition \ref{prop2} that
\begin{equation}\label{eq3-7}
\left\|J_{\gamma_{k}U_{k}A}(x_{k}-\gamma_{k}U_{k}Bx_{k})-x^{*}\right\|_{U_{k}^{-1}}^{2}\leq\left\|x_{k}-x^{*}\right\|_{U_{k}^{-1}}^{2}-
\frac{1-\alpha_{k}}{\alpha_{k}}\left\|x_{k}-J_{\gamma_{k}U_{k}A}(x_{k}-\gamma_{k}U_{k}Bx_{k})\right\|_{U_{k}^{-1}}^{2}.
\end{equation}
Substituting (\ref{eq3-7}) into (\ref{eq3-6}) yields,
\begin{equation}\label{eq3-8}
\left\|\bar{x}_{k+1}-x^{*}\right\|_{U_{k}^{-1}}^{2} \leq \left\|x_{k}-x^{*}\right\|_{U_{k}^{-1}}^{2}-\lambda_{k}(\frac{1}{\alpha_{k}}-\lambda_{k})
\left\|x_{k}-J_{\gamma_{k}U_{k}A}(x_{k}-\gamma_{k}U_{k}Bx_{k})\right\|_{U_{k}^{-1}}^{2}.
\end{equation}
Combining (\ref{eq3-8}) with (\ref{eq3-5}), we obtain
\begin{align}\label{eq3-9}
\|x_{k+1} - x^* \|_{U_{k+1}^{-1}}^2 & \leq (1+\eta_k) \left\|x_{k}-x^{*}\right\|_{U_{k}^{-1}}^2 + 2(1+\eta_k)M \|U_{k}^{-1}\| \lambda_k \|e_k\| \nonumber \\
& \quad - (1+\eta_k)\lambda_{k}(\frac{1}{\alpha_{k}}-\lambda_{k})
\left\|x_{k}-J_{\gamma_{k}U_{k}A}(x_{k}-\gamma_{k}U_{k}Bx_{k})\right\|_{U_{k}^{-1}}^{2},
\end{align}
which implies  that
\begin{align}\label{eq3-10}
& \quad \lambda_{k} (\frac{1}{\alpha_{k}}-\lambda_{k})\left\|x_{k}- J_{\gamma_ {k}U_{k}A}(x_{k}-\gamma_{k}U_{k}Bx_{k})\right\|_{U_{k}^{-1}}^{2} \nonumber \\
& \leq (1+\eta_k)\left\|x_{k}-x^{*}\right\|_{U_{k}^{-1}}^{2}-\left\|x_{k+1}-x^{*}\right\|_{U_{k+1}^{-1}}^{2} + 2(1+\eta_k)M \|U_{k}^{-1}\|  \lambda_k \|e_k\|.
\end{align}
Observe that $\lim_{k\rightarrow +\infty}\left\|x_{k}-x^{*}\right\|_{U_{k}^{-1}}$ exists and $\sum_{k=0}^{+\infty}\lambda_k \|e_k\| < +\infty$. Then by letting $k\rightarrow +\infty$ in the above inequality and considering the condition on $\{\lambda_k\}$,  we obtain
\begin{equation}\label{eq3-11}
\lim_{k\to +\infty}\left\|x_{k}- J_{\gamma_ {k}U_{k}A}(x_{k}-\gamma_{k}U_{k}Bx_{k})\right\|_{U_{k}^{-1}}=0.
\end{equation}
Because the two norms $\|\cdot\|_{U_{k}^{-1}}$ and $\|\cdot\|$ defined on the Hilbert spaces $H$ are equivalent, it follows from (\ref{eq3-11}) that
\begin{equation}\label{eq3-112}
\lim_{k\to +\infty}\left\|x_{k}- J_{\gamma_ {k}U_{k}A}(x_{k}-\gamma_{k}U_{k}Bx_{k})\right\|=0.
\end{equation}

(iii) In this part, we prove that the sequence $\{x_k\}$ converges weakly to a point in $\Omega$. In fact, let $\bar{x}$ be a weak sequential cluster point of $ \{x_{k}\}$,  then there exists a subsequence $\{x_{k_{n}}\}\subset\{x_{k}\}$ such that
$x_{k_{n}}\rightharpoonup \bar{x} $. Because $\{\gamma_k\} \subset (\underline{\gamma},\frac{2\beta}{\|U_k\|}) \subset (\underline{\gamma},\frac{2\beta}{\alpha})$ is bounded, there exists a subsequence of $\{\gamma_k\}$ converges to $\gamma \in (\underline{\gamma},\frac{2\beta}{\alpha})$. Without loss of generality, we may assume that $\gamma_{k_n}\rightarrow \gamma$. According to condition (\ref{u-condition}), it follows from Lemma \ref{convergence-lemma2} that there exists $U^{-1}\in \mathcal{P}_{\frac{1}{\mu}}(H)$ such that $U_{k}^{-1}\rightarrow U^{-1}$ pointwise.

With the help of Lemma \ref{our-lemma4}, we make the following estimation,
\begin{align}\label{asympto-estimation}
& \quad \| x_{k_n} - J_{\gamma UA}(x_{k_n} - \gamma U Bx_{k_n} )   \| \nonumber \\
& \leq \| x_{k_n} - J_{\gamma_{k_n} U_{k_n}A}(x_{k_n} - \gamma_{k_n} U_{k_n} Bx_{k_n} )   \|   \nonumber \\
& \quad + \| J_{\gamma_{k_n} U_{k_n}A}(x_{k_n} - \gamma_{k_n} U_{k_n} Bx_{k_n} ) - J_{\gamma UA}(x_{k_n} - \gamma U Bx_{k_n} )   \| \nonumber \\
& \leq \| x_{k_n} - J_{\gamma_{k_n} U_{k_n}A}(x_{k_n} - \gamma_{k_n} U_{k_n} Bx_{k_n} )   \| \nonumber \\
& \quad + \frac{1}{\lambda_{min}(U_{k}^{-1})} \| \left( U_{k_n}^{-1} - \frac{\gamma_{k_{n}}}{\gamma}U^{-1}   \right)(x_{k_n}-J_{\gamma UA}(x_{k_n} - \gamma U Bx_{k_n} ))  \| \nonumber \\
& \leq \| x_{k_n} - J_{\gamma_{k_n} U_{k_n}A}(x_{k_n} - \gamma_{k_n} U_{k_n} Bx_{k_n} )   \| \nonumber \\
& \quad + \frac{\mu}{\gamma} \| (U_{k_n}^{-1}\gamma - U_{k_n}^{-1}\gamma_{k_n})(x_{k_n}-J_{\gamma UA}(x_{k_n} - \gamma U Bx_{k_n} ))  \| \nonumber \\
& \quad + \frac{\mu}{\gamma} \| ( U_{k_n}^{-1}\gamma_{k_n} - U^{-1}\gamma_{k_n})(x_{k_n}-J_{\gamma UA}(x_{k_n} - \gamma U Bx_{k_n} ))  \| \nonumber \\
& \leq \| x_{k_n} - J_{\gamma_{k_n} U_{k_n}A}(x_{k_n} - \gamma_{k_n} U_{k_n} Bx_{k_n} )   \| \nonumber \\
& \quad + \frac{\mu}{\gamma \alpha}  | \gamma - \gamma_{k_n}| \|x_{k_n}-J_{\gamma UA}(x_{k_n} - \gamma U Bx_{k_n} ) \| \nonumber \\
& \quad + \frac{\mu}{\gamma} \frac{2\beta}{\alpha} \| ( U_{k_n}^{-1} - U^{-1})(x_{k_n}-J_{\gamma UA}(x_{k_n} - \gamma U Bx_{k_n} ))  \| .
\end{align}
Because $\{\|x_{k_n}-J_{\gamma UA}(x_{k_n} - \gamma U Bx_{k_n} ) \|\}$ is bounded,  it follows from the conditions above, and we can conclude from (\ref{asympto-estimation})  that
\begin{equation}
\| x_{k_n} - J_{\gamma UA}(x_{k_n} - \gamma U Bx_{k_n} )   \| \rightarrow 0 \textrm{ as } k_n \rightarrow +\infty.
\end{equation}
As $J_{\gamma UA}(I - \gamma U B )$ is nonexpansive, based on the demiclosedness property of nonexpansive mapping, we deduce that $\bar{x} = J_{\gamma UA}(\bar{x} - \gamma U B\bar{x} )$, which means that $\bar{x} \in \textrm{ zer } (A+B)$. Because $\bar{x}$ is arbitrary, together with conclusion (i), we can conclude from Lemma \ref{convergence-lemma2} that $\{x_{k}\}$ converges weakly to a point in zer$(A+B)$.

(iv) On the other hand, as $J_{\gamma_ {k}U_{k}A}$ is firmly nonexpansive, it follows that we have
\begin{align}\label{eq3-12}
& \quad \left\|J_{\gamma_{k}U_{k}A}(x_{k}-\gamma_{k}U_{k}Bx_{k})-x^{*}\right\|_{U_{k}^{-1}}^{2} \nonumber \\
& \leq \left\|x_{k}-\gamma_{k}U_{k}Bx_{k}-(x^{*}-\gamma_{k}U_{k}Bx^{*})\right\|_{U_{k}^{-1}}^{2} \nonumber \\
&\quad - \left\|(I-J_{\gamma_ {k}U_{k}A})(x_{k}-\gamma_{k}U_{k}Bx_{k})-(I-J_{\gamma_ {k}U_{k}A})(x^{*}-\gamma_{k}U_{k}Bx^{*})\right\|_{U_{k}^{-1}}^{2} \nonumber \\
&=\left\|x_{k}-x^{*}-(\gamma_{k}U_{k}Bx_{k}-\gamma_{k}U_{k}Bx^{*})\right\|_{U_{k}^{-1}}^{2} \nonumber \\
& \quad-\left\|x_{k}-J_{\gamma_ {k}U_{k}A}(x_{k}-\gamma_{k}U_{k}Bx_{k})-(\gamma_ {k}U_{k}Bx_{k}-\gamma_{k}U_{k}Bx^{*})\right\|_{U_{k}^{-1}}^{2} \nonumber \\
&=\left\|x_{k}-x^{*}\right\|_{U_{k}^{-1}}^{2}-2 \langle x_{k}-x^{*},\gamma_{k}U_{k}Bx_{k}-\gamma_{k}U_{k}Bx^{*}\rangle_{U_{k}^{-1}} \nonumber \\
&\quad +\left\|\gamma_{k}U_{k}Bx_{k}-\gamma_{k}U_{k}Bx^{*}\right\|_{U_{k}^{-1}}^{2}\nonumber \\
& \quad-\left\|x_{k}-J_{\gamma_ {k}U_{k}A}(x_{k}-\gamma_{k}U_{k}Bx_{k})-(\gamma_ {k}U_{k}Bx_{k}-\gamma_{k}U_{k}Bx^{*})\right\|_{U_{k}^{-1}}^2.
\end{align}
Because $B$ is $\beta$-cocoercive, we have that
\begin{equation}\label{eq3-13}
\langle x_{k}-x^{*},\gamma_{k}U_{k}Bx_{k}-\gamma_{k}U_{k}Bx^{*}\rangle_{U_{k}^{-1}}\geq \gamma_{k}\beta\left\|Bx_{k}-Bx^{*}\right\|^{2}.
\end{equation}
In addition, we have
\begin{align}\label{eq3-14}
\left\|\gamma_{k}U_{k}Bx_{k}-\gamma_{k}U_{k}Bx^{*}\right\|_{U_{k}^{-1}}^{2} &\leq \gamma_{k}^{2}\left\|U_{k}\right\| \left\|Bx_{k}-Bx^{*}\right\|^{2} \nonumber \\
 &\leq \mu \gamma_{k}^{2} \left\|Bx_{k}-Bx^{*}\right\|^{2}.
\end{align}
Substituting (\ref{eq3-13}) and (\ref{eq3-14}) into (\ref{eq3-12}), we obtain
\begin{align}\label{eq3-15}
& \quad \left\|J_{\gamma_ {k}U_{k}A}(x_{k}-\gamma_{k}U_{k}Bx_{k})-x^{*}\right\|_{U_{k}^{-1}}^{2} \nonumber \\
 &\leq \left\| x_{k}-x^{*} \right\|_{U_{k}^{-1}}^{2}
-\gamma_{k}(2\beta-\gamma_{k}\mu)\left\| Bx_{k}-Bx^{*} \right\|^{2} \nonumber \\
& -\left\|( x_{k}-J_{\gamma_{k}U_{k}A}(x_{k}-\gamma_{k}U_{k}Bx_{k}))-(\gamma_{k}U_{k}Bx_{k}-\gamma_{k}U_{k}Bx^{*})\right\|_{U_{k}^{-1}}^{2}.
\end{align}

The combination of (\ref{eq3-15}) with (\ref{eq3-6}) yields
\begin{align}\label{eq3-16}
\left\|\overline{x}_{k+1}-x^{*}\right\|_{U_{k}^{-1}}^{2} &\leq \left\|x_{k}-x^{*}\right\|_{U_{k}^{-1}}^{2}
-\lambda_{k}\gamma_{k}(2\beta-\gamma_{k}\mu)\left\| Bx_{k}-Bx^{*} \right\|^{2} \nonumber\\
\quad & -\lambda_{k}\left\|x_{k}- J_{\gamma_{k}U_{k}A}(x_{k}-\gamma_{k}U_{k}Bx_{k})-(\gamma_{k}U_{k}Bx_{k}-\gamma_{k}U_{k}Bx^{*})\right\|_{U_{k}^{-1}}^{2} \nonumber\\
\quad & -\lambda_{k}(1-\lambda_{k})\left\|x_{k}- J_{\gamma_{k}U_{k}A}(x_{k}-\gamma_{k}U_{k}Bx_{k})\right\|_{U_{k}^{-1}}^{2}.
\end{align}

Further, on the basis of (\ref{eq3-16}) and (\ref{eq3-5}), we obtain
\begin{align}\label{eq3-17}
\left\|x_{k+1}-x^{*}\right\|_{U_{k+1}^{-1}}^{2} &\leq (1+\eta_k)\left\|x_{k}-x^{*}\right\|_{U_{k}^{-1}}^{2}
- (1+\eta_k)\lambda_{k}\gamma_{k}(2\beta-\gamma_{k}\mu)\left\| Bx_{k}-Bx^{*} \right\|^{2} \nonumber\\
\quad & - (1+\eta_k) \lambda_{k}\left\|x_{k}- J_{\gamma_{k}U_{k}A}(x_{k}-\gamma_{k}U_{k}Bx_{k})-(\gamma_{k}U_{k}Bx_{k}-\gamma_{k}U_{k}Bx^{*})\right\|_{U_{k}^{-1}}^{2} \nonumber\\
\quad & - (1+\eta_k) \lambda_{k}(1-\lambda_{k})\left\|x_{k}- J_{\gamma_{k}U_{k}A}(x_{k}-\gamma_{k}U_{k}Bx_{k})\right\|_{U_{k}^{-1}}^{2} \nonumber \\
\quad & + 2\lambda_k(1+\eta_k)M \|U_{k}^{-1}\| \|e_k\|,
\end{align}
which implies that
\begin{align}\label{eq3-18}
\lambda_{k}\gamma_{k}(2\beta-\gamma_{k}\mu)\left\| Bx_{k}-Bx^{*} \right\|^{2} & \leq (1+\eta_k)\left\|x_{k}-x^{*}\right\|_{U_{k}^{-1}}^{2} - \left\|x_{k+1}-x^{*}\right\|_{U_{k+1}^{-1}}^{2} \nonumber \\
& \quad -(1+\eta_k) \lambda_{k}(1-\lambda_{k})\left\|x_{k}- J_{\gamma_{k}U_{k}A}(x_{k}-\gamma_{k}U_{k}Bx_{k})\right\|_{U_{k}^{-1}}^{2} \nonumber \\
 & \quad + 2\lambda_k(1+\eta_k)M \|U_{k}^{-1}\| \|e_k\|.
\end{align}

By the conditions on $\{\gamma_k\}$ and $\{\lambda_k\}$, and together with conclusions (i), (ii) and the fact that $\sum_{k=0}^{+\infty}\lambda_k \|e_k\|<+\infty$, letting $k\rightarrow +\infty$ in the above inequality, we obtain
\begin{equation}\label{eq3-19}
Bx_{k}\rightarrow Bx^{*} \textrm{ as } k\rightarrow +\infty.
\end{equation}
This completes the proof.

\end{proof}

\begin{remark}
Because the upper bound of the relaxation parameter $\{\lambda_k\}$ in Theorem \ref{main-theorem} is governed by the averaged constant of the variable metric forward-backward operator, Theorem \ref{main-theorem} provides a larger selection of
 the relaxation parameter and errors than Theorem 4.1 of Combettes \cite{Combettes2014Optimization}.
\end{remark}

\begin{remark}

If we assume that $\lambda_k \in (\underline{\lambda},1]$, then we reaffirm the conclusion that $\sum_{k=0}^{+\infty}\|Bx_{k}-Bx^*\|^2<+\infty$ as in Theorem 4.1 of the paper by Combettes \cite{Combettes2014Optimization}. In fact,
from inequality (\ref{eq3-18}), we have
\begin{align}
\underline{\lambda}\underline{\gamma}\epsilon \left\| Bx_{k}-Bx^{*} \right\|^{2}
& \leq \lambda_{k}\gamma_{k}(2\beta-\gamma_{k}\mu)\left\| Bx_{k}-Bx^{*} \right\|^{2} \nonumber \\
&  \leq (1+\eta_k)\left\|x_{k}-x^{*}\right\|_{U_{k}^{-1}}^{2} - \left\|x_{k+1}-x^{*}\right\|_{U_{k+1}^{-1}}^{2} \nonumber \\
 & \quad + 2\lambda_k(1+\eta_k)M \|U_{k}^{-1}\| \|e_k\|. \nonumber
\end{align}
By summing the above inequality from zero to infinity, we have
\begin{align}
& \quad \underline{\lambda}\underline{\gamma}\epsilon \sum_{k=0}^{+\infty}\left\| Bx_{k}-Bx^{*} \right\|^{2} \nonumber \\
 & \leq \left\|x_{0}-x^{*}\right\|_{U_{0}^{-1}}^{2}  +\sum_{k=0}^{+\infty}\eta_k \sup_{k\geq 0}{\left\|x_{k}-x^{*}\right\|_{U_{k}^{-1}}^{2}}  \nonumber \\
& \quad + \sum_{k=0}^{+\infty}2\lambda_k(1+\eta_k)M \|U_{k}^{-1}\| \|e_k\|, \nonumber
\end{align}
which implies that $\sum_{k=0}^{+\infty}\left\| Bx_{k}-Bx^{*} \right\|^{2} < +\infty$.

\end{remark}

\begin{remark}
In view of Theorem \ref{main-theorem} (iii), the iterative sequence generated by (\ref{variable-forward-backward2}) converges weakly to a point in $\Omega$. The strong convergence of $\{x_k\}$ requires $x_k \rightarrow x^{*}$, $x^{*}\in \Omega$. Similar to Theorem 4.1 of Combettes \cite{Combettes2014Optimization}, we need to assume that one of the following conditions holds.

(i) $\liminf_{k\rightarrow +\infty}d_{\Omega}(x_k) = 0$;

(ii) $A$ or $B$ is demiregular at every point in $\Omega$;

(iii) int$\Omega \neq \emptyset$ and there exists $\{v_k\}\in \ell_{+}^{1}(\mathbb{N})$ such that $(1+v_k)U_k \succeq U_{k+1}$.

Because the proof is the same as that of Combettes \cite{Combettes2014Optimization}, we omit it here.
\end{remark}

Next, we impose a slightly weaker condition on the iterative parameter $\lambda_k$ than in Theorem \ref{main-theorem} to ensure the weak convergence of the iterative sequence $\{x_k\}$.

\begin{theorem}\label{main-theorem2}

Let $H$ be a real Hilbert space. Let $A:H \rightarrow 2^{H}$ be maximal monotone. Let $B:H\rightarrow H$ be $\beta$-cocoercive, for some $\beta> 0$. Suppose that $\Omega:=zer(A+B)\neq \emptyset$. Let $\alpha> 0$, $\{\eta_{k}\}\in \ell_{+}^{1}(\mathbb{N})$ and $\{U_{k}\}\in \mathcal{P}_{\alpha}(H)$ such that
\begin{equation}\label{}
\mu = \sup_{k\in N} \left\|U_{k}\right\| < +\infty  \quad \textrm{and} \quad   ( 1+ \eta_{k}) U_{k+1} \succeq U_{k}, \quad \forall k \in \mathbb{N}.
\end{equation}
Let the iterative sequence $\{x_k\}$ be defined by (\ref{variable-forward-backward2}). Then, we have

\noindent \emph{(i)} For any $x^{*}\in \Omega$, $\lim_{k\rightarrow +\infty}\|x_{k}-x^{*}\|_{U_{k}^{-1}}$ exists;

Suppose that

\noindent \emph{(a)} $\sum_{k=0}^{+\infty}\lambda_k (\frac{1}{\alpha_k}-\lambda_k) = +\infty$, where $\alpha_{k}=\frac{2 \beta}{4 \beta-\gamma_{k} \|U_k\|}$;

\noindent \emph{(b)} $0< \underline{\gamma}\leq \gamma_{k} \leq \frac{2\beta-\epsilon}{\mu}$, where $\epsilon \in (0,2\beta-\mu\underline{\gamma})$;

\noindent \emph{(c)} $\sum_{k=0}^{+\infty}|  \gamma_{k+1} - \gamma_{k} | < +\infty$, $\sum_{k=0}^{+\infty}|  \gamma_{k+1}\|U_{k+1}\| - \gamma_{k}\|U_{k}\| | < +\infty$, and $\sum_{k=0}^{+\infty}\|U_{k}^{-1}x-U_{k+1}^{-1}x\|<+\infty$, for any $x\in H$.

Then,

\noindent \emph{(ii)} $\lim_{k\rightarrow +\infty} \| x_{k} - J_{\gamma_k U_k A}(x_k - \gamma_k U_k Bx_k) \| =0 $;

\noindent \emph{(iii)}  $\{x_{k}\}$ converges weakly to a point in $\Omega$;

Further, suppose that $\lambda_k \geq \underline{\lambda}>0$. Then

\noindent \emph{(iv)} $Bx_k \rightarrow Bx^{*}$ as $k\rightarrow +\infty$, where $x^{*}\in \Omega$.

\end{theorem}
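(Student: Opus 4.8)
The plan is to mirror the structure of the proof of Theorem~\ref{main-theorem}, since Theorem~\ref{main-theorem2} trades the uniform bound $\lambda_k\le\frac1{\alpha_k}-\tau$ for the summability condition (a) together with the extra summable-variation hypotheses in (c). Part (i) requires no change at all: the derivation of the Fej\'er-type inequality \eqref{eq3-4} used only that $\{\lambda_k\alpha_k\}$ may be $\le1$ (so that Lemma~\ref{lemma-equation} applies with a convex combination) — but in fact the estimate $\|\overline{x}_{k+1}-x^*\|_{U_k^{-1}}\le\|x_k-x^*\|_{U_k^{-1}}$ follows from the averagedness of $J_{\gamma_kU_kA}(I-\gamma_kU_kB)$ and $\lambda_k\le\frac1{\alpha_k}$ exactly as before, so Lemma~\ref{conergence-lemma1} again yields existence of $\lim_k\|x_k-x^*\|_{U_k^{-1}}$ and boundedness of $\{x_k\}$; fix $M$ with $\sup_k\|x_k-x^*\|\le M$.

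For part (ii) I would start from inequality \eqref{eq3-10}, which holds verbatim here. Summing it over $k$ and using that $\lim_k\|x_k-x^*\|_{U_k^{-1}}$ exists, that $\sum_k\eta_k<+\infty$ with $\{\|x_k-x^*\|_{U_k^{-1}}^2\}$ bounded, and that $\sum_k\lambda_k\|e_k\|<+\infty$, gives
\begin{equation}
\sum_{k=0}^{+\infty}\lambda_k\Bigl(\tfrac1{\alpha_k}-\lambda_k\Bigr)\bigl\|x_k-J_{\gamma_kU_kA}(x_k-\gamma_kU_kBx_k)\bigr\|_{U_k^{-1}}^2<+\infty .
\end{equation}
Since hypothesis (a) says $\sum_k\lambda_k(\frac1{\alpha_k}-\lambda_k)=+\infty$, the nonnegative terms $\bigl\|x_k-J_{\gamma_kU_kA}(x_k-\gamma_kU_kBx_k)\bigr\|^2$ cannot be bounded away from zero along the whole sequence; combined with the monotonicity/telescoping structure one gets that the full sequence $\|x_k-J_{\gamma_kU_kA}(x_k-\gamma_kU_kBx_k)\|_{U_k^{-1}}$ has liminf zero, and then — this is the delicate point — one must upgrade liminf to a genuine limit. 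Here the extra summable-variation conditions in (c) enter: they guarantee (via Lemma~\ref{our-lemma4}, essentially the computation in \eqref{asympto-estimation} but applied to consecutive indices rather than to a fixed limit $\gamma,U$) that $\bigl|\,\|x_{k+1}-T_{k+1}x_{k+1}\|-\|x_k-T_kx_k\|\,\bigr|$ is controlled by a summable sequence plus $\|x_{k+1}-x_k\|=\lambda_k\|\,J_{\gamma_kU_kA}(y_k)+a_k-x_k\|$, and the latter is itself small along the liminf-subsequence. A standard "if a sequence has summable consecutive increments beyond a subsequence tending to $0$, it tends to $0$" argument then gives \eqref{eq3-11}, hence \eqref{eq3-112} by norm equivalence.

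Parts (iii) and (iv) can then be taken over almost word for word from Theorem~\ref{main-theorem}. For (iii): pick a weak cluster point $\bar x$ with $x_{k_n}\rightharpoonup\bar x$, pass to a further subsequence with $\gamma_{k_n}\to\gamma\in[\underline\gamma,\frac{2\beta}{\alpha}]$, use Lemma~\ref{convergence-lemma2} to get $U_{k_n}^{-1}\to U^{-1}$ pointwise with $U^{-1}\in\mathcal P_{1/\mu}(H)$, run the chain of estimates \eqref{asympto-estimation} to conclude $\|x_{k_n}-J_{\gamma UA}(x_{k_n}-\gamma UBx_{k_n})\|\to0$, apply the demiclosedness principle to the nonexpansive operator $J_{\gamma UA}(I-\gamma UB)$ to get $\bar x\in\operatorname{zer}(A+B)$, and finish with Lemma~\ref{convergence-lemma2} and part~(i). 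For (iv): from \eqref{eq3-18}, now using $\lambda_k\ge\underline\lambda$ and $\gamma_k(2\beta-\gamma_k\mu)\ge\underline\gamma\epsilon$ by hypothesis (b), together with (i), (ii) and $\sum_k\lambda_k\|e_k\|<+\infty$, letting $k\to+\infty$ yields $Bx_k\to Bx^*$.

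The main obstacle is the upgrade from liminf to limit in part (ii): unlike in Theorem~\ref{main-theorem}, where $\lambda_k(\frac1{\alpha_k}-\lambda_k)\ge\underline\lambda\tau>0$ forces the whole sequence to vanish, here one only controls a weighted sum with possibly degenerate weights, so establishing that no subsequence of $\|x_k-T_kx_k\|$ escapes to a positive value genuinely needs the asymptotic-regularity estimate on consecutive iterates supplied by the three summable-variation conditions in (c). Everything else is a careful re-running of the Theorem~\ref{main-theorem} bookkeeping.
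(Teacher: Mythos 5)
Your overall plan and your treatment of parts (i), (iii) and (iv) coincide with the paper's proof: (i) is unchanged from Theorem \ref{main-theorem}, and (iii)--(iv) are indeed taken over verbatim once (ii) is in hand. The gap is in part (ii), precisely at the step you yourself flag as delicate: upgrading $\liminf_{k}\|x_k-T_kx_k\|=0$ to a genuine limit. Your proposed mechanism --- bounding $\bigl|\,\|x_{k+1}-T_{k+1}x_{k+1}\|-\|x_k-T_kx_k\|\,\bigr|$ by a summable sequence plus $\|x_{k+1}-x_k\|$ and then invoking a ``summable increments beyond a vanishing subsequence'' principle --- does not close. The term $\|x_{k+1}-x_k\|=\lambda_k\|T_kx_k-x_k+e_k\|$ is of the same order as the residual you are trying to control; it is neither summable nor small off the $\liminf$-subsequence, so the increments are not summable and the principle does not apply. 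Worse, if you run the recursion with $T_k$ you get
\begin{equation*}
\|x_{k+1}-T_{k+1}x_{k+1}\|\;\le\;\bigl(|1-\lambda_k|+\lambda_k\bigr)\,\|x_k-T_kx_k\|\;+\;(\text{summable}),
\end{equation*}
and since the whole point of the theorem is to allow $\lambda_k>1$, the coefficient $|1-\lambda_k|+\lambda_k=2\lambda_k-1$ can exceed $1$ and is not of the form $1+\eta_k$ with $\{\eta_k\}$ summable; the resulting inequality is compatible with the residual oscillating between $0$ and a positive value.

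The paper's proof fixes exactly this point by not working with $T_k$ at all: writing $T_k=(1-\alpha_k)I+\alpha_kR_k$ with $R_k$ nonexpansive on $H_{U_k^{-1}}$, the update becomes the convex combination $x_{k+1}=(1-\lambda_k\alpha_k)x_k+\lambda_k\alpha_kR_kx_k+\lambda_ke_k$ with $\lambda_k\alpha_k\le 1$. Then $\|x_{k+1}-x_k\|_{U_{k+1}^{-1}}$ is absorbed with coefficient $\lambda_k\alpha_k$, the leading term carries $1-\lambda_k\alpha_k$, and the two sum exactly to $1$; together with the bound on $\|R_kx_k-R_{k+1}x_k\|$ coming from Lemma \ref{our-lemma4} and the three summability hypotheses in (c), this yields $\|x_{k+1}-R_{k+1}x_{k+1}\|_{U_{k+1}^{-1}}\le(1+\eta_k)\|x_k-R_kx_k\|_{U_k^{-1}}+\varepsilon_k$ with $\{\varepsilon_k\}$ summable. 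Lemma \ref{conergence-lemma1} then shows $\|x_k-R_kx_k\|_{U_k^{-1}}$ converges, and since its $\liminf$ is $0$ the limit is $0$; the claim for $T_k$ follows because $x_k-T_kx_k=\alpha_k(x_k-R_kx_k)$ with $\alpha_k\le 1$. So your outline identifies the right ingredients (conditions (c), Lemma \ref{our-lemma4}, consecutive-index comparison) but is missing the one idea that makes them sufficient: passing to the nonexpansive components $R_k$ so that the residual sequence is itself quasi-Fej\'er monotone.
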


\begin{proof}
(i) Let $x^{*}\in \Omega$, it follows from the same proof of Theorem \ref{main-theorem} (i) and we know that $\lim_{k\rightarrow +\infty}\|x_k - x^{*}\|_{U_{k}^{-1}}$ exists. Then, $\{\|x_k - x^{*}\|\}$ is bounded. Let $M:= \sup_{k\geq 0}\|x_k - x^{*}\|$.

(ii) From (\ref{eq3-10}), we obtain
\begin{align}\label{eq3-21}
& \quad \sum_{k=0}^{+\infty}\lambda_{k} (\frac{1}{\alpha_{k}}-\lambda_{k})\left\|x_{k}- J_{\gamma_ {k}U_{k}A}(x_{k}-\gamma_{k}U_{k}Bx_{k})\right\|_{U_{k}^{-1}}^{2} \nonumber \\
& \leq \left\|x_{0}-x^{*}\right\|_{U_{0}^{-1}}^{2} + \frac{1}{\alpha} M^2 \sum_{k=0}^{+\infty}\eta_k + 2 \frac{1}{\alpha} \sum_{k=0}^{+\infty}(1+\eta_k)M  \lambda_k\|e_k\|.
\end{align}
Because $\sum_{k=0}^{+\infty}\eta_k < +\infty$ and $\sum_{k=0}^{+\infty} \lambda_k \|e_k\| < +\infty$, then
\begin{equation}\label{eq3-22}
\sum_{k=0}^{+\infty}\lambda_{k} (\frac{1}{\alpha_{k}}-\lambda_{k})\left\|x_{k}- J_{\gamma_ {k}U_{k}A}(x_{k}-\gamma_{k}U_{k}Bx_{k})\right\|_{U_{k}^{-1}}^{2} < +\infty.
\end{equation}

Let $T_k = J_{\gamma_k U_k A}(I - \gamma_k U_k B)$. By condition (a), (\ref{eq3-22}) implies that $\liminf_{k\rightarrow +\infty}\left\|x_{k}- T_k x_{k}\right\|_{U_{k}^{-1}} =0$.
Consequently, $\lim\inf_{k\rightarrow +\infty}\left\|x_{k}- T_k x_{k}\right\| =0$. Because $T_k$ is $\alpha_k$-averaged, where $\alpha_k = \frac{2\beta}{4\beta - \gamma_k \|U_k\|}$, there exists nonexpansive mappings $R_k$ on $H_{U_{k}^{-1}}$ such that
$T_k = (1-\alpha_{k})I + \alpha_{k} R_k$. Then, $\lim\inf_{k\rightarrow +\infty}\|x_k - R_k x_k\|_{U_{k}^{-1}}=0$. Next, we prove that $\lim_{k\rightarrow +\infty}\|x_k - R_k x_k\|=0$.

Using formulation (\ref{eq3-2}) and the fact that $R_{k+1}$ is nonexpansive on $H_{{U_{k+1}^{-1}}}$, we have
\begin{align}\label{eq3-23}
\|x_{k+1} - R_{k+1}x_{k+1}\|_{U_{k+1}^{-1}} & \overset{(\ref{eq3-2})}{=} \| \overline{x}_{k+1} - R_{k+1}x_{k+1} + \lambda_k e_k  \|_{U_{k+1}^{-1}} \nonumber \\
& \leq \| \overline{x}_{k+1} - R_{k+1}x_{k+1}\|_{U_{k+1}^{-1}} + \lambda_k \|e_k  \|_{U_{k+1}^{-1}} \nonumber \\
& = \| (1-\lambda_k \alpha_{k})x_k + \lambda_k \alpha_{k} R_k x_k - R_{k+1}x_{k+1}   \|_{U_{k+1}^{-1}}    + \lambda_k \|e_k  \|_{U_{k+1}^{-1}} \nonumber \\
& = \| (1-\lambda_k \alpha_{k})(x_k - R_{k}x_k) +  R_k x_k - R_{k+1}x_{k+1}   \|_{U_{k+1}^{-1}}    + \lambda_k \|e_k  \|_{U_{k+1}^{-1}} \nonumber \\
& \leq (1-\lambda_k \alpha_{k}) \|x_k - R_{k}x_k \|_{U_{k+1}^{-1}} + \| R_k x_k - R_{k+1}x_{k}\|_{U_{k+1}^{-1}} \nonumber \\
& \quad + \| R_{k+1} x_k - R_{k+1}x_{k+1} \|_{U_{k+1}^{-1}} + \lambda_k \|e_k  \|_{U_{k+1}^{-1}} \nonumber \\
& \leq (1-\lambda_k \alpha_k) \|x_k - R_{k}x_k \|_{U_{k+1}^{-1}} + \| R_k x_k - R_{k+1}x_{k}\|_{U_{k+1}^{-1}} + \|  x_k - x_{k+1} \|_{U_{k+1}^{-1}} \nonumber \\
& \quad  + \lambda_k \|e_k  \|_{U_{k+1}^{-1}} \nonumber \\
& \leq  \|x_k - R_{k}x_k \|_{U_{k+1}^{-1}} + \| R_k x_k - R_{k+1}x_{k}\|_{U_{k+1}^{-1}} + 2 \sqrt{\frac{1}{\alpha}}\lambda_k \|e_k  \|.
\end{align}
On the other hand, using the relation $R_k = (1-\frac{1}{\alpha_{k}})I +\frac{1}{\alpha_{k}} T_k$ and Lemma \ref{our-lemma4}, we have
\begin{align}\label{eq3-24}
& \quad \left\| R_k x_k - R_{k+1}x_{k}\right\|_{U_{k+1}^{-1}} \nonumber \\
& = \left\|   (1-\frac{1}{\alpha_{k}})x_k + \frac{1}{\alpha_{k}} T_k x_k - (1-\frac{1}{\alpha_{k+1}})x_k - \frac{1}{\alpha_{k+1}}T_{k+1}x_k       \right\|_{U_{k+1}^{-1}} \nonumber \\
& \leq  \left| \frac{1}{\alpha_{k+1}} - \frac{1}{\alpha_k} \right| \|x_k\|_{U_{k+1}^{-1}} + \left\| \frac{1}{\alpha_{k}} T_k x_k -  \frac{1}{\alpha_{k+1}}T_{k+1}x_k \right \|_{U_{k+1}^{-1}} \nonumber \\
& \leq  \left| \frac{1}{\alpha_{k+1}} - \frac{1}{\alpha_k} \right| \|x_k\|_{U_{k+1}^{-1}} + \left\| \frac{1}{\alpha_{k}} T_k x_k -  \frac{1}{\alpha_{k+1}}T_{k}x_k \right \|_{U_{k+1}^{-1}} \nonumber \\
& \quad + \left\| \frac{1}{\alpha_{k+1}} T_k x_k -  \frac{1}{\alpha_{k+1}}T_{k+1}x_k  \right\|_{U_{k+1}^{-1}} \nonumber \\
& \leq \frac{1}{2\beta} \left| \gamma_k \|U_k\| - \gamma_{k+1}\|U_{k+1}\| \right| \left(\|x_k\|_{U_{k+1}^{-1}} + \|T_k x_k\|_{U_{k+1}^{-1}} \right) + \frac{1}{\alpha_{k+1}}\|  T_k x_k - T_{k+1}x_k  \|_{U_{k+1}^{-1}} \nonumber \\
& \leq \frac{1}{2\beta} \left| \gamma_k \|U_k\| - \gamma_{k+1}\|U_{k+1}\| \right| \left(\|x_k\|_{U_{k+1}^{-1}} + \|T_k x_k\|_{U_{k+1}^{-1}} \right) + 2 \sqrt{\frac{1}{\alpha}} \|  T_k x_k - T_{k+1}x_k  \| \nonumber \\
& \leq \frac{1}{2\beta} \left| \gamma_k \|U_k\| - \gamma_{k+1}\|U_{k+1}\| \right| \left(\|x_k\|_{U_{k+1}^{-1}} + \|T_k x_k\|_{U_{k+1}^{-1}} \right) \nonumber \\
& \quad + 2 \sqrt{\frac{1}{\alpha}} \frac{\mu}{\gamma_{k+1}}  \left\| (\gamma_{k+1} U_{k}^{-1} - \gamma_k U_{k+1}^{-1})(x_k - T_{k+1}x_k)  \right\| \nonumber \\
& \leq \frac{1}{2\beta} \left| \gamma_k \|U_k\| - \gamma_{k+1}\|U_{k+1}\| \right| \left(\|x_k\|_{U_{k+1}^{-1}} + \|T_k x_k\|_{U_{k+1}^{-1}} \right) \nonumber \\
& \quad + 2 \sqrt{\frac{1}{\alpha}} \frac{\mu}{\underline{\gamma}} \left|\gamma_{k+1}-\gamma_{k} \right| \left\| U_{k}^{-1}(x_k - T_{k+1}x_k) \right \| \nonumber \\
& \quad + 2 \sqrt{\frac{1}{\alpha}} \frac{\mu}{\underline{\gamma}} \frac{2\beta}{\alpha} \left\| (U_{k}^{-1}-U_{k+1}^{-1})(x_k - T_{k+1}x_k) \right\|.
\end{align}
The combination of (\ref{eq3-24}) with (\ref{eq3-23}) yields
\begin{align}\label{eq3-25}
& \quad \|x_{k+1} - R_{k+1}x_{k+1}\|_{U_{k+1}^{-1}} \nonumber \\
& \leq \|x_k - R_{k}x_k \|_{U_{k+1}^{-1}} + \frac{1}{2\beta} \left| \gamma_k \|U_k\| - \gamma_{k+1}\|U_{k+1}\| \right | \left(\|x_k\|_{U_{k+1}^{-1}} + \|T_k x_k\|_{U_{k+1}^{-1}} \right) \nonumber \\
& \quad + 2 \sqrt{\frac{1}{\alpha}} \frac{\mu}{\underline{\gamma}} |\gamma_{k+1}-\gamma_{k}| \left\| U_{k}^{-1}(x_k - T_{k+1}x_k) \right \| \nonumber \\
& \quad + 2 \sqrt{\frac{1}{\alpha}} \frac{\mu}{\underline{\gamma}} \frac{2\beta}{\alpha} \left\| (U_{k}^{-1}-U_{k+1}^{-1})(x_k - T_{k+1}x_k) \right \| + 2 \sqrt{\frac{1}{\alpha}}\lambda_k \|e_k  \| \nonumber \\
& \leq (1+\eta_k)\|x_k - R_{k}x_k \|_{U_{k}^{-1}} + \frac{1}{2\beta} \left | \gamma_k \|U_k\| - \gamma_{k+1}\|U_{k+1}\| \right | \left(\|x_k\|_{U_{k+1}^{-1}} + \|T_k x_k\|_{U_{k+1}^{-1}}\right ) \nonumber \\
& \quad + 2 \sqrt{\frac{1}{\alpha}} \frac{\mu}{\underline{\gamma}} |\gamma_{k+1}-\gamma_{k}| \left\| U_{k}^{-1}(x_k - T_{k+1}x_k)  \right\| \nonumber \\
& \quad + 2 \sqrt{\frac{1}{\alpha}} \frac{\mu}{\underline{\gamma}} \frac{2\beta}{\alpha} \left\| (U_{k}^{-1}-U_{k+1}^{-1})(x_k - T_{k+1}x_k) \right\| + 2 \sqrt{\frac{1}{\alpha}}\lambda_k \|e_k\|.
\end{align}
 With the help of Lemma \ref{conergence-lemma1}, we can conclude from (\ref{eq3-25}) that $\lim_{k\rightarrow +\infty}\|x_k - R_k x_k\|_{U_{k}^{-1}}=0$. Hence, $\lim_{k\rightarrow +\infty}\|x_k - R_k x_k\|=0$. As a consequence, $\lim_{k\rightarrow +\infty}\|x_k - T_k x_k\|=0$.

(iii) and (iv) can be proven using the same proof as Theorem \ref{main-theorem}.

\end{proof}

\begin{remark}
In Theorem \ref{main-theorem2}, we obtain the weak convergence of the iterative sequence generated by (\ref{variable-forward-backward2}) with a weaker condition on $\{\lambda_k\}$ than Theorem \ref{main-theorem}.

\end{remark}

In Theorems \ref{main-theorem} and \ref{main-theorem2}, let $ U_{k}=I $, in which case we obtain the following corollary, which shows the convergence of the forward-backward splitting algorithm with variable step sizes.

\begin{corollary}\label{corollary}
Let $H$ be a real Hilbert space. Let $A:H\rightarrow 2^{H} $ be maximal monotone. Let $B:H\rightarrow H $ be $\beta$-cocoercive,
for some $\beta > 0$. Suppose that $\Omega = \textrm{zer} (A+B) \neq \emptyset$.
Let $\{\gamma_{k}\} \subset (0, 2\beta)$, and $\{\lambda_k\} \subset (0,\frac{1}{\alpha_{k}})$, where $\alpha_{k}=\frac{2 \beta}{4 \beta-\gamma_{k}}$.
Let $\{a_{k}\}$ and $\{b_{k}\}$ be two sequences in $H$ such that $\sum_{k=0}^{+\infty}\lambda_k \|a_k\|<+\infty$ and $\sum_{k=0}^{+\infty}\lambda_k \|b_k\|< +\infty$. Let $x_{0}\in H$, and set
\begin{equation}\label{relax-forward-backward-fixed-metric}
\left\{
\begin{aligned}
y_{k}   & =  x_{k}-\gamma_{k}(Bx_{k}+b_{k}), \\
x_{k+1} & =  x_{k}+\lambda_{k}(J_ {\gamma_{k}A}(y_{k})+a_{k}-x_{k}).
\end{aligned}
\right.
\end{equation}
Then, we have

\noindent \emph{(i)} for any $x^{*}\in \Omega$, $\lim_{k\rightarrow +\infty}\|x_{k}-x^{*}\|$ exists;

Suppose that

\noindent (a1) $0< \underline {\lambda}\leq \lambda_{k}$;

\noindent (a2) $\lambda_{k} \leq \frac{1}{\alpha_{k}}-\tau$, where $\tau \in (0,\frac{1}{\alpha_{k}}-\underline{\lambda})$;

\noindent (a3) $0< \underline{\gamma} \leq \gamma_k $;

\noindent (a4) $\gamma_k \leq 2\beta-\epsilon $, where $\epsilon \in (0,2\beta- \underline{\gamma})$.

\noindent (a5)\ $\sum_{k=0}^{+\infty}\lambda_k (\frac{1}{\alpha_k}-\lambda_k) = +\infty$ and $\sum_{k=0}^{+\infty}|  \gamma_{k+1} - \gamma_{k} | < +\infty$.

If the conditions of (a1)-(a2) or (a3)-(a5) hold, then we have

\noindent \emph{(ii)} $\lim_{k\rightarrow +\infty} \| x_{k} - J_{\gamma_k A}(x_k - \gamma_k Bx_k) \| =0 $;

If the conditions of (a1)-(a3) or (a3)-(a5) hold, then we have

\noindent \emph{(iii)}  $\{x_{k}\}$ converges weakly to a point in $\Omega$;

If the conditions of (a1)-(a3) or (a1), (a3)-(a5) hold, then we have

\noindent \emph{(iv)} $Bx_k \rightarrow Bx^{*}$ as $k\rightarrow +\infty$, where $x^{*}\in \Omega$.

\end{corollary}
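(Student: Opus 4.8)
\noindent The plan is to derive Corollary \ref{corollary} as a direct specialization of Theorems \ref{main-theorem} and \ref{main-theorem2} to the case $U_k = I$ for every $k$. First I would record what this choice entails: taking $\alpha = 1$ we have $I \in \mathcal{P}_{1}(H) \subseteq \mathcal{P}_{\alpha}(H)$, so $\mu = \sup_{k\in\mathbb{N}}\|U_k\| = 1 < +\infty$, and with the (summable) choice $\eta_k \equiv 0 \in \ell_{+}^{1}(\mathbb{N})$ the monotonicity requirement $(1+\eta_k)U_{k+1} \succeq U_k$ reads $I \succeq I$, which holds trivially. Consequently $H_{U_k^{-1}}$ is just $H$ with its original scalar product, every weighted norm $\|\cdot\|_{U_k^{-1}}$ collapses to $\|\cdot\|$, the averaged constant becomes $\alpha_k = \frac{2\beta}{4\beta - \gamma_k\|U_k\|} = \frac{2\beta}{4\beta - \gamma_k}$, and $J_{\gamma_k U_k A} = J_{\gamma_k A}$. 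Thus the iteration (\ref{relax-forward-backward-fixed-metric}) is precisely (\ref{variable-forward-backward2}) with $U_k = I$, and the admissible step-size range $\{\gamma_k\} \subset (0,2\beta)$ coincides with $(0, 2\beta/\|U_k\|)$.

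Having identified the two schemes, I would then check the remaining hypotheses and quote the conclusions. The summability conditions $\sum_k \lambda_k\|a_k\| < +\infty$ and $\sum_k \lambda_k\|b_k\| < +\infty$ are assumed, so conclusion (i) follows at once from Theorem \ref{main-theorem}(i) (equivalently Theorem \ref{main-theorem2}(i)). For the first family of hypotheses: conditions (a1)--(a2) are exactly $0 < \underline{\lambda} \leq \lambda_k \leq \frac{1}{\alpha_k} - \tau$, so Theorem \ref{main-theorem}(ii) gives $\|x_k - J_{\gamma_k A}(x_k - \gamma_k Bx_k)\| \to 0$; adjoining (a3), $0 < \underline{\gamma} \leq \gamma_k$, Theorem \ref{main-theorem}(iii) yields weak convergence of $\{x_k\}$ to a point of $\Omega$; adjoining (a4), $\gamma_k \leq 2\beta - \epsilon = (2\beta - \epsilon)/\mu$, Theorem \ref{main-theorem}(iv) gives $Bx_k \to Bx^*$.

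For the second family I would invoke Theorem \ref{main-theorem2}. Its hypothesis (a), $\sum_k \lambda_k(\frac{1}{\alpha_k} - \lambda_k) = +\infty$, is the first half of (a5); its hypothesis (b), $0 < \underline{\gamma} \leq \gamma_k \leq (2\beta - \epsilon)/\mu = 2\beta - \epsilon$, is (a3) together with (a4); and its hypothesis (c) is the only place where a short verification is needed, since with $\|U_k\| \equiv 1$ and $U_k^{-1} \equiv I$ the three series $\sum_k|\gamma_{k+1} - \gamma_k|$, $\sum_k |\gamma_{k+1}\|U_{k+1}\| - \gamma_k\|U_k\||$, and $\sum_k\|U_k^{-1}x - U_{k+1}^{-1}x\|$ all reduce to $\sum_k|\gamma_{k+1} - \gamma_k| < +\infty$, the second half of (a5). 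Hence Theorem \ref{main-theorem2}(ii)--(iii) deliver items (ii)--(iii) of the corollary under (a3)--(a5), and adjoining $\lambda_k \geq \underline{\lambda} > 0$, i.e.\ (a1), Theorem \ref{main-theorem2}(iv) delivers item (iv).

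Since every assertion is a transcription of an already-proved statement, there is no genuine analytic obstacle; the only care required is the bookkeeping above — in particular verifying that hypothesis (c) of Theorem \ref{main-theorem2} degenerates as claimed when $U_k = I$, and checking that the combinations of (a1)--(a5) attached to each item of the corollary match exactly the hypotheses consumed in the corresponding parts of the two theorems (note, for instance, that the route to $Bx_k \to Bx^*$ through Theorem \ref{main-theorem}(iv) also invokes (a4), so the intended hypothesis set for that branch of item (iv) is (a1)--(a4)).
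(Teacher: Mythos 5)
Your proposal is correct and follows exactly the paper's own route: the paper gives no separate proof of Corollary \ref{corollary} beyond the remark that one sets $U_{k}=I$ in Theorems \ref{main-theorem} and \ref{main-theorem2}, which is precisely the specialization you carry out (with $\mu=1$, $\eta_k\equiv 0$, all weighted norms collapsing to $\|\cdot\|$, and hypothesis (c) of Theorem \ref{main-theorem2} degenerating to $\sum_k|\gamma_{k+1}-\gamma_k|<+\infty$). Your observation that the first branch of item (iv) actually consumes (a4) as well, since Theorem \ref{main-theorem}(iv) requires $\gamma_k\leq(2\beta-\epsilon)/\mu$, is a correct catch of a small bookkeeping slip in the corollary's statement.
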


\begin{remark}
Under the condition (a1)-(a3), Corollary \ref{corollary} reaffirms Proposition 4.4 of Combettes and Yamada \cite{Combettes2015JMAA}. In addition, we obtain the convergence of the iterative scheme (\ref{relax-forward-backward-fixed-metric}) under the condition (a3)-(a5), which provides a weaker assumption on the relaxation parameters $\lambda_k$ than the condition (a1) and (a2). Consequently, the obtained results improve and generalize Proposition 4.4 of Combettes and Yamada \cite{Combettes2015JMAA}.

\end{remark}

As an application of Theorems \ref{main-theorem} and \ref{main-theorem2}, we can obtain the following convergence results for solving convex minimization problem (\ref{sum-two-convex}).

\begin{corollary}\label{coro2}
Let $H$ be a real Hilbert space. Let $g:H\rightarrow (-\infty,+\infty]$ be a proper, lower semi-continuous, convex function. Let $f:H\rightarrow R$ be convex and differentiable with a $1/\beta$-Lipschitz continuous gradient. Assume that $\Omega$ is the set of solutions of problem (\ref{sum-two-convex}) and  $\Omega \neq \emptyset$.  Let $x_{0}\in H$, and set
\begin{equation}\label{forward-backward-convex-optimization}
\left\{
\begin{aligned}
y_{k}   & =  x_{k}-\gamma_{k}U_{k}(\nabla f (x_{k})+b_{k}), \\
x_{k+1} & =  x_{k}+\lambda_{k}(prox_{\gamma_{k}g}^{U_{k}^{-1}}(y_{k})+a_{k}-x_{k}),
\end{aligned}
\right.
\end{equation}
where $\{U_k\}$, $\{\gamma_{k}\}$, $\{\lambda_k\}$, $\{a_{k}\}$, and $\{b_{k}\}$ satisfy the same conditions as in Theorem \ref{main-theorem} or Theorem \ref{main-theorem2}.

Then the following hold:

\noindent (i) For any $x^{*}\in \Omega$, $\lim_{k\rightarrow +\infty}\|x_{k}-x^{*}\|_{U_{k}^{-1}}$ exists;

\noindent (ii) $\lim_{k\rightarrow +\infty} \| x_{k} - prox_{\gamma_k g}^{U_{k}^{-1}}(x_k - \gamma_k U_k \nabla f(x_k)) \| =0 $;

\noindent (iii) $\{x_{k}\}$ converges weakly to a point in $\Omega$;

\noindent (iv) $\nabla f(x_k) \rightarrow \nabla f(x^{*})$ as $k\rightarrow +\infty$, where $x^{*}\in \Omega$.
\end{corollary}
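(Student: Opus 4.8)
The plan is to recognize the recursion (\ref{forward-backward-convex-optimization}) as the instance of (\ref{variable-forward-backward2}) obtained by choosing $A=\partial g$ and $B=\nabla f$, and then to invoke Theorem \ref{main-theorem} (or Theorem \ref{main-theorem2}) directly. Accordingly, the argument splits into two parts: verifying that the pair $(\partial g,\nabla f)$ satisfies the standing hypotheses of those theorems, and verifying that the two recursions literally coincide.

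For the first part, since $g\in\Gamma_{0}(H)$ the operator $A:=\partial g$ is maximal monotone, as recalled in Section \ref{sec:pre}. Since $f\colon H\to R$ is convex and differentiable with a $1/\beta$-Lipschitz continuous gradient, the Baillon--Haddad theorem \cite{bauschkebook2011} gives that $B:=\nabla f$ is $\beta$-cocoercive, which is exactly the requirement on $B$ in Theorems \ref{main-theorem} and \ref{main-theorem2}. Finally, because $f$ is finite-valued and continuous, the subdifferential sum rule yields $\partial(f+g)=\nabla f+\partial g$, and Fermat's rule then gives $x^{*}\in\Omega\Leftrightarrow 0\in\nabla f(x^{*})+\partial g(x^{*})\Leftrightarrow x^{*}\in\mathrm{zer}(A+B)$; hence $\mathrm{zer}(A+B)=\Omega\neq\emptyset$, so the hypothesis $\Omega\neq\emptyset$ transfers.

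For the second part, applying the identity $prox^{U}_{h}=J_{U^{-1}\partial h}$ from Section \ref{sec:pre} with $h=\gamma_{k}g$ and the metric induced by $U_{k}^{-1}$ gives $prox^{U_{k}^{-1}}_{\gamma_{k}g}=J_{(U_{k}^{-1})^{-1}\partial(\gamma_{k}g)}=J_{\gamma_{k}U_{k}\partial g}=J_{\gamma_{k}U_{k}A}$, since scalars commute with $U_{k}$ and $\partial(\gamma_{k}g)=\gamma_{k}\partial g$ for $\gamma_{k}>0$. Substituting this, together with $\nabla f=B$, into (\ref{forward-backward-convex-optimization}) shows that $\{x_{k}\}$ is precisely the sequence produced by (\ref{variable-forward-backward2}) for the data $(A,B)$, with the same metrics, step sizes, relaxation parameters, and error sequences (note the summability conditions on $\{a_{k}\},\{b_{k}\}$ are identical). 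Conclusions (i)--(iii) then follow immediately from Theorem \ref{main-theorem} (or Theorem \ref{main-theorem2}), upon observing that $J_{\gamma_{k}U_{k}A}(x_{k}-\gamma_{k}U_{k}Bx_{k})=prox^{U_{k}^{-1}}_{\gamma_{k}g}(x_{k}-\gamma_{k}U_{k}\nabla f(x_{k}))$; and conclusion (iv) is the assertion $Bx_{k}\to Bx^{*}$, i.e. $\nabla f(x_{k})\to\nabla f(x^{*})$, which is part (iv) of the relevant theorem under the additional parameter condition it requires, a condition subsumed in the hypotheses assumed here.

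I do not expect a genuine obstacle: the proof is essentially a change of notation. The only points needing care are the precise bookkeeping of $U_{k}$ versus $U_{k}^{-1}$ in the scaled proximity operator, and the explicit invocation of Baillon--Haddad to pass from ``$1/\beta$-Lipschitz gradient of $f$'' to ``$\beta$-cocoercivity of $\nabla f$'', so that the hypotheses of Theorems \ref{main-theorem} and \ref{main-theorem2} are met to the letter.
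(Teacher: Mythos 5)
Your proposal is correct and follows essentially the same route as the paper: set $A=\partial g$, $B=\nabla f$, invoke Baillon--Haddad for the $\beta$-cocoercivity of $\nabla f$, identify $prox_{\gamma_{k}g}^{U_{k}^{-1}}=J_{\gamma_{k}U_{k}\partial g}$, and apply Theorem \ref{main-theorem} or \ref{main-theorem2}. Your write-up is in fact slightly more complete than the paper's, which omits the explicit verification that $\mathrm{zer}(\partial g+\nabla f)=\Omega$ via the subdifferential sum rule and Fermat's rule.
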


\begin{proof}
Because $f$ is convex differentiable, according to the Baillon-Haddad theorem, $\nabla f $ is $\beta$-cocoercive.
From the definition of the proximity operator on the Hilbert space $H_{U^{-1}}$, we know that
\begin{equation}
prox_{\gamma_{k}g}^{U_{k}^{-1}}(u)=J_ {\gamma_{k}U_{k}\partial g}(u).
\end{equation}
Set $A=\partial g$ and $B=\nabla f$ in Theorem \ref{main-theorem} or Theorem \ref{main-theorem2} and this enables us to confirm the conclusions of Corollary \ref{coro2}.
\end{proof}

\section{Applications}

In this section, we present our study of several applications of the variable metric forward-backward splitting algorithm.

\subsection{Application to variational inequality problem}

Consider the following variational inequality problem (VIP):
\begin{equation}\label{vip}
\textrm{find }\ x^* \in C, \quad \textrm{such that }\ \langle Bx^*, y-x^*  \rangle \geq 0, \quad \forall y\in C,
\end{equation}
where $C$ is a nonempty closed convex subset of $H$, and $B:H\rightarrow H$ is a nonlinear operator.

Recall the indicator function $\delta_{C}$, which is defined as
\begin{equation}
\delta_{C}(x)=\left\{
\begin{aligned}
& 0,  & x\in C \\
& +\infty,  &\textrm{otherwise}. \\
\end{aligned}
\right.
\end{equation}
The proximal operator of $\delta_{C}$ is well known to be the metric projection on $C$, which is defined by
$$
P_{C}(x) = prox_{\delta_{C}}(x) = \arg\min_{y\in C} \|x-y\|.
$$

The normal cone operator of $C$ is $N_{C}$, which is defined by
\begin{equation}
N_{C}(x)=\left\{
\begin{aligned}
& \{ w| \langle w, y-x \rangle \leq 0, \forall y\in C \},  & x\in C \\
& \emptyset,  &\textrm{otherwise}. \\
\end{aligned}
\right.
\end{equation}
Then, VIP (\ref{vip}) is equivalent to the following monotone inclusion problem:
\begin{equation}\label{vip-monotone}
0\in Bx + N_{C}(x).
\end{equation}
Assuming that $B$ is $\beta$-cocoercive, then (\ref{vip-monotone}) is a special case of the monotone inclusion problem (\ref{two-monotone-inclusion}). Let $A=N_{C}$, then we know that $J_{\gamma U A} = P_{C}^{U^{-1}}$, for any $\gamma >0$ and $U\in \mathcal{P}_{\alpha}(H)$. The operator $P_{C}^{U^{-1}}$ denotes the projector onto a nonempty closed convex subset $C$ of $H$ relative to the norm $\|\cdot\|_{U^{-1}}$. More precisely,
$$
P_{C}^{U^{-1}}(x) =  \arg\min_{y\in C} \|x-y\|_{U^{-1}}.
$$
On the basis of Theorems \ref{main-theorem} and \ref{main-theorem2}, we obtain the following convergence theorem to solve the VIP (\ref{vip}).

\begin{theorem}
Let $H$ be a real Hilbert space. Let $B:H\rightarrow H$ be a $\beta$-cocoercive operator. We denote by $\Omega$ the solution set of VIP (\ref{vip}) and assume that $\Omega \neq \emptyset$.  Let $x_{0}\in H$, set
\begin{equation}\label{vip-iterative-algorithm}
\left\{
\begin{aligned}
y_{k}   & =  x_{k}-\gamma_{k}U_{k}(Bx_{k}+b_{k}), \\
x_{k+1} & =  x_{k}+\lambda_{k}(P_{C}^{U_{k}^{-1}}(y_{k})+a_{k}-x_{k}),
\end{aligned}
\right.
\end{equation}
where $\{U_k\}$, $\{\gamma_{k}\}$, $\{\lambda_k\}$, $\{a_{k}\}$, and $\{b_{k}\}$ satisfy the same conditions as in Theorem \ref{main-theorem} or Theorem \ref{main-theorem2}.

Then the following hold:

\noindent (i) For any $x^{*}\in \Omega$, $\lim_{k\rightarrow +\infty}\|x_{k}-x^{*}\|_{U_{k}^{-1}}$ exists;

\noindent (ii) $\lim_{k\rightarrow +\infty} \| x_{k} - P_{C}^{U_{k}^{-1}}(x_k - \gamma_k U_k Ax_k) \| =0 $;

\noindent (iii) $\{x_{k}\}$ converges weakly to a point in $\Omega$;

\noindent (iv) $Bx_k \rightarrow Bx^{*}$ as $k\rightarrow +\infty$, where $x^{*}\in \Omega$.

\end{theorem}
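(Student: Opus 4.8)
The plan is to recognize that this theorem is a direct specialization of Theorems \ref{main-theorem} and \ref{main-theorem2}, obtained by taking the maximal monotone operator $A$ to be the normal cone operator $N_C$. First I would record that the VIP (\ref{vip}) is equivalent to the monotone inclusion (\ref{vip-monotone}): a point $x^*\in C$ solves (\ref{vip}) if and only if $-Bx^*\in N_C(x^*)$, i.e. $0\in Bx^*+N_C(x^*)$, so the two problems share the same solution set $\Omega$. Since $N_C=\partial\delta_C$ is the subdifferential of the proper, lower semicontinuous, convex function $\delta_C$, it is maximal monotone, and $B$ is $\beta$-cocoercive by hypothesis; hence the pair $(N_C,B)$ meets the standing assumptions of Theorems \ref{main-theorem} and \ref{main-theorem2}.

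Next I would identify the backward step. For any $\gamma>0$ and any $U\in\mathcal{P}_{\alpha}(H)$ the resolvent $J_{\gamma U N_C}$ coincides with the projector $P_C^{U^{-1}}$ onto $C$ relative to the norm $\|\cdot\|_{U^{-1}}$: if $y=J_{\gamma U N_C}(x)$ then $U^{-1}(x-y)\in\gamma N_C(y)=N_C(y)$, using that $N_C(y)$ is a cone, which is exactly the variational characterization $\langle x-y,\,z-y\rangle_{U^{-1}}\le 0$ for all $z\in C$ of $y=P_C^{U^{-1}}(x)$. Consequently, iteration (\ref{vip-iterative-algorithm}) is literally (\ref{variable-forward-backward2}) with $A=N_C$ and the given cocoercive $B$, and $J_{\gamma_k U_k N_C}(\cdot)=P_C^{U_k^{-1}}(\cdot)$.

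Finally, since the hypotheses on $\{U_k\}$, $\{\gamma_k\}$, $\{\lambda_k\}$, $\{a_k\}$, $\{b_k\}$ are inherited verbatim from Theorem \ref{main-theorem} (respectively Theorem \ref{main-theorem2}), I would simply invoke that theorem. Conclusion (i) transfers unchanged; conclusions (ii) and (iii) follow after rewriting $J_{\gamma_k U_k N_C}$ as $P_C^{U_k^{-1}}$ in the respective statements; and conclusion (iv) is the assertion $Bx_k\to Bx^*$ from part (iv) of the cited theorems.

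The argument is essentially bookkeeping, so I do not expect a genuine obstacle. The only point that requires a little care is the identification $J_{\gamma U N_C}=P_C^{U^{-1}}$ in the variable-metric setting: one must check that the scalings by $\gamma$ and by $U$ interact correctly with the cone property of $N_C$ and that the projection is taken in the $\|\cdot\|_{U^{-1}}$ geometry rather than the original one. Everything else reduces to a direct appeal to the already-established convergence results.
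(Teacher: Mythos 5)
Your proposal is correct and follows exactly the route the paper takes: identify $A=N_C=\partial\delta_C$ as the maximal monotone operator in the inclusion $0\in Bx+N_Cx$, observe that $J_{\gamma_k U_k N_C}=P_C^{U_k^{-1}}$, and invoke Theorems \ref{main-theorem} and \ref{main-theorem2}. Your explicit verification of the resolvent-projection identity via the cone property of $N_C$ is a small bonus of care that the paper merely asserts.
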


\subsection{Application to constrained convex minimization problem}

Consider the following constrained convex minimization problem:
\begin{equation}\label{constrained-optim}
\begin{aligned}
\min\ & f(x) \\
s.t. \ & x\in C,
\end{aligned}
\end{equation}
where $C$ is a nonempty closed convex subset of $H$, and $f:H\rightarrow R$ is a proper closed convex differentiable function with a Lipschitz continuous gradient.

It follows from the definition of the indicator function that constrained convex minimization problem (\ref{constrained-optim}) is equivalent to the following unconstrained minimization problem:
\begin{equation}\label{unconstrained-optim}
\min_{x\in H}\ f(x) + \delta_{C}(x).
\end{equation}
It is obvious that problem (\ref{unconstrained-optim}) is a special case of (\ref{sum-two-convex}). Therefore, by taking $g(x) = \delta_{C}(x)$, we obtain the following convergence theorem for solving constrained convex minimization problem (\ref{constrained-optim}).

\begin{theorem}
Let $H$ be a real Hilbert space. Let $f:H\rightarrow R$ be a proper, closed convex function such that $f$ is differentiable with an $L$-Lipschitz continuous gradient. We denote by $\Omega$ the solution set of the constrained convex minimization problem (\ref{vip}) and assume that $\Omega \neq \emptyset$.  Let $x_{0}\in H$, and set
\begin{equation}\label{constrained-iterative-algorithm}
\left\{
\begin{aligned}
y_{k}   & =  x_{k}-\gamma_{k}U_{k}(\nabla f (x_{k})+b_{k}), \\
x_{k+1} & =  x_{k}+\lambda_{k}(P_{C}^{U_{k}^{-1}}(y_{k})+a_{k}-x_{k}),
\end{aligned}
\right.
\end{equation}
where $\{U_k\}$, $\{\gamma_{k}\}$, $\{\lambda_k\}$, $\{a_{k}\}$, and $\{b_{k}\}$ satisfy the same conditions as in Theorem \ref{main-theorem} or Theorem \ref{main-theorem2}.

Then the following hold:

\noindent (i) For any $x^{*}\in \Omega$, $\lim_{k\rightarrow +\infty}\|x_{k}-x^{*}\|_{U_{k}^{-1}}$ exists;

\noindent (ii) $\lim_{k\rightarrow +\infty} \| x_{k} - P_{C}^{U_{k}^{-1}}(x_k - \gamma_k U_k \nabla f(x_k)) \| =0 $;

\noindent (iii) $\{x_{k}\}$ converges weakly to a point in $\Omega$;

\noindent (iv) $\nabla f(x_k) \rightarrow \nabla f(x^{*})$ as $k\rightarrow +\infty$, where $x^{*}\in \Omega$.

\end{theorem}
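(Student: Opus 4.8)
The plan is to reduce this to Corollary \ref{coro2} by an exact-penalty reformulation using the indicator function. First I would observe that, because $\delta_{C}$ takes the value $0$ on $C$ and $+\infty$ off $C$, a point $x^{*}$ solves the constrained problem (\ref{constrained-optim}) if and only if it minimizes $f + \delta_{C}$ over all of $H$; hence $\Omega$ coincides with the solution set of the unconstrained problem (\ref{unconstrained-optim}), which is of the form (\ref{sum-two-convex}) with $g = \delta_{C}$. Since $C$ is nonempty, closed, and convex, $\delta_{C}$ is proper, lower semicontinuous, and convex, i.e. $\delta_{C}\in\Gamma_{0}(H)$, so the hypotheses of Corollary \ref{coro2} on $g$ are met; and $f$ being differentiable with an $L$-Lipschitz continuous gradient is exactly the hypothesis of Corollary \ref{coro2} with $\beta = 1/L$, so $\nabla f$ is $\beta$-cocoercive by the Baillon--Haddad theorem.

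Next I would identify the iteration. For any $U\in\mathcal{P}_{\alpha}(H)$ and any $\gamma>0$,
$$
prox_{\gamma\delta_{C}}^{U^{-1}}(x) = \arg\min_{y\in H}\Big(\tfrac{1}{2}\|x-y\|_{U^{-1}}^{2} + \gamma\delta_{C}(y)\Big) = \arg\min_{y\in C}\|x-y\|_{U^{-1}} = P_{C}^{U^{-1}}(x),
$$
so the proximal step of (\ref{forward-backward-convex-optimization}) with $g=\delta_{C}$ is precisely the projection step of (\ref{constrained-iterative-algorithm}). Therefore the iterative sequence $\{x_{k}\}$ produced by (\ref{constrained-iterative-algorithm}) is literally the sequence (\ref{forward-backward-convex-optimization}) for the data $g=\delta_{C}$, $f$, and the same $\{U_{k}\}$, $\{\gamma_{k}\}$, $\{\lambda_{k}\}$, $\{a_{k}\}$, $\{b_{k}\}$ (which by assumption satisfy the conditions of Theorem \ref{main-theorem} or Theorem \ref{main-theorem2}).

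Finally I would invoke Corollary \ref{coro2} directly: conclusion (i) gives the existence of $\lim_{k\to+\infty}\|x_{k}-x^{*}\|_{U_{k}^{-1}}$ for every $x^{*}\in\Omega$; conclusion (ii), after substituting $prox_{\gamma_{k}\delta_{C}}^{U_{k}^{-1}} = P_{C}^{U_{k}^{-1}}$, gives $\lim_{k\to+\infty}\|x_{k}-P_{C}^{U_{k}^{-1}}(x_{k}-\gamma_{k}U_{k}\nabla f(x_{k}))\|=0$; conclusion (iii) gives weak convergence of $\{x_{k}\}$ to a point of $\Omega$; and conclusion (iv) gives $\nabla f(x_{k})\to\nabla f(x^{*})$. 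Since each is a verbatim instance of the corresponding statement in Corollary \ref{coro2}, the proof is essentially bookkeeping. There is no real obstacle; the only point requiring a line of justification is the identity $prox_{\gamma\delta_{C}}^{U^{-1}} = P_{C}^{U^{-1}}$ and the equivalence of (\ref{constrained-optim}) with (\ref{unconstrained-optim}), both of which are immediate from the definitions.
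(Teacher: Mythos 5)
Your proposal is correct and follows exactly the paper's own route: the paper likewise observes that (\ref{constrained-optim}) is equivalent to minimizing $f+\delta_{C}$, notes that $prox_{\gamma\delta_{C}}^{U^{-1}}=P_{C}^{U^{-1}}$, and then reads the theorem off as the special case $g=\delta_{C}$ of Corollary \ref{coro2}. Nothing further is needed.
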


\subsection{Application to split feasibility problem}

Consider the split feasibility problem (SFP) as follows:
\begin{equation}\label{sfp}
\textrm{find } x\in C, \quad \textrm{such that } Lx \in Q,
\end{equation}
where $C$ and $Q$ are nonempty, closed convex subsets of Hilbert spaces $H$ and $G$, respectively. $L:H \rightarrow G$ is a bounded linear operator. SFP (\ref{sfp}) was first introduced by Censor and Elfving \cite{censorandelfving1994} in a finite dimensional Hilbert space and has since been extensively studied by many authors, see, for example \cite{xuhk2006,xuhk2010} and references therein.

SFP (\ref{sfp}) is closely related to constrained convex minimization problem (\ref{constrained-optim}). More precisely, the corresponding constrained convex minimization problem of SFP (\ref{sfp}) is,
\begin{equation}\label{unconstrained-sfp}
\begin{aligned}
\min_{x}\ & \frac{1}{2}\|  x-P_{Q}(Lx) \|^2 \\
s.t. \ & x\in C.
\end{aligned}
\end{equation}
Let $x^*$ be a solution of SFP (\ref{sfp}), then $x^*$ is a solution of (\ref{unconstrained-sfp}). Conversely, let $x^*$ be a solution of (\ref{unconstrained-sfp}) and $f(x):=\frac{1}{2}\|  x-P_{Q}(Lx) \|^2 = 0$, then $x^*$ is a solution of SFP (\ref{sfp}). Under the assumption that the solution set of SFP (\ref{sfp}) is nonempty, SFP (\ref{sfp}) and constrained convex minimization problem (\ref{unconstrained-sfp}) are equivalent.

The function $f(x) = \frac{1}{2}\|  x-P_{Q}(Lx) \|^2$ is convex differentiable and the gradient operator $\nabla f(x) = L^{*}(Lx - P_{Q}(Lx))$ is $\frac{1}{\|L\|^2}$-cocoercive. Therefore, we obtain the following theorem for solving SFP (\ref{sfp}).

\begin{theorem}
Let $H$ and $G$ be real Hilbert spaces. Let $L:H\rightarrow G$ be a bounded linear operator. Let $C$ and $Q$ be nonempty closed and convex subsets of $H$ and $G$, respectively. We denote by $\Omega$ the solution set of SFP (\ref{sfp}) and assume that $\Omega \neq \emptyset$.  Let $x_0\in H$, and set
\begin{equation}\label{sfp-iterative-algorithm}
\left\{
\begin{aligned}
y_{k}   & =  x_{k}-\gamma_{k}U_{k}(L^{*}(Lx_k - P_{Q}(Lx_{k}))+b_{k}), \\
x_{k+1} & =  x_{k}+\lambda_{k}(P_{C}^{U_{k}^{-1}}(y_{k})+a_{k}-x_{k}),
\end{aligned}
\right.
\end{equation}
where $\{U_k\}$, $\{\gamma_{k}\}$, $\{\lambda_k\}$, $\{a_{k}\}$, and $\{b_{k}\}$ satisfy the same conditions as in Theorem \ref{main-theorem} or Theorem \ref{main-theorem2}.

Then the following hold:

\noindent (i) For any $x^{*}\in \Omega$, $\lim_{k\rightarrow +\infty}\|x_{k}-x^{*}\|_{U_{k}^{-1}}$ exists;

\noindent (ii) $\lim_{k\rightarrow +\infty} \| x_{k} - P_{C}^{U_{k}^{-1}}(x_k - \gamma_k U_k L^{*}(Lx_k - P_{Q}(Lx_{k}))) \| =0 $;

\noindent (iii) $\{x_{k}\}$ converges weakly to a point in $\Omega$;

\noindent (iv) $L^{*}(Lx_k - P_{Q}(Lx_{k})) \rightarrow L^{*}(Lx^{*} - P_{Q}(Lx^{*}))$ as $k\rightarrow +\infty$, where $x^{*}\in \Omega$.

\end{theorem}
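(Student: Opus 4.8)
The plan is to obtain all four assertions as a direct specialization of the convex-minimization corollary (Corollary \ref{coro2}), using the identification of SFP (\ref{sfp}) with an instance of problem (\ref{sum-two-convex}) already indicated in the text. First I would recall that, under the standing hypothesis $\Omega\neq\emptyset$, SFP (\ref{sfp}) is equivalent to the constrained problem (\ref{unconstrained-sfp}), hence to the unconstrained problem $\min_{x\in H} f(x)+\delta_{C}(x)$ with $f(x)=\tfrac12\|Lx-P_{Q}(Lx)\|^{2}$ and $g=\delta_{C}$; consequently $\Omega=\mathrm{zer}(\partial\delta_{C}+\nabla f)=\mathrm{zer}(N_{C}+\nabla f)$ coincides with the solution set handled by Corollary \ref{coro2}, and the assumption $\Omega\neq\emptyset$ transfers directly.

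Next I would check that the data $(f,g)=(f,\delta_{C})$ meet the hypotheses of Corollary \ref{coro2} and that the recursion (\ref{sfp-iterative-algorithm}) is literally (\ref{forward-backward-convex-optimization}) for these data. Since $\partial\delta_{C}=N_{C}$, for every $\gamma_{k}>0$ and $U_{k}\in\mathcal P_{\alpha}(H)$ one has $prox_{\gamma_{k}\delta_{C}}^{U_{k}^{-1}}=J_{\gamma_{k}U_{k}\partial\delta_{C}}=P_{C}^{U_{k}^{-1}}$, i.e. the resolvent of the normal cone in the metric $\|\cdot\|_{U_{k}^{-1}}$ is exactly the projection onto $C$ in that metric; and $\nabla f(x)=L^{*}(Lx-P_{Q}(Lx))$ by the standard differentiability of $x\mapsto\tfrac12\|(I-P_{Q})Lx\|^{2}$. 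Thus (\ref{sfp-iterative-algorithm}) is (\ref{forward-backward-convex-optimization}), and once the cocoercivity constant of $\nabla f$ is pinned down, conclusions (i)--(iv) follow verbatim from Corollary \ref{coro2}, with (iv) reading $\nabla f(x_{k})\to\nabla f(x^{*})$, that is $L^{*}(Lx_{k}-P_{Q}(Lx_{k}))\to L^{*}(Lx^{*}-P_{Q}(Lx^{*}))$.

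The only point that genuinely needs verification is that $\nabla f$ is $\tfrac{1}{\|L\|^{2}}$-cocoercive, which is what fixes $\beta=1/\|L\|^{2}$ in the parameter ranges imported from Theorem \ref{main-theorem} or Theorem \ref{main-theorem2} (and hence into Corollary \ref{coro2}). I would argue this as follows: $P_{Q}$ is firmly nonexpansive, so by Proposition \ref{prop1}(ii) $I-P_{Q}$ is firmly nonexpansive, hence $1$-cocoercive; then for all $x,y\in H$,
\begin{align*}
\langle \nabla f(x)-\nabla f(y),\, x-y\rangle
&= \langle (I-P_{Q})Lx-(I-P_{Q})Ly,\, Lx-Ly\rangle\\
&\geq \|(I-P_{Q})Lx-(I-P_{Q})Ly\|^{2}\\
&\geq \tfrac{1}{\|L\|^{2}}\,\|L^{*}\big((I-P_{Q})Lx-(I-P_{Q})Ly\big)\|^{2}
= \tfrac{1}{\|L\|^{2}}\,\|\nabla f(x)-\nabla f(y)\|^{2},
\end{align*}
the last inequality using $\|L^{*}u\|\leq\|L\|\,\|u\|$ (the case $L=0$ being trivial since then $f$ is constant). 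Beyond this short computation the proof is pure bookkeeping: substituting $\beta=1/\|L\|^{2}$ into the conditions on $\{\gamma_{k}\}$, $\{\lambda_{k}\}$, $\{U_{k}\}$, $\{a_{k}\}$, $\{b_{k}\}$ of Theorem \ref{main-theorem}/Theorem \ref{main-theorem2} and invoking Corollary \ref{coro2}. I therefore expect no real obstacle here; the main thing to state carefully is the cocoercivity of $\nabla f$ and the resulting value of $\beta$.
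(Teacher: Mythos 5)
Your proposal is correct and follows essentially the same route as the paper, which likewise reduces SFP (\ref{sfp}) (under $\Omega\neq\emptyset$) to minimizing $f+\delta_{C}$ with $f(x)=\tfrac12\|Lx-P_{Q}(Lx)\|^{2}$, identifies $prox^{U_{k}^{-1}}_{\gamma_{k}\delta_{C}}=P_{C}^{U_{k}^{-1}}$ and $\nabla f(x)=L^{*}(Lx-P_{Q}(Lx))$, and invokes the earlier convergence results with $\beta=1/\|L\|^{2}$. Your explicit verification of the $\tfrac{1}{\|L\|^{2}}$-cocoercivity of $\nabla f$ via the firm nonexpansiveness of $I-P_{Q}$ supplies a detail the paper only asserts, so the write-up is, if anything, more complete than the original.
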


\begin{remark}
To the best of our knowledge, the proposed iterative algorithms (\ref{vip-iterative-algorithm}), (\ref{constrained-iterative-algorithm}), and (\ref{sfp-iterative-algorithm}) are the most general ones for solving variational inequality problem (\ref{vip}), constrained convex minimization problem (\ref{constrained-optim}), and split feasibility problem (\ref{sfp}), respectively. Most of the existing algorithms  \cite{Byrne2004,yangandzhao2006,xuhk2010,xuhk2011,Wangfh2014JAM} are special cases of ours.

\end{remark}

\section{Numerical experiments}

In this section, we apply the proposed iterative algorithm (\ref{forward-backward-convex-optimization}) to solve the famous LASSO problem \cite{Tibshirani1996}. All the experiments are performed on a standard Lenovo Laptop with Intel (R) Core (TM) i7-4712MQ 2.3 GHZ CPU and 4 GB RAM. We run the program with MATLAB 2014a.

Let's recall the LASSO problem:
\begin{equation}\label{lasso}
\begin{aligned}
\min_{x\in R^{n}}\, & \frac{1}{2}\|Ax-b\|_{2}^{2} \\
s.t.\, & \|x\|_1 \leq t,
\end{aligned}
\end{equation}
where $A\in R^{m\times n}$, $b\in R^{m}$ and $t>0$. Define $C:=\{ x | \|x\|_1 \leq t \}$, by using the indicator function, we see that (\ref{lasso}) is equivalent to the following unconstrained optimization problem
\begin{equation}\label{lasso-unconstrained}
\min_{x}\, \frac{1}{2}\|Ax-b\|_{2}^{2} + \delta_{C}(x),
\end{equation}
which is a special case of the general optimization problem (\ref{sum-two-convex}). Let $f(x) = \frac{1}{2}\|Ax-b\|_{2}^{2}$ and $g(x) = \delta_{C}(x)$, then we can apply iterative algorithm (\ref{forward-backward-convex-optimization}) to solve (\ref{lasso-unconstrained}). Notice that the gradient of $f(x)$ is $\nabla f(x) = A^{T}(Ax-b)$ and the Lipschitz constant of $\nabla f$ is $L:=\|A\|^2$. Besides, the proximity operator of indicator function $\delta_{C}(x)$ is the orthogonal projection onto the closed convex set $C$. Although it has no closed-form solution, it can be calculated in a polynomial time.

In the tests, the true signal $x\in R^{n}$ has $k$ non-zero elements, which is generated from uniform distribution in the interval $[-2,2]$. The system matrix $A\in R^{m\times n}$ is generated from standard Gaussian distribution. The observed signal $b$ is given by $b=Ax$. In the experiment, we set $m=240$, $n=1024$ and $k=40$.  The stopping criterion is defined as,
\begin{equation}
\frac{\|x_{k+1}-x_k\|_2}{\|x_k\|_2} \leq \varepsilon,
\end{equation}
where $\varepsilon >0$ is a small constant. We test the performance of the proposed iterative algorithm with different choices of the step size $\gamma_k$ and the relaxation parameter $\lambda_k$. For simplicity, we set them as constant during the iteration process. According to Corollary \ref{coro2}, we know that $\gamma_k \in (0,\frac{2}{L})$ and $\lambda_k \in (0,\frac{4-\gamma_k L}{2})$. The obtained numerical results are listed in Table \ref{results}, in which we report the number of iterations (``$Iter$"), the objective function value (``$Obj$") and the error between the recovered signal and the true signal (``$Err$"). We can see from Table \ref{results} that when the step size $\gamma_k$ is fixed, a large relaxation parameter $\lambda_k$ leads to a faster convergence. At the same time, the larger the step size, the faster the algorithm converges.

\begin{table}[h]
 \scriptsize
\centering
\caption{Numerical results for different choices of $\gamma_k$ and $\lambda_k$ for solving the LASSO problem (\ref{lasso})}
\begin{tabular}{c|c|ccccccc}
\hline
\multirow{2}[1]{*}{$\gamma_k$} & \multirow{2}[1]{*}{$\lambda_k$} &  \multicolumn{3}{c}{$\varepsilon = 10^{-6}$} &  & \multicolumn{3}{c}{$\varepsilon = 10^{-8}$}  \\ \cline{3-5} \cline{7-9}
 &  &   $Iter$ & $Err$ & $Obj$ & & $Iter$  & $Err$ & $Obj$ \\
\hline
\hline
\multirow{8}[1]{*}{$\frac{1}{2L}$} & $0.2$  &  $16553$ & $0.0246$ & $0.0020$ & &  $32336$ & $2.4764e-4$ & $1.9778e-7$   \\
 &  $0.4$  &   $9457$ & $0.0123$ & $4.9063e-4$ & &  $17357$ & $1.2383e-4$ & $4.9432e-8$   \\
&  $0.6$  &   $6765$ & $0.0082$ & $2.1851e-4$ & &  $12035$ & $8.2498e-5$ & $2.1935e-8$   \\
&  $0.8$  &   $5319$ & $0.0062$ & $1.2296e-4$ & &  $9272$ & $6.1880e-5$ & $1.2339e-8$  \\
&  $1$  &   $4408$ & $0.0049$ & $7.8535e-5$ & &  $7570$ & $4.9492e-5$ & $7.8918e-9$   \\
&  $1.2$  &   $3777$ & $0.0041$ & $5.4500e-5$ & &  $6412$ & $4.1228e-5$ & $5.4756e-9$ \\
&  $1.5$  &   $3123$ & $0.0033$ & $3.4835e-5$ & &  $5231$ & $3.2952e-5$ & $3.4975e-9$  \\
&  $1.75$  &   $2736$ & $0.0028$ & $2.5671e-5$ & &  $4543$ & $2.8267e-5$ & $2.5735e-9$  \\
\hline
\multirow{7}[1]{*}{$\frac{1}{L}$} & $0.2$  &  $9455$ & $0.0123$ & $4.9106e-4$ & &  $17356$ & $1.2381e-4$ & $4.9417e-8$   \\
 &  $0.4$  &   $5319$ & $0.0062$ & $1.2280e-4$ & &  $9271$ & $6.1913e-5$ & $1.2352e-8$   \\
&  $0.6$  &   $3776$ & $0.0041$ & $5.4633e-5$ & &  $6412$ & $4.1206e-5$ & $5.4698e-9$   \\
&  $0.8$  &   $2955$ & $0.0031$ & $3.0621e-5$ & &  $4931$ & $3.0909e-5$ & $3.0772e-9$  \\
&  $1$  &   $2440$ & $0.0025$ & $1.9556e-5$ & &  $4021$ & $2.4670e-5$ & $1.9601e-9$   \\
&  $1.2$  &   $2085$ & $0.0020$ & $1.3549e-5$ & &  $3402$ & $2.0554e-5$ & $1.3605e-9$ \\
&  $1.5$  &   $1718$ & $0.0016$ & $8.6760e-6$ & &  $2771$ & $1.6469e-5$ & $8.7329e-10$  \\
\hline
\multirow{6}[1]{*}{$\frac{1.9}{L}$} & $0.2$  &  $5550$ & $0.0065$ & $1.3624e-4$ & &  $9711$ & $6.5144e-5$ & $1.3675e-8$   \\
 &  $0.4$  &   $3086$ & $0.0032$ & $3.4008e-5$ & &  $5167$ & $3.2508e-5$ & $3.4038e-9$   \\
&  $0.6$  &   $2178$ & $0.0022$ & $1.5100e-5$ & &  $3565$ & $2.1657e-5$ & $1.5104e-9$   \\
&  $0.8$  &   $1698$ & $0.0016$ & $8.4390e-6$ & &  $2737$ & $1.6253e-5$ & $8.5059e-10$  \\
&  $1$  &   $1398$ & $0.0013$ & $5.3842e-6$ & &  $2229$ & $1.2973e-5$ & $5.4181e-10$   \\
&  $1.05$  &   $1340$ & $0.0012$ & $4.8598e-6$ & &  $2131$ & $1.2350e-5$ & $4.9099e-10$ \\
\hline
\end{tabular}\label{results}

\end{table}

In order to more visualize the effect of iterative parameters on the value of the function, Figure \ref{objective-function-value} shows the objective function value against the number of iterations. Further, we plot the true signal and the recovered signal in Figure \ref{recovered-signal} for the parameters of $\gamma_k = \frac{1.9}{L}$, $\lambda_k = 1.05$ and the stopping criterion $\varepsilon = 10^{-8}$. We can see from Figure \ref{recovered-signal} that the true signal is successfully reconstructed.

\begin{figure}[htbp]
     \setlength{\abovecaptionskip}{-5pt}
  \centering \scalebox{0.7}{\includegraphics{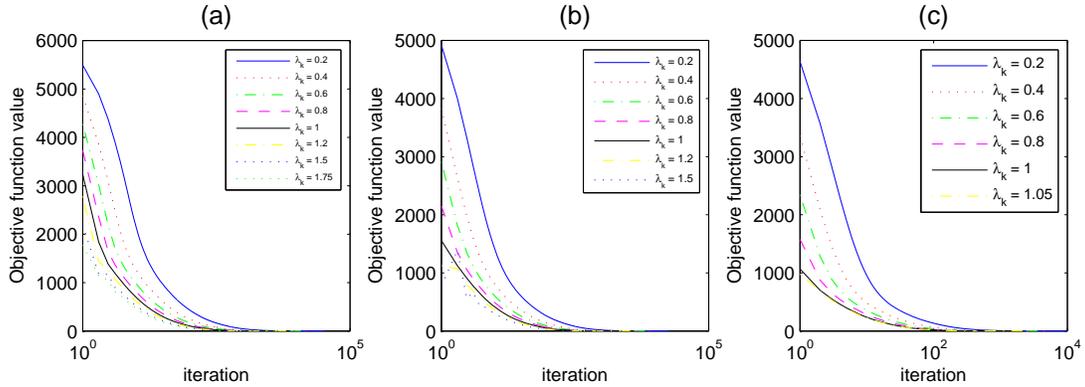}}
  \caption[]{The objective function value against the number of iterations for the LASSO problem. (a) $\gamma_k = \frac{1}{2L}$, (b) $\gamma_k = \frac{1}{L}$ and (c) $\gamma_k = \frac{1.9}{L}$.}%
{\label{objective-function-value}}
\end{figure}

\begin{figure}[htbp]
     \setlength{\abovecaptionskip}{-5pt}
  \centering \scalebox{0.65}{\includegraphics{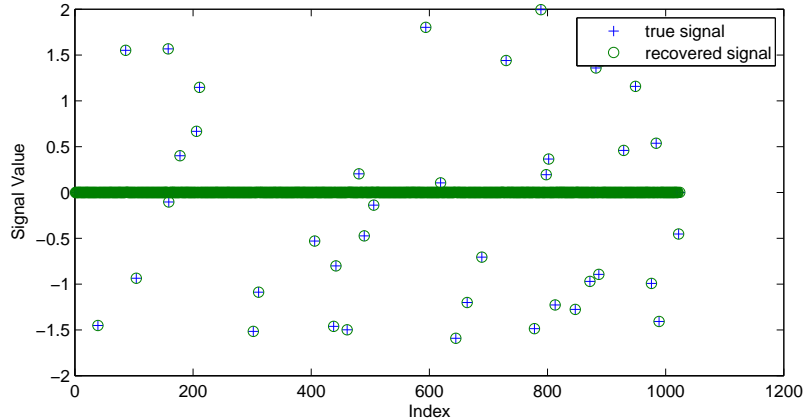}}
  \caption[]{The recovered sparse signal versus the true $k$-sparse signal.}%
{\label{recovered-signal}}
\end{figure}

\section{Conclusions}

In this paper, we proposed a new convergence analysis of the variable metric forward-backward splitting algorithm (\ref{variable-forward-backward}) with extended relaxation parameters. Based on the averaged operator $J_{\gamma_k U_k A}(I-\gamma_k U_k B)$ and the firmly nonexpansive $J_{\gamma_k U_k A}$ on the Hilbert spaces $H_{U_{k}^{-1}}$, we proved the weak convergence of this algorithm. Compared to existing work, we imposed a slightly weak condition on the relaxation parameters to ensure the convergence of the forward-backward splitting algorithm when using the variable metric and variable step sizes. Our
results complemented and extended the corresponding results of Combettes and Yamada \cite{Combettes2015JMAA}. Furthermore, we obtained several general iterative algorithms for solving the variational inequality problem, the constrained convex minimization problem, and the split feasibility problem, respectively. These results generalized and improved the known results in the literature. Numerical experimental results on LASSO problem showed that the step size $\gamma_k$ and relaxation parameter $\lambda_k$ had much impact on the convergence speed of the proposed iterative algorithm. The larger the step size, the faster the algorithm converged. The over-relaxation parameter $\lambda_k$ ($\lambda_k >1$) performed better than the under-relaxation parameter $\lambda_k$ ($\lambda_k \leq 1$).

\section*{Acknowledgement}
This work was supported by the National Natural Science Foundations
of China (11661056, 11771198, 11401293), the Postdoctoral Research Foundation of China (2015M571989)
and the Postdoctoral Science Foundation of Jiangxi Province (2015KY51).

\section*{Conflict of interest}
The authors declare no conflict of interest.


\end{document}